\DeclareMathAlphabet{\euls}{U}{eus}{m}{n}
\numberwithin{equation}{section}
 \theoremstyle{plain}
\newtheorem{theorem}[equation]{Theorem}
\newtheorem{lemma}[equation]{Lemma}
\newtheorem{sublemma}[equation]{Sublemma}
\newtheorem{proposition}[equation]{Proposition}
\newtheorem{corollary}[equation]{Corollary}
\theoremstyle{definition}
\newtheorem{definition}[equation]{Definition}
\newtheorem{remark}[equation]{Remark}
\newtheorem{remarks}[equation]{Remarks}
\newtheorem{ex}[equation]{Example}
\newtheorem{conjecture}[equation]{Conjecture}
\newtheorem{question}[equation]{Question}
\newcommand{\udim}{\operatorname{udim}}
\newcommand{\socle}{\operatorname{socle}}
\newcommand{\Kdim}{\operatorname{Kdim}}
\newcommand{\cchar}{\operatorname{char}}
\newcommand{\Cent}{\operatorname{Cent}}
\newcommand{\GKdim}{\operatorname{GKdim}}
\begin{document}
\title[ the Dixmier-Moeglin Equivalence]{Pointed Hopf algebras, the Dixmier-Moeglin Equivalence and noetherian Group Algebras}

\author[J. P. Bell]{Jason P. Bell} 
\address{ Department of Pure Mathematics\\
University of Waterloo\\
Waterloo, ON CANADA N2L 3G1}
\email{jpbell@uwaterloo.ca}

\author[K. A. Brown]{Ken A. Brown}
\address{School of Mathematics and Statistics\\
University of Glasgow\\ Glasgow G12 8QW\\
Scotland}
\email{ken.brown@glasgow.ac.uk}

\thanks{The work of K.A.B. was supported by Leverhulme Emeritus Fellowship EM 2017-081.\\
 \indent  The work of J.T.S was supported by Leverhulme Emeritus Fellowship EM-2019-015.\\
 \indent The work of J.P.B was supported by NSERC grant RGPIN RGPIN-2022-02951}

\author[J.\ T.\ Stafford]{J. Toby Stafford}
\address{Department of Mathematics, The University of Manchester, Manchester M13 9PL, England}
\email{Toby.Stafford@manchester.ac.uk}

\subjclass[2020]{Primary 16D60,  20C07,   43A07, Secondary  16P40,   16S85, 16T05.}

\date{\today}
 
\begin{abstract}This paper addresses the interactions between three properties that a group algebra or more generally a pointed Hopf algebra 
may possess:
 being noetherian, having finite Gelfand-Kirillov dimension, and satisfying the Dixmier-Moeglin equivalence. First it is shown that the
 second and third of these properties 
are equivalent for group algebras $kG$ of polycyclic--by-finite groups, and are, in turn, equivalent to $G$ being nilpotent-by-finite. 
In characteristic $0$, this enables us to extend this equivalence to certain cocommutative Hopf 
algebras. 

 In sections 3 and 4 of the paper finiteness conditions for group algebras are studied. Thus in $\S$3 we examine when a group algebra satisfies the Goldie conditions, while in the final section we discuss what can be said about a minimal counterexample to the conjecture that if $kG$ is noetherian then $G$ is polycyclic-by-finite. 
 \end{abstract}
 \maketitle

\section{Introduction}\label{intro} 
This paper concerns three separate but connected topics.

First, in $\S$\ref{group} we explore two aspects of the following conjecture which was proposed (for the case $k = \mathbb{C}$) as 
\cite[Conjecture~5.5]{BS}. 
For details of the terminology and notation, see $\S$\ref{notation} and Definition~\ref{DMEdefn}.

\begin{conjecture}\label{wild} Let $k$ be an algebraically closed field of characteristic zero and let $H$ be a noetherian pointed Hopf $k$-algebra. Then the following are equivalent:
\begin{enumerate}
\item[(1)] The Gelfand-Kirillov dimension $\GKdim H$ is finite. 
\item[(2)] $H$ satisfies the Dixmier-Moeglin Equivalence (DME).
\item[(3)] The group $G(H)$ of group-likes of $H$ is nilpotent-by-finite.
\end{enumerate}
\end{conjecture}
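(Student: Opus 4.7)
The plan is to establish Conjecture \ref{wild} via the three implications $(3)\Rightarrow(1)\Rightarrow(2)\Rightarrow(3)$, treating first the special case $H = kG$ of a group algebra (where $G(H) = G$) and then lifting to general pointed Hopf algebras using the coradical filtration and, in characteristic zero, the Cartier-Gabriel-Kostant structure theorem for cocommutative pointed Hopf algebras.

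For $H = kG$ with $G$ polycyclic-by-finite, the equivalence $(1)\Leftrightarrow(3)$ should follow from Gromov's polynomial growth theorem together with the identification of $\GKdim kG$ with the growth degree of a finitely generated group: $\GKdim kG < \infty$ iff $G$ has polynomial growth iff $G$ is nilpotent-by-finite. For $(3)\Rightarrow(2)$ I would pass to a torsion-free nilpotent normal subgroup $N$ of finite index in $G$, present $kN$ as an iterated Ore extension, and verify DME directly, using the Roseblade-Zalesskii style description of primitive ideals of $kN$ as annihilators of orbit representations coming from central characters; DME should then descend from $kN$ to $kG$ via the finite extension together with standard lifting of the Nullstellensatz and of rationality. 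The converse $(2)\Rightarrow(3)$ for polycyclic-by-finite $G$ would use that exponential growth forces uncountably many rational non-primitive primes in $kG$, violating DME.

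To pass to a general affine noetherian pointed Hopf algebra $H$ in characteristic zero, the Hopf inclusion $k[G(H)]\hookrightarrow H$ gives $\GKdim k[G(H)]\le \GKdim H$, and once $G(H)$ is known to be finitely generated (by a noetherianity argument applied to the coradical), the group-algebra case yields $(1)\Rightarrow(3)$. For cocommutative $H$ the Cartier-Gabriel-Kostant theorem gives $H\cong U(L)\,\#\,k[G(H)]$ with $L$ finite-dimensional by affine noetherianity; classical DME and GKdim results for enveloping algebras, combined with the group-algebra case above, should then upgrade $(3)$ to both $(1)$ and $(2)$.

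The principal obstacle is the direction $(3)\Rightarrow(1)$ in the non-cocommutative, non-commutative setting, where no structure theorem analogous to Cartier-Gabriel-Kostant is available: in principle, genuinely quantum contributions (as arise from quantum tori or bosonizations of Nichols algebras) could force $\GKdim H = \infty$ even when $G(H)$ is nilpotent-by-finite, so one would need either a new structural decomposition of $H$ or a direct growth estimate for the non-cocommutative part. A secondary obstacle is the implication $(2)\Rightarrow(1)$ beyond group algebras, since producing a witness rational non-primitive prime in a general noncommutative, noncocommutative Hopf setting appears to require tools not presently available; this is presumably why the conjecture, as the abstract indicates, is established here only for polycyclic-by-finite group algebras and, via the cocommutative structure theorem, for certain cocommutative Hopf algebras.
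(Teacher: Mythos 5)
The statement you are addressing is stated in the paper as a conjecture, and the paper does not prove it in general: it establishes only the special cases $H=kG$ with $G$ polycyclic-by-finite (Theorem~\ref{main}) and the cocommutative case with $G(H)$ polycyclic-by-finite and $\dim_k P(H)<\infty$ (Corollary~\ref{cocomcor}). You correctly identify this limitation, and your treatment of the cocommutative reduction (Cartier--Gabriel--Kostant, the Hopf surjection of $H$ onto $kG(H)$, Bell--Leung for $(1)\Rightarrow(2)$, and a growth estimate for $(3)\Rightarrow(1)$) matches the paper's Corollary~\ref{cocomcor} in outline. The implications $(1)\Leftrightarrow(3)$ and $(3)\Rightarrow(2)$ for group algebras are, as you say, the known results of Bass--Guivarc'h--Gromov and of Zalesskii and Lorenz.

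The genuine gap is in $(2)\Rightarrow(3)$ for $kG$ with $G$ polycyclic-by-finite and not nilpotent-by-finite, which is the one implication the paper actually proves anew. Your proposed mechanism --- that ``exponential growth forces uncountably many rational non-primitive primes'' --- is not a theorem and is not how the argument goes; indeed, when $k$ is not absolute the witness the paper produces is a prime that \emph{is} both primitive and rational (so conditions (A) and (B) of the DME hold) but fails to be locally closed, so hunting for rational non-primitive primes is the wrong target in that case. The paper's proof requires a specific construction: choose a normal finite-index subgroup $B_0$ whose hypercentre $N_0$ is maximal among all such pairs, show $\Delta(G/N_0)$ is finite, and pass to $F$ with $F/N_0=\Delta(G/N_0)$ so that $\Delta(G/F)=\{1\}$; then $k(G/F)$ is primitive (Roseblade, for non-absolute $k$) or merely prime with only finite-dimensional simple modules (for absolute $k$), and rational by Formanek's theorem, while a separate residual-finiteness argument (Lemma~\ref{caught}, which in characteristic $p$ needs Roseblade's $q$-nilpotent subgroup theorem to produce a torsion-free residually finite-$q$ subgroup) shows the corresponding augmentation ideal is an intersection of co-artinian maximal ideals and hence not locally closed. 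None of these ingredients --- the maximal-hypercentre trick, the reduction to trivial FC-subgroup, or the residually-simple-artinian lemma --- appears in your sketch, so the implication carrying the paper's main new content is unproved in your proposal.
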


Note that, for a pointed Hopf $k$-algebra $H$ (where $k$ is an arbitrary field and $H$ is not necessarily noetherian), $H$ is a faithfully flat $kG(H)$-module by \cite[Theorem~3.2]{Ta}, so that $H$ will be noetherian only if the coradical $kG(H)$ of $H$ is noetherian. There is thus an obvious issue with the conjecture - namely, it is not known which group algebras are noetherian. In the positive direction, Philip Hall \cite{H}, \cite[Corollary~10.2.8]{P} adapted the proof of the Hilbert Basis Theorem in 1954 to show that $kG$ is noetherian when $G$ is polycyclic-by-finite, and to date these remain the only known examples of noetherian group algebras. 

Hence our first objective here is to address the basic case of Conjecture~\ref{wild} where $H= kG$, the group algebra of a polycyclic-by-finite group $G$. 
For group algebras (of \emph{all} finitely generated groups) the implications $(1)\Longleftrightarrow (3)$ are known thanks to famous theorems of Bass, Guivarc'h and Gromov \cites{B, Gu, G}. Zalesskii \cite{Z} showed in 1971 that when $G$ is finitely generated and nilpotent-by-finite every primitive ideal of $kG$ is maximal, from which the DME follows easily, as was shown in \cite{L}; see also \cite[Theorem~5.3]{Be}. Therefore, for $H = kG$ with $G$ polycyclic-by-finite and with $k$ \emph{any} field, the implications $(1)\Leftrightarrow (3) \Rightarrow (2)$ are already known. But although Lorenz exhibited in \cite{L} a polycyclic group whose complex group algebra fails to satisfy the DME, it has remained unclear whether $(2) \Longrightarrow (3)$ for group algebras of polycyclic-by-finite groups. We rectify this omission in $\S$\ref{subsect2.2}, by proving:

\begin{theorem}\label{DMEthm} {(See Theorem~\ref{main}.)} Let $H$ be the group algebra of a polycyclic-by-finite group over a field $k$ (of any characteristic). Then Conjecture~\ref{wild} holds for $H$.
\end{theorem}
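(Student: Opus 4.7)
Since $(1)\Leftrightarrow(3)$ and $(3)\Rightarrow(2)$ are already known for any polycyclic-by-finite $G$, the remaining task is to prove $(2)\Rightarrow(3)$. My plan is to argue by contraposition: starting from a polycyclic-by-finite group $G$ that is \emph{not} nilpotent-by-finite, I will exhibit a prime ideal $P\lhd kG$ that is primitive but not locally closed in $\operatorname{Spec}(kG)$, violating the DME. The first step is a structural reduction: by the standard structure theory of polycyclic-by-finite groups (Wolf--Segal), if $G$ is not nilpotent-by-finite then $G$ admits a subquotient of the form $\overline{G}=\mathbb{Z}^n\rtimes\langle t\rangle$ in which $t$ acts on $\mathbb{Z}^n$ via an integral matrix $T$ having at least one eigenvalue that is not a root of unity; otherwise every abelian section would carry a virtually unipotent action and $G$ itself would be virtually nilpotent. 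Both primitivity and failure of local closedness of a prime pull back along the quotient map $kG\twoheadrightarrow k\overline{G}$, and standard lying-over/cutting-down transfers these properties along the finite module extension $kH\subseteq kG$ when $H\leq G$ has finite index, so it is enough to construct the desired prime in $k\overline{G}$.

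\smallskip
Setting $R=k[\mathbb{Z}^n]=k[x_1^{\pm 1},\ldots,x_n^{\pm 1}]$ and $\sigma\in\operatorname{Aut}_k(R)$ for the automorphism induced by $T$, we identify $k\overline{G}=R[t^{\pm 1};\sigma]$. Following the template of Lorenz's counterexample, I search for a prime ideal $P\lhd R[t^{\pm 1};\sigma]$ with two properties: (a) $k\overline{G}/P$ admits a faithful simple module, and (b) the intersection of the primes of $k\overline{G}$ strictly containing $P$ equals $P$. For (a) the standard primitivity recipe for skew Laurent rings applies: inducing from a maximal ideal $\germ m$ of $R$ whose $\sigma$-orbit satisfies $\bigcap_{i\in\mathbb{Z}}\sigma^i(\germ m)=P\cap R$ produces a simple $k\overline{G}/P$-module that is faithful. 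For (b) one must exhibit an infinite family of pairwise incomparable $\sigma$-stable Zariski-closed subvarieties of $\operatorname{Spec}(R/(P\cap R))$, each giving a distinct prime of $k\overline{G}$ strictly above $P$, with joint intersection that collapses back to $P$.

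\smallskip
The principal obstacle is the intersection condition in (b): producing many primes strictly above $P$ is comparatively routine, but forcing their intersection to be precisely $P$ is what separates the present case from the nilpotent-by-finite one. For virtually unipotent $T$, the $\sigma$-stable subvarieties of $\operatorname{Spec} R$ are too few to supply a trivial-intersection family, which is consistent with Zalesskii's theorem that primitive ideals of $kG$ are maximal (hence automatically locally closed) when $G$ is nilpotent-by-finite. When $T$ has a non-root-of-unity eigenvalue, the $\sigma$-dynamics on $\operatorname{Spec} R$ is rich enough to produce such a family, but controlling the intersection exactly is delicate and is likely to call on dynamical or Diophantine input such as orbit-density statements or $S$-unit techniques for automorphisms of algebraic tori. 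This is where I expect the main technical work of the proof to lie.
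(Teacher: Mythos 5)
Your identification of the remaining implication $(2)\Rightarrow(3)$ and the contrapositive strategy match the paper, but the proposal has genuine gaps at both ends. First, the reduction: the paper does \emph{not} pass to a subquotient $\mathbb{Z}^n\rtimes\langle t\rangle$ of a finite-index subgroup. Transferring ``primitive but not locally closed'' from $kH$ up to $kG$ for $H$ of finite index is not a routine lying-over statement (primitivity and local closedness behave subtly under finite crossed-product extensions), and you give no argument for it. The paper avoids this entirely by a maximality argument internal to $G$: among pairs $(N,B)$ with $B\triangleleft G$ of finite index and $N$ the terminus of the upper central series of $B$, choose $N_0$ maximal; then $G/N_0$ is not nilpotent-by-finite and $\Delta(G/N_0)$ is finite, so passing to the subgroup $F$ with $F/N_0=\Delta(G/N_0)$ gives a quotient $G/F$ of $G$ \emph{itself} with trivial FC-subgroup. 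The offending ideal is then simply $\mathfrak{f}kG$, where $\mathfrak{f}$ is the augmentation ideal of $kF$, and every relevant property pulls back along an honest quotient map.

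Second, the step you flag as the ``main technical work''---forcing the intersection of the primes strictly above $P$ to equal $P$---is exactly where the paper's argument is both essential and far simpler than your proposed dynamical/Diophantine route. Lemma~\ref{caught} shows that (under a mild semiprimeness hypothesis) $k(G/F)$ is residually simple artinian, i.e.\ its co-artinian maximal ideals intersect to zero; since $G/F$ is infinite, each such maximal ideal strictly contains $\mathfrak{f}kG$, so $\mathfrak{f}kG$ is not locally closed. The proof is residual finiteness plus Maschke's theorem in characteristic $0$, and Roseblade's $q$-nilpotent subgroup theorem in characteristic $p$---no orbit-density or $S$-unit input is needed. Finally, your strategy of exhibiting a \emph{primitive} non-locally-closed prime provably cannot work when $k$ is an absolute field: by \cite[Theorem~12.3.7]{P} every simple module over such a group algebra is finite dimensional, so every primitive ideal is a co-artinian maximal ideal and hence locally closed. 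Since the theorem is asserted for arbitrary $k$, this case must be handled, and the paper does so by producing instead a \emph{rational} ideal that is neither primitive nor locally closed (Theorem~\ref{main}(ii)); your proposal does not address it.
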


\medskip 

The implication $(1)\Longrightarrow (2)$ of Conjecture~\ref{wild} is proposed, without the pointed hypothesis but for $k = \mathbb{C}$, as \cite[Conjecture~1.3]{BL}, and the cocommutative case of this implication is obtained as \cite[Theorem~1.4]{BL}. Combining this result with Theorem~\ref{DMEthm} and the Cartier-Gabriel-Kostant structure theorem for cocommutative Hopf $k$-algebras in characteristic 0 \cite[\S\S5.6.4--5.6.5]{M}, we prove that Conjecture~\ref{wild} is true for all currently known cocommutative noetherian Hopf algebras: 

\begin{corollary}\label{cocomcor-intro} {\rm(}See Corollary~\ref{cocomcor}{\rm)} Let $k$ be an algebraically closed field of characteristic 0 and let $H$ be a cocommutative Hopf $k$-algebra. Assume that the group $G(H)$ of group-likes of $H$ is polycyclic-by-finite and that the Lie algebra $\mathfrak{t}$ of primitive elements of $H$ has finite dimension. Then Conjecture~\ref{wild} is true for $H$.
\end{corollary}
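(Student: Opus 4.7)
The plan is to apply the Cartier--Gabriel--Kostant (CGK) theorem to write $H$ as a smash product $U(\mathfrak{t}) \# kG$, where $G := G(H)$, and then to reduce each of the three equivalences of Conjecture~\ref{wild} to the corresponding statement for the group algebra $kG$, where Theorem~\ref{DMEthm} is available. Since $k$ is algebraically closed of characteristic $0$ and $H$ is cocommutative, CGK (as quoted from \cite[\S\S5.6.4--5.6.5]{M}) produces a Hopf algebra isomorphism $H \cong U(\mathfrak{t}) \# kG$, with $G$ acting on $\mathfrak{t}$ by the restriction of the adjoint action. Set $\mathfrak{I} := U(\mathfrak{t})^+ H$; because $U(\mathfrak{t})^+$ is $G$-stable, this equals $H \cdot U(\mathfrak{t})^+$, so $\mathfrak{I}$ is a Hopf ideal and the quotient map $\pi : H \twoheadrightarrow H/\mathfrak{I}$ identifies $H/\mathfrak{I}$ with $kG$.

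For $(1) \Leftrightarrow (3)$: since $kG$ is a subalgebra of $H$, $\GKdim kG \leq \GKdim H$, while the smash product description exhibits $H$ as generated by $U(\mathfrak{t})$ and $kG$, and standard growth estimates give $\GKdim H \leq \dim \mathfrak{t} + \GKdim kG$. As $\dim \mathfrak{t} < \infty$, finiteness of $\GKdim H$ is thus equivalent to finiteness of $\GKdim kG$, and by the Bass--Guivarc'h--Gromov theorem (applicable since $G$ is finitely generated) this is equivalent to $G$ being nilpotent-by-finite. The implication $(3) \Rightarrow (2)$ then follows at once: if $G$ is nilpotent-by-finite then $\GKdim H < \infty$, and \cite[Theorem~1.4]{BL}---the cocommutative case of $(1) \Rightarrow (2)$ cited in the introduction---supplies the DME for $H$.

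For $(2) \Rightarrow (3)$, the plan is to transfer the DME from $H$ down to $kG$ through $\pi$ and then to invoke Theorem~\ref{DMEthm}. The primes of $kG$ correspond bijectively, via $P' \mapsto \pi^{-1}(P')$, to the primes $P$ of $H$ containing $\mathfrak{I}$; writing $P' = P/\mathfrak{I}$, one checks three points. First, $H/P \cong kG/P'$, so rationality of $P$ in $H$ is equivalent to rationality of $P'$ in $kG$. Second, simple $H$-modules annihilated by $\mathfrak{I}$ are exactly the simple $kG$-modules pulled back through $\pi$, so primitivity of $P$ in $H$ is equivalent to primitivity of $P'$ in $kG$. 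Third, primes of $H$ strictly containing $P$ correspond bijectively to primes of $kG$ strictly containing $P'$, so local closedness also transfers. Hence DME for $H$ implies DME for $kG$, and Theorem~\ref{DMEthm}, applicable because $G$ is polycyclic-by-finite by hypothesis, forces $G$ to be nilpotent-by-finite.

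The main obstacle is the verification of these three transfer points, since everything else is either a direct appeal to Theorem~\ref{DMEthm} or a growth-rate computation after CGK. Once the smash-product decomposition is in place, however, the transfer is essentially bookkeeping, and no substantial new difficulty arises beyond those already handled in Theorem~\ref{DMEthm}.
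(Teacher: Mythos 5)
Your proposal is correct and follows essentially the same route as the paper: apply Cartier--Gabriel--Kostant to write $H \cong U(\mathfrak{t})\# kG(H)$, get $(1)\Rightarrow(2)$ from \cite[Theorem~1.4]{BL}, get $(2)\Rightarrow(3)$ by pushing the DME down the Hopf surjection $H\twoheadrightarrow kG(H)$ and invoking Theorem~\ref{DMEthm}, and close the cycle with the growth estimate $\GKdim H\le \dim\mathfrak{t}+\GKdim kG(H)$ together with Bass--Guivarc'h--Gromov (your reshuffling of which implications are proved directly is only cosmetic). The one small point the paper includes that you omit is the verification that $H$ is in fact noetherian (via PBW and the Hilbert Basis Theorem for skew extensions), which is needed for the hypotheses of Conjecture~\ref{wild} and for the DME to be well posed.
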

\medskip
 
In our second topic, which is the focus of $\S$\ref{finiteness}, we weaken the noetherian condition by studying when a group algebra $kG$ is a
 (semi)prime Goldie ring; equivalently when $kG$ has a (semi)simple artinian ring of fractions $Q(kG)$. Building on recent beautiful results of Bartholdi, Kielak, 
 Kropholler and Lorensen \cites{BK, KrL}, we prove the following result. See Theorem~\ref{done} for an expanded version of this result and 
 $\S$\ref{finiteness} for unexplained terminology.
 
\begin{theorem}\label{introthm2} Let $G$ be a group and $k$ an algebraically closed field of characteristic $0$. 
Consider the following statements:
 \begin{enumerate}
\item[(a)] $G$ is amenable and there is a bound on the orders of finite subgroups of $G$.
\item[(b)] $kG$ has finite (right) uniform dimension.
 \item[(c)] $Q(kG)$ exists and is semisimple artinian.
\item[(d)] $G$ is elementary amenable and there is a bound on the orders of finite subgroups of~$G$.
\end{enumerate}
Then
 $\qquad \qquad (d) \Longrightarrow (c) \Longleftrightarrow (b)\Longrightarrow (a).$
\end{theorem}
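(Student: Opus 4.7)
The plan is to treat the four implications separately, leaning on the classical fact that $kG$ is semiprime whenever $\cchar k = 0$ (Passman) together with the deep external inputs of Bartholdi-Kielak and Kropholler-Lorensen cited in the introduction.

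The direction $(c)\Rightarrow(b)$ is immediate from Goldie's theorem: a ring possessing a semisimple artinian classical right ring of fractions is itself right Goldie, and hence of finite right uniform dimension. For the converse $(b)\Rightarrow(c)$, combining semiprimeness with finite right uniform dimension reduces the Goldie property to verifying either ACC on right annihilators or the vanishing of the right singular ideal; for group algebras I would expect the latter to follow directly from the semiprime-plus-finite-$\udim$ hypothesis, either via a Farkas-type argument or through the nonsingularity results implicit in the work cited.

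For $(d)\Rightarrow(c)$ I would use the structural results of Kropholler-Lorensen to produce, from elementary amenability together with the bound on orders of finite subgroups, a torsion-free normal subgroup $H\leq G$ of finite index, and then a Hughes-free division ring of fractions $D$ for $kH$. Realising $kG$ as a crossed product $kH * (G/H)$ and localising at the nonzero elements of $kH$ yields $D * (G/H)$, a crossed product of a division ring by a finite group, which is semisimple artinian by Maschke's theorem in characteristic zero; this ring is then identified with $Q(kG)$.

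Finally, $(b)\Rightarrow(a)$ splits into two pieces. Boundedness of finite-subgroup orders: for each finite $H\leq G$, the algebraically closed characteristic-zero group algebra $kH$ decomposes into $r(H)$ matrix blocks, where $r(H)$ is the number of conjugacy classes of $H$; the corresponding $r(H)$ primitive central idempotents $e_1,\dots,e_{r(H)}$ of $kH$ are orthogonal idempotents in $kG$ summing to $1$, giving a direct-sum decomposition $kG=\bigoplus_i e_i kG$ into nonzero right ideals and hence $\udim kG \geq r(H)$; since $r(H)\to\infty$ as $|H|\to\infty$ by Landau's theorem, a uniform bound on $\udim kG$ bounds $|H|$. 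Amenability of $G$ is the crux: here I would invoke the Bartholdi-Kielak characterisation of amenability via algebraic properties of $kG$, whose contrapositive converts non-amenability of $G$ into an infinite direct sum of nonzero right ideals inside $kG$, contradicting (b). This amenability half is the principal obstacle, since it is the only step that is not a direct consequence of classical Goldie theory, Maschke's theorem, or elementary finite-group calculations, and it is the main site of the deep $L^2$-theoretic input of Bartholdi and Kielak.
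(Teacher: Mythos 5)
Your handling of $(c)\Leftrightarrow(b)$ and $(b)\Rightarrow(a)$ follows the paper's route: the amenability half of $(b)\Rightarrow(a)$ is exactly the contrapositive of the Kropholler--Lorensen criterion (Theorem~\ref{small}) via additivity and monotonicity of uniform dimension, and your conjugacy-class count with Landau's theorem is a harmless variant of the paper's computation $\udim kF=\sum n_i\geq\sqrt{|F|}$. However, in $(b)\Rightarrow(c)$ you have glossed over the one nontrivial input. A semiprime ring of finite right uniform dimension need \emph{not} be right Goldie: Goldie's theorem in the form of Theorem~\ref{Goldiethm}(b) additionally requires the vanishing of the right singular ideal (equivalently, one could verify ACC on right annihilators), and neither condition is a formal consequence of ``semiprime plus finite $\udim$''. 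The fact the paper uses is Snider's theorem \cite{S} that $\zeta(kG)=0$ for \emph{every} group algebra over a field of characteristic $0$; this has to be cited or proved, and your suggestion that nonsingularity ``follows directly from the semiprime-plus-finite-$\udim$ hypothesis'' is not correct as stated.

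The implication $(d)\Rightarrow(c)$ is where your proposal genuinely fails. First, you produce a torsion-free normal subgroup $H$ of finite index with no justification: an elementary amenable group with a bound on the orders of its finite subgroups is not known (and I believe is not in general true) to be virtually torsion-free, and nothing in \cite{KrL} -- which concerns the strong rank condition and amenability, not quotient rings -- supplies such a subgroup. Second, and more fundamentally, even granting such an $H$, the existence of a classical (let alone Hughes-free) division ring of fractions for $kH$ presupposes that $kH$ is an Ore domain; for torsion-free elementary amenable groups this is precisely the zero-divisor-plus-Ore statement whose proof is the hard content of the Kropholler--Linnell--Moody theorem, resting on Moody's induction theorem. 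So your argument assumes a result of the same depth as the one being proved, and the crossed-product-plus-Maschke step would in any case only finish the virtually torsion-free case. The paper handles $(d)\Rightarrow(c)$ by directly quoting \cite[Theorem~1.2]{KLM}, which treats arbitrary elementary amenable groups with bounded finite subgroups; you should do the same (or reproduce that machinery) rather than attempt the reduction you describe.
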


There is an analogous but slightly more complicated result (Theorem~\ref{pdone}) in positive characteristic. 

 One consequence of these two results is that, if the zero divisor conjecture has a negative answer for amenable groups, then there are counterexamples that are very far from domains. For example, in Proposition~\ref{zerodivamen} we prove the following result.

\begin{proposition}\label{zerodivamen-intro} 
Assume that there exists a torsion free amenable group $H$ and a field $k$ such that $kH$ is not a domain. Then there is a finitely generated torsion free amenable group $G $ such that $kG$ has infinite uniform dimension. 
\end{proposition}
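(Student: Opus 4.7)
The plan is to replace $H$ by a finitely generated subgroup, build $G$ as the restricted wreath product $H\wr\Z$, and then produce inside $kG$ a strictly ascending infinite chain of right annihilators; the conclusion will follow from Goldie theory combined with Theorem~\ref{introthm2}. First I pick nonzero $a,b\in kH$ with $ab=0$ and replace $H$ by the subgroup generated by $\operatorname{supp}(a)\cup\operatorname{supp}(b)$, which is finitely generated, torsion free, amenable, and still contains $a,b$. Now set $G:=H\wr\Z=H^{(\Z)}\rtimes\Z$, with generator $t$ of $\Z$ acting by shift. Then $G$ is finitely generated (by the generators of $H$ in the $0$th factor together with $t$), torsion free (the $\Z$-coordinate of any torsion element must vanish, reducing to torsion in $H^{(\Z)}$, which is trivial) and amenable (the class of amenable groups is closed under directed unions, direct sums and extensions).

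For the key construction let $H_i:=t^iHt^{-i}$ and set $a_i:=t^iat^{-i}$, $b_i:=t^ibt^{-i}\in kH_i$. The $H_i$ pairwise commute, so $a_ib_i=0$ while $a_i$ and $b_j$ commute for $i\neq j$. Define $X_n:=a_1a_2\cdots a_n\in kG$. Since $a_{n+1}$ commutes with $X_n$, whenever $X_ny=0$ one has $X_{n+1}y=a_{n+1}X_ny=0$, so $r_{kG}(X_n)\subseteq r_{kG}(X_{n+1})$. The inclusion is strict because $X_{n+1}b_{n+1}=a_1\cdots a_n(a_{n+1}b_{n+1})=0$, whereas $X_nb_{n+1}$ corresponds to $a^{\otimes n}\otimes b$ under the natural isomorphism $k[H_1\times\cdots\times H_{n+1}]\cong kH^{\otimes(n+1)}$ and is therefore nonzero. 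Thus $b_{n+1}\in r_{kG}(X_{n+1})\setminus r_{kG}(X_n)$, and the chain
\[
r_{kG}(X_1)\subsetneq r_{kG}(X_2)\subsetneq\cdots
\]
ascends strictly.

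To conclude, observe that $G$ is torsion free, so $kG$ is semiprime in every characteristic by Passman's theorem. By Goldie's theorem, if $Q(kG)$ were semisimple artinian then $kG$ would satisfy ACC on right annihilators; the chain above shows that it does not, so $Q(kG)$ is not semisimple artinian. The equivalence $(b)\Leftrightarrow(c)$ of Theorem~\ref{introthm2} (or its positive characteristic analogue Theorem~\ref{pdone}) then yields that $kG$ has infinite uniform dimension. The main obstacle is the choice of the telescoping sequence $X_n=a_1\cdots a_n$: one needs $a_{n+1}$ both to commute with $X_n$ (ensuring containment of the annihilators) and to annihilate $b_{n+1}$ in $kH_{n+1}$ (ensuring strictness). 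This is precisely what the wreath product, with its commuting conjugate copies of $H$, is tailored to provide.
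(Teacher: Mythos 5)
Your argument uses the same group $G = H\wr C_\infty$ as the paper, but the mechanism by which you detect infinite uniform dimension is genuinely different. The paper first shows, via $(b)\Rightarrow(d)$ of Theorem~\ref{done} and Connell's theorem, that $Q(kH)\cong M_t(D)$ with $t=\udim kH>1$ because $kH$ is not a domain, and then applies the inequality $\udim k(P\times Q)\ge(\udim kP)(\udim kQ)$ (Lemma~\ref{prod}) to the base group $A=\bigoplus H$ of the wreath product to get $\udim kA\ge t^n$ for all $n$, finishing with Lemma~\ref{udimgp}. You instead build an explicit strictly ascending chain of right annihilators of the elements $X_n=a_1\cdots a_n$ from shifted copies of a zero-divisor pair, so that $kG$ fails max-ra, and then transfer this back to uniform dimension via Goldie's theorem. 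In characteristic $0$ this transfer is legitimate: $kG$ is semiprime with zero singular ideal, so finite uniform dimension would force $kG$ to be semiprime Goldie and in particular to satisfy max-ra. Your construction itself is correct -- the identification of $X_nb_{n+1}$ with $a^{\otimes n}\otimes b$ inside $k[H_1\times\cdots\times H_{n+1}]\cong kH^{\otimes(n+1)}$ does establish nonvanishing -- and it is arguably more self-contained than the paper's, since it needs only the zero-divisor pair and not the Goldie quotient ring of $kH$ or the computation $t>1$.

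However, your parenthetical appeal to ``the positive characteristic analogue Theorem~\ref{pdone}'' is a genuine gap. Theorem~\ref{pdone} asserts only $(c)\Rightarrow(b)$, not $(b)\Leftrightarrow(c)$, and the paper shows that the missing implication is actually false in characteristic $p$: for the lamplighter group $C_p\wr C_\infty$ one has $\udim kG=1$ while $kG$ fails max-ra, so failure of ACC on right annihilators does not imply infinite uniform dimension there. Your final step therefore collapses in positive characteristic. This is consistent with the paper itself: the body version of the statement, Proposition~\ref{zerodivamen}, carries the hypothesis $\cchar k=0$, and Remark~\ref{bad-char-p} explicitly records that the argument does not extend to $\cchar k=p>0$. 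Your proof should accordingly be read as a (correct, and somewhat different) proof of the characteristic-$0$ case only.
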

\medskip 

In $\S$\ref{noetherian} we address our third topic, a question alluded to above, namely: which group algebras $kG$ are noetherian? Recall that every (right) noetherian ring $R$ has (right) (Gabriel-Rentschler-Krause) Krull dimension, $\Kdim R$, in the sense of \cite[Chapter~6]{McCR}. When $G$ is polycyclic-by-finite $\Kdim kG <\infty$, 
so we ask whether a group algebra $kG$ is noetherian with finite Krull dimension only if $G$ is polycyclic-by-finite; in fact we are bold enough to propose a positive answer as Conjecture~\ref{Krullqn}. The additional hypothesis of finite Krull dimension opens the door to a proof by induction. We do not prove this conjecture, but we show that a minimal counterexample $\widehat{G}$ to a positive answer is quite strongly constrained. This is given in Theorem~\ref{summary}, with an abbreviated version as follows. Here, a \emph{just infinite} group is an infinite group with all its proper factor groups finite, while a \emph{hereditarily just infinite} group is a residually finite group in which every subgroup of finite index is just infinite. 

\begin{theorem}\label{shortnoeth} Let $k$ be an algebraically closed field with $\cchar k = 0$. Assume that $n$ is minimal such that there exists a group $G$ which is not polycyclic-by-finite but with $kG$ noetherian of finite Krull dimension $n$. Then there exists a group $\widehat{G}$ with the same properties, such that
\begin{enumerate}
\item $\widehat{G}$ is amenable but not elementary amenable;
\item $\widehat{G}$ satisfies the ascending chain condition (ACC) on subgroups and there is a bound on the orders of its finite subgroups;
\item $\widehat{G}$ is just infinite, and is either $(a)$ hereditarily just infinite or $(b)$ simple;
\item if $\widehat{G}$ is not simple, then it has no infinite torsion subgroups.
\end{enumerate}
\end{theorem}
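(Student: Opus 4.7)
The plan is to construct $\widehat{G}$ in two stages starting from the hypothesised minimal counterexample $G$: first pass to a carefully chosen quotient to achieve just-infiniteness, then (if necessary) replace by a finite-index simple subgroup. Two of the conclusions come almost for free. By Theorem~\ref{introthm2}, the finite uniform dimension of the noetherian ring $kG$ forces $G$ to be amenable with bounded orders of finite subgroups. Also, for any subgroup $H\le G$ the augmentation ideal $\omega(H)$ of $kH$ satisfies $kG/\omega(H)kG\cong k[H\backslash G]$ as right $kG$-modules, so $h-1\in\omega(H)kG$ iff $h\in H$; an ascending chain of subgroups therefore produces an ascending chain of right ideals of $kG$, which stabilises by noetherianity. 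So $G$ has the max condition on subgroups, and combined with the known result that an elementary amenable group with max condition is polycyclic-by-finite (Kropholler), $G$ cannot be elementary amenable. These properties pass to quotients and to finite-index subgroups, so they will persist for whichever $\widehat{G}$ we eventually select.

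For (3), using ACC on normal subgroups, take $N\trianglelefteq G$ maximal with $\widehat{G}_1:=G/N$ still not polycyclic-by-finite; then every proper quotient $\widehat{G}_1/M$ (with $M\ne 1$) is polycyclic-by-finite, and I claim each is in fact finite. Suppose instead that $\widehat{G}_1/M$ is infinite polycyclic-by-finite of Hirsch length $h\ge 1$. Writing $k\widehat{G}_1=kM*(\widehat{G}_1/M)$ as a crossed product and applying the standard formula $\Kdim(R*Q)=\Kdim R+h(Q)$ for noetherian crossed products over polycyclic-by-finite groups yields $\Kdim kM=n-h<n$. Freeness of $kG$ over $kM$ makes $kM$ noetherian (right ideals of $kM$ contract from right ideals of $kG$), and $M$ cannot be polycyclic-by-finite, since otherwise $\widehat{G}_1$ would be an extension of polycyclic-by-finite by polycyclic-by-finite and therefore polycyclic-by-finite. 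So $M$ is a counterexample of smaller Krull dimension, contradicting the minimality of $n$. Thus $\widehat{G}_1$ is just infinite. Wilson's classification then partitions just infinite groups into (a) hereditarily just infinite, (b) near-simple (possessing a simple normal subgroup of finite index), and (c) branch; the max condition on subgroups excludes (c). In case (b), set $\widehat{G}$ to be the simple finite-index subgroup $S$ of $\widehat{G}_1$, noting that $kS$ is noetherian and that minimality of $n$ forces $\Kdim kS=n$ (since $S$ is infinite simple, hence not polycyclic-by-finite). Otherwise set $\widehat{G}=\widehat{G}_1$.

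For (4), suppose $\widehat{G}$ is hereditarily just infinite (and so residually finite) and let $T\le\widehat{G}$ be a torsion subgroup. Max condition makes $T$ finitely generated, $T$ inherits residual finiteness from $\widehat{G}$, and the bound on orders of finite subgroups of $\widehat{G}$ forces $T$ to have bounded exponent. Zelmanov's positive solution of the Restricted Burnside Problem then forces $T$ to be finite. The main obstacle is the just-infinite step in paragraph two: the crossed product Krull dimension formula must be applied in precisely the right form (cf.\ McConnell--Robson), and Wilson's classification invoked carefully so that ACC on subgroups cleanly excludes the branch case and produces the hereditarily-just-infinite-or-simple dichotomy.
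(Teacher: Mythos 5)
Your overall architecture (pass to a quotient, invoke the Wilson--Grigorchuk trichotomy, kill the branch case via $\mathrm{Max}$, handle torsion subgroups via Zelmanov) matches the paper, and conclusions (1), (2) and (4) are handled essentially as in the paper. But there is a genuine gap at the crucial step: your claim that the maximal non-polycyclic-by-finite quotient $\widehat{G}_1=G/N$ is already just infinite. You deduce this from the ``standard formula'' $\Kdim(R\ast Q)=\Kdim R+h(Q)$ for crossed products over polycyclic-by-finite $Q$, in order to conclude $\Kdim kM=n-h<n$. Only the inequality $\Kdim(R\ast Q)\le \Kdim R+h(Q)$ is a theorem (McConnell--Robson, \S6.5); the lower bound $\Kdim(R\ast Q)\ge \Kdim R+h(Q)$, which is exactly what your argument needs, fails in general --- for instance $k[y^{\pm 1}][x^{\pm 1};\sigma]$ with $\sigma(y)=qy$, $q$ not a root of unity, is simple of Krull dimension $1$, not $2$ --- and no such equality is known for the group-ring crossed products $k\widehat{G}_1=kM\ast(\widehat{G}_1/M)$ arising here. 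Without it, an infinite polycyclic-by-finite quotient $\widehat{G}_1/M$ only yields that $M$ is another counterexample with $\Kdim kM=n$, which is no contradiction. This is precisely why the paper's Proposition~\ref{mincrim} does \emph{not} claim the maximal quotient is just infinite: instead it first proves $kH$ and $kM_1$ are \emph{prime} (via $\Delta^+=1$ and Connell's theorem), uses that factoring a prime noetherian ring by a nonzero ideal strictly drops Krull dimension (\cite[Proposition~6.3.11]{McCR}) to bound the Hirsch numbers $h(H/M_i)<n$, and runs a descent $M_1\supset M_2\supset\cdots$ through normal subgroups of $H$ which must terminate; the just infinite group obtained is a \emph{subfactor}, not a quotient, of $G$. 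You would need to replace your crossed-product step by some such primeness-plus-descent argument.

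A secondary imprecision: in the trichotomy the finite-index normal subgroup is $L_1\times\cdots\times L_t$ with the $L_i$ conjugate copies of $L$, and you cannot simply assert a simple normal subgroup of finite index; the paper reduces to $t=1$ by noting that for $t>1$ the ring $kL_1$ is a proper factor of the prime noetherian ring $kH$, forcing $\Kdim kL_1<n$ and hence $L_1\in\mathcal{P}$, a contradiction. This again uses the primeness of the relevant group algebra, which your proposal never establishes.
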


Once again, a slightly weaker result holds in characteristic $p>0$; see Theorem~\ref{summary}.
\bigskip

\subsection{Notation}\label{notation} Throughout, $k$ will denote an arbitrary field, with additional hypotheses on $k$ made explicit when required. Recall that a field $k$ is \emph{absolute} if it is an algebraic extension of a finite field.

The class of polycyclic-by-finite groups will be denoted by $\mathcal{P}$; thus, $G \in \mathcal{P}$ if and only if $G$ has a finite series of subgroups 
\begin{equation}\label{Pchain} 1 = H_0 \subset H_1 \subset \cdots \subset H_n = G
\end{equation}
 with $H_i \triangleleft H_{i+1}$ for all $i$ and each subfactor $H_{i + 1}/H_i$ either cyclic or finite. If $G \in\mathcal{P}$, the \emph{Hirsch number} $h(G)$ is the number of infinite cyclic factors in a chain \eqref{Pchain}.
 Note that $G\in \mathcal{P}$ has a poly-(infinite cyclic) characteristic subgroup of finite index \cite[Lemma~10.2.2]{P}. The class of groups satisfying the ascending chain condition (ACC) for subgroups is denoted $\mathrm{Max}$. It is easy to see that the solvable groups with $\mathrm{Max}$ are the polycyclic groups.

If $T$ is a subgroup of a group $\Gamma$ and $S$ is a subset of $\Gamma$ we denote the \emph{centraliser} of $S$ in $T$ by $\Cent_T(S)$, and the \emph{normaliser} of $S$ in $T$ by $N_T(S)$; that is,
$$ \Cent_T(S) \; = \; \{ t \in T : tst^{-1} = s \quad\forall s\in S \}, \textit{ and } N_T(S) \; = \; \{t \in T \, : \, tst^{-1} \in S \quad\forall s \in S\}. $$ 
The \emph{FC-subgroup} of $T$ is denoted by $\Delta(T)$; that is, 
$$ \Delta (T) \; := \; \{t \in T : |T \, : \, \Cent_T(t)| < \infty \},$$
the characteristic subgroup of $T$ composed of those $t\in T$ with only finitely many conjugates. The elements of finite order in $\Delta(T)$ form a characteristic locally finite subgroup, \emph{ the torsion $FC$-subgroup} $\Delta^+ (H)$ of $H$. Moreover,
 $\Delta(T)/\Delta^+ (T)$ is torsion-free abelian. See \cite[$\S$4.1]{P} and in particular by \cite[Lemma~4.1.6]{P} for more details. 
 
The \emph{Gelfand-Kirillov dimension} of a $k$-algebra $R$, resp. of an $R$-module $M$, is denoted by $\GKdim R$ resp.
 $\GKdim M$. Our reference for the Gelfand-Kirillov dimension is \cite{KL}. Further, $\mathrm{Spec}(R)$ denotes the space of prime ideals of $R$.
 
 Given a Hopf algebra $H$ we denote the group of group-likes of $H$ by $G(H)$, and the space of primitive elements of $H$ by $P(H)$. 
\bigskip

%%%%%%%%%%%%%%%%%%%%%%%%%%%%%%%%%
\section{The Dixmier-Moeglin Equivalence}\label{group}

In this section we study the Dixmier-Moeglin Equivalence for group rings of polycyclic-by-finite groups, proving Theorem~\ref{DMEthm} and Corollary~\ref{cocomcor-intro}. We begin with the relevant definitions.

\begin{definition}\label{DMEdefn} Let $R$ be a noetherian $k$-algebra.
\begin{itemize}
\item[(i)] A prime ideal $P$ of $R$ is \emph{rational} if the centre of the Goldie quotient ring of $R/P$ is an algebraic extension of $k$.
\item[(ii)] $R$ satisfies the \emph{Dixmier-Moeglin
Equivalence} (DME) if for every $P \in \mathrm{Spec}(R)$ the following properties are equivalent:
\begin{enumerate}
\item[(A)] $P$ is primitive;
\item[(B)] $P$ is rational;
\item[(C)] $P$ is locally closed in $\mathrm{Spec}(R)$ in the Zariski topology.
\end{enumerate}
\end{itemize}
\end{definition}

Recall that a noetherian $k$-algebra $R$ is said to satisfy the \emph{Nullstellensatz} if every prime ideal is an intersection of primitive ideals and, moreover, the endomorphism algebra of every simple $R$-module is algebraic over $k$. Many noetherian algebras satisfy the Nullstellensatz; for example any affine algebra over an uncountable 
field $k$ \cite[Propositions~9.1.6 and ~9.1.7]{McCR}. Moreover, if the noetherian algebra $R$ satisfies the Nullstellensatz then the implications $(C) \Longrightarrow (A) \Longrightarrow (B)$ of Definition~\ref{DMEdefn}(ii) always hold \cite[Lemma~II.7.15]{BG}.
 Since the ground-breaking work of Dixmier and Moeglin \cites{D, M} who proved that the enveloping algebra $U(\mathfrak{g})$ of every 
 finite dimensional complex Lie algebra $\mathfrak{g}$ satisfies the DME, it has been shown to hold for many important classes of noetherian algebras. A short
 survey with detailed references is given in \cite{Be}; see also the summary in \cite[pp.~1844-1845]{BL}. 

%%%%%%%%%%%%%%%%%%%%%%%%%%%%

\subsection{On the DME for group algebras of polycyclic-by-finite groups}\label{subsect2.2}

The objective in this subsection is Theorem~\ref{main}, which is a more precise version of Theorem~\ref{DMEthm}. 
Recall that $\mathcal{P}$ denotes the class of polycyclic-by-finite groups.

\begin{lemma}\label{FClem} Let $G \in \mathcal{P}$. Then $\Delta (G)$ contains a free abelian subgroup $A$ of finite index which is normal in $G$.
\end{lemma}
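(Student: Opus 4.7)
My strategy is to first produce a normal, abelian, finite-index subgroup $Z$ of $\Delta(G)$, and then to extract the required free abelian normal subgroup $A$ from $Z$ as its subgroup of uniform powers.

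For the first step, since $G \in \mathcal{P}$ every subgroup of $G$ is itself polycyclic-by-finite and hence finitely generated; in particular $\Delta(G) = \langle x_1, \dots , x_m\rangle$ for some finite set of generators. By the definition of the FC-subgroup, each centraliser $\Cent_G(x_i)$ has finite index in $G$, so their intersection
$$N \; := \; \bigcap_{i=1}^m \Cent_G(x_i) \; = \; \Cent_G(\Delta(G))$$
has finite index in $G$. Because $\Delta(G)$ is characteristic in $G$, $N$ is normal in $G$. I would then set $Z := N \cap \Delta(G)$; as the intersection of two normal subgroups this is normal in $G$, it has finite index in $\Delta(G)$ since $N$ has finite index in $G$, and it is abelian because it is centralised by $N$, which contains it.

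For the second step, $Z$ is a finitely generated abelian group, so the structure theorem gives $Z = T \oplus F$ with $T$ its finite torsion subgroup and $F$ free abelian, say of rank $r$. Letting $e$ be the exponent of $T$, I would take
$$A \; := \; \{z^e : z \in Z\}.$$
Since $Z$ is abelian this is a subgroup, and in the displayed decomposition it coincides with $F^e$, which is free abelian of rank $r$ and of finite index $e^r|T|$ in $Z$, and therefore of finite index in $\Delta(G)$. Moreover, since $Z$ is normal in $G$, conjugation by any $g\in G$ sends $z^e$ to $(gzg^{-1})^e \in A$, so $A$ is normal in $G$.

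The only point that needs genuine care is that the free summand $F$ in $Z=T\oplus F$ is only canonical up to automorphism of $Z$, so $F$ itself need not be $G$-invariant; the ``$e$-th power'' trick sidesteps this, because $A = Z^e$ is defined intrinsically from $Z$ and is therefore automatically preserved by the $G$-action on the normal subgroup $Z$.
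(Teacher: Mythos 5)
Your proof is correct, but it takes a genuinely different route from the paper's. The paper quotes Passman's structure result for finitely generated FC-groups \cite[Lemma~4.1.5]{P} (namely that $\Delta^+(G)$ is finite and $\Delta(G)/\Delta^+(G)$ is free abelian of finite rank) and then uses residual finiteness of polycyclic-by-finite groups to choose a normal subgroup $M$ of finite index in $G$ with $M \cap \Delta^+(G)=\{1\}$, taking $A = M\cap\Delta(G)$, which is free abelian because it embeds in $\Delta(G)/\Delta^+(G)$. You instead work from the bare definition of the FC-subgroup: since $\Delta(G)$ is finitely generated, $\Cent_G(\Delta(G))$ is a finite intersection of finite-index centralisers and is normal (being the centraliser of a characteristic subgroup), so $Z = \Cent_G(\Delta(G))\cap\Delta(G)$ is a normal abelian subgroup of finite index in $\Delta(G)$; the structure theorem for finitely generated abelian groups and the verbal subgroup $Z^e$ (with $e$ the exponent of the torsion part) then yield a free abelian subgroup that is automatically $G$-invariant. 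All the steps check out: $Z^e$ is a subgroup because $Z$ is abelian, it equals the $e$-th powers of the free part, has index $e^r|T|$, and is preserved by conjugation since it is defined intrinsically from the normal subgroup $Z$ --- your closing remark correctly identifies and resolves the one subtlety. The trade-off is that your argument is more elementary and self-contained (no appeal to residual finiteness or to the $\Delta^+$ machinery), while the paper's is shorter given the cited results and keeps $\Delta^+(G)$ in view, which is convenient since that subgroup reappears elsewhere in the paper.
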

\begin{proof} Since $G$ satisfies ACC on subgroups, $\Delta (G)$ is finitely generated. Therefore, by \cite[Lemma~4.1.5]{P}, the 
normal subgroup $\Delta^+(G)$ of $\Delta (G)$ is finite, with $\Delta (G)/\Delta^+(G)$ free abelian of finite rank. 
Since $G$ is polycyclic-by-finite it is residually finite \cite[Lemma~10.2.11]{P}, so there exists a normal subgroup $M$ of finite index in
 $G$ such that $M \cap \Delta^+(G) = \{1\}$. Put $A := M \cap \Delta (G)$, so that $A$ is normal in $G$, has finite index in $\Delta (G)$ and embeds
 in $\Delta (G)/\Delta^+(G)$, hence is free abelian. 
\end{proof}

\begin{lemma}\label{FClem2}Let $F$ be a finite normal subgroup of a group $T$. Then $\Delta(T/F) = \Delta(T)/F$. In particular, if $\Delta (T)$ is finite then the FC-subgroup of $T/\Delta (T)$ is trivial.
\end{lemma}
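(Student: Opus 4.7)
The plan is to prove equality of the two subgroups of $T/F$ by establishing both inclusions directly from the definition of the FC-subgroup. Throughout, I would write $\pi: T \to T/F$ for the quotient map and use that $F$ is finite and normal.

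The inclusion $\Delta(T)/F \subseteq \Delta(T/F)$ is the easy direction. If $t \in \Delta(T)$, then $C_T(t)$ has finite index in $T$, and the image $\pi(C_T(t)) = C_T(t)F/F$ is contained in $C_{T/F}(tF)$ and has finite index in $T/F$. Hence $tF$ has finitely many conjugates in $T/F$, i.e.\ $tF \in \Delta(T/F)$.

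For the reverse inclusion $\Delta(T/F) \subseteq \Delta(T)/F$, suppose $tF \in \Delta(T/F)$ and set
$$S \;=\; \pi^{-1}\bigl(C_{T/F}(tF)\bigr) \;=\; \{\, s \in T : [s,t] \in F\,\}.$$
By hypothesis $|T:S| < \infty$. The key observation is that conjugation by any $s \in S$ sends $t$ into the coset $tF$, since $sts^{-1}t^{-1} \in F$. Thus the $S$-conjugacy orbit of $t$ is contained in $tF$ and hence is finite of size at most $|F|$, which gives $|S:C_S(t)| \le |F| < \infty$. Combining with $|T:S|<\infty$ yields $|T:C_T(t)|<\infty$, so $t \in \Delta(T)$ and therefore $tF \in \Delta(T)/F$, as required. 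The ``in particular'' assertion follows at once by taking $F = \Delta(T)$ (which is characteristic, hence normal) and noting that $\Delta(T/\Delta(T)) = \Delta(T)/\Delta(T)$ is trivial.

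No step in this argument looks genuinely difficult; the only subtlety is the finiteness of the $S$-orbit of $t$ in the harder inclusion, which is really just the observation that the map $s \mapsto [s,t]$ sends $S$ into $F$ with fibres equal to cosets of $C_S(t)$. I expect the author's proof to proceed along essentially the same lines.
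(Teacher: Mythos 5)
Your proof is correct and is essentially the paper's argument: the paper phrases it by noting that the $T$-conjugates of $x$ lie in the union of the cosets $x_iF$ forming the $T/F$-conjugacy class of $xF$, meeting each coset, so the two conjugate counts differ by a factor of at most $|F|$; your orbit--stabilizer/centralizer formulation is just the index-theoretic rendering of the same counting. No gap.
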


\begin{proof} Let $x \in T$ and let $\{x_i F \, : \, i \in \mathcal{I}\}$ constitute the $T/F$-conjugates of $xF$. The lemma is immediate from the fact that the $T$-conjugates $\euls{C}_T(x)$ of $x$ constitute a subset of $ \bigcup_{i \in \mathcal{I}} x_i F $ with $\euls{C}_T (x) \cap x_i F \neq \emptyset$ for each $i \in \mathcal{I}$.
\end{proof}

To prove Theorem~\ref{main} we first prove that group algebras of polycyclic-by-finite groups are residually simple artinian whenever they satisfy the (obviously necessary) requirement of being semiprime. For this we need some definitions.
Given a prime $q$, a finite group $F$ is called $q$-\emph{nilpotent} if $F$ has a normal subgroup $N$ of order prime to $q$, with $F/N$ a $q$-group. 
A polycyclic-by-finite group $G$ is then called \emph{$q$-nilpotent } if all its finite images are $q$-nilpotent. 

\begin{lemma}\label{caught} Let $G \in \mathcal{P}$ and $k$ be a field. If $k$ has characteristic $p>0$, assume in addition that $\Delta(G)$ contains no elements of order 
$p$. Then $kG$ is residually simple artinian; that is, 
\begin{equation}\label{tiny} 0\; = \; \bigcap \{ P \, : \, P \triangleleft kG, \; P \textit{ maximal, }\mathrm{dim}_k(kG/P) < \infty \}.
\end{equation} 
\end{lemma}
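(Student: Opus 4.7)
Plan: The goal is to show that the intersection of all cofinite maximal ideals of $kG$ is zero. My approach is to construct, for each non-zero $\alpha\in kG$, a cofinite ideal of $kG$ not containing $\alpha$; such an ideal is contained in a cofinite maximal ideal avoiding $\alpha$.

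First I would verify that $kG$ is semiprime. The hypothesis implies in particular that $\Delta^+(G)$, the torsion subgroup of $\Delta(G)$, is a finite $p'$-group (or trivial, if $p=0$), which by Passman's classical criterion is equivalent to $kG$ being semiprime. Next, using Lemma~\ref{FClem}, pick a $G$-normal free abelian subgroup $A$ of $\Delta:=\Delta(G)$ with $[\Delta:A]<\infty$. Then $k\Delta$ is a finitely generated module over the affine commutative noetherian algebra $kA$; in particular $k\Delta$ is an affine noetherian PI algebra, and being a subring of the semiprime ring $kG$ it is itself semiprime. By the Nullstellensatz for affine PI $k$-algebras, every maximal ideal of $k\Delta$ has finite codimension over $k$, and the intersection of all maximal ideals of $k\Delta$ is zero; in other words, $k\Delta$ is residually simple-artinian.

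Now given $0\neq\alpha\in kG$, pick $g_0$ in the support of $\alpha$ and set $\beta:=\pi_{\Delta}(\alpha g_0^{-1})\in k\Delta$, where $\pi_\Delta$ is the $k$-linear projection of $kG$ onto the $\Delta$-supported part; this $\beta$ is non-zero because the coefficient of $1$ in $\alpha g_0^{-1}$ equals $a_{g_0}\neq 0$. By the residual simple-artinian property of $k\Delta$, choose a cofinite maximal ideal $\mathfrak{m}\triangleleft k\Delta$ with $\beta\notin\mathfrak{m}$. Since conjugation by $G$ on $k\Delta$ factors through the finite group $G/C_G(\Delta)$, the orbit $\{g\mathfrak{m} g^{-1}:g\in G\}$ is finite, and its intersection $\tilde{\mathfrak{m}}:=\bigcap_{g\in G/C_G(\Delta)}\,g\mathfrak{m} g^{-1}$ is a $G$-invariant cofinite ideal of $k\Delta$ with $\beta\notin\tilde{\mathfrak{m}}$ (since $\tilde{\mathfrak{m}}\subseteq\mathfrak{m}$).

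The main obstacle is now to promote $\tilde{\mathfrak{m}}$ to a cofinite ideal of $kG$ not containing $\alpha$. Set $I:=kG\cdot\tilde{\mathfrak{m}}=\tilde{\mathfrak{m}}\cdot kG$, which is a two-sided ideal of $kG$ thanks to the $G$-invariance of $\tilde{\mathfrak{m}}$; then $kG/I\cong (k\Delta/\tilde{\mathfrak{m}})*_\sigma(G/\Delta)$, a crossed product of the finite-dimensional semisimple $k$-algebra $R:=k\Delta/\tilde{\mathfrak{m}}$ by the polycyclic-by-finite group $G/\Delta$. When $A\neq 1$, the Hirsch number strictly drops: $h(G/\Delta)=h(G)-\mathrm{rank}(A)<h(G)$, so an induction on Hirsch number (suitably strengthened to cover crossed products $R*G'$ with $R$ finite-dimensional semisimple and $G'$ polycyclic-by-finite, under an appropriate semiprimeness hypothesis) would produce a cofinite maximal ideal of $kG/I$ with non-zero image of $\alpha$. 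When $A=1$, that is when $\Delta$ is finite, the Hirsch number does not strictly decrease, but Lemma~\ref{FClem2} gives $\Delta(G/\Delta)=1$; a separate argument is then needed, exploiting residual finiteness of $G$ and the abundance of finite-index normal subgroups of $G/\Delta$ producing semisimple group-algebra quotients. This lifting step, and in particular the control of non-semisimple contributions of finite-index subgroups in characteristic $p$ (where the hypothesis on $p$-elements of $\Delta(G)$ becomes essential), is the heart of the proof.
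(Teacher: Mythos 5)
Your proposal does not reach the result; the step you yourself identify as ``the heart of the proof'' is exactly the content of the lemma, and it is left unproven. Concretely: after passing to $I=\tilde{\mathfrak{m}}\,kG$ you are faced with showing that the infinite-dimensional crossed product $(k\Delta/\tilde{\mathfrak{m}})\ast(G/\Delta)$ is residually finite-dimensional simple, which is a statement of the same type and difficulty as the original one. Your proposed induction on Hirsch number gives no reduction in the case $A=1$, i.e.\ when $\Delta(G)$ is finite --- and by Lemma~\ref{FClem2} that is precisely the ``generic'' situation one lands in after one step, and the case actually used in the proof of Theorem~\ref{main}. For that case you appeal to residual finiteness of $G$, but in characteristic $p$ residual finiteness is not enough: the finite quotients $G/N$ may have $p\mid |G/N|$, so $k(G/N)$ has a large Jacobson radical and contributes nothing towards \eqref{tiny}. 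Nothing in your plan explains how to manufacture finite quotients to which Maschke's theorem applies, and that is where the real work lies.

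The paper's proof supplies exactly this missing ingredient. In characteristic $0$ the lemma is immediate from residual finiteness plus Maschke (your PI/Nullstellensatz analysis of $k\Delta$ is not needed). In characteristic $p$ one fixes a prime $q\neq p$ and uses Roseblade's theorem to find a characteristic torsion-free $q$-nilpotent subgroup $Q$ of finite index, which by Cutolo--Smith is residually a finite $q$-group; Maschke then shows the co-artinian maximal ideals of $kQ$ intersect in $0$. Inducing each such ideal up to $G$ produces \emph{finite-dimensional} crossed products $(kQ/\widehat{M})\ast(G/Q)$ with semisimple coefficient ring, whose Jacobson radicals satisfy $J^n=0$ uniformly with $n=|G:Q|$. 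Hence the intersection $I$ in \eqref{tiny} satisfies $I^n=0$, and semiprimeness of $kG$ (the one point your sketch does handle correctly) forces $I=0$. Your decomposition along $\Delta(G)$ instead of along a residually-finite-$q$ subgroup of finite index is the wrong axis of reduction: it never produces the semisimple finite quotients on which the whole argument turns.
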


\begin{proof} If $k$ has characteristic 0 the result follows immediately from the fact that $G$ is residually finite, \cite[Lemma~10.2.11]{P}, together with Maschke's Theorem. We may therefore suppose that $k$ has characteristic $p > 0$. Fix a prime $q$ with $q \neq p$. We first prove:

\begin{sublemma}\label{sublem} Keep the notation as above. Then there exists a characteristic torsion-free subgroup $Q$ of finite index $n$ in $G$ with $Q$ a residually finite $q$-group.
\end{sublemma}

\begin{proof} By a result of Roseblade, \cite[Lemma~11.2.16]{P}, every polycyclic-by-finite group contains a characteristic $q$-nilpotent subgroup $Q$ of finite index. Since polycyclic-by-finite groups are poly-(infinite cyclic)-by-finite, and subgroups of finite index in $q$-nilpotent groups are again $q$-nilpotent \cite[proof of Lemma~11.2.16]{P}, we may choose such a subgroup $Q$ which is also torsion-free. Then, by \cite[Corollary~2.5]{CS}, $Q$ is residually a finite $q$-group. 
\end{proof}

 Returning to the proof of the lemma, pick a subgroup $Q$ as in Sublemma~\ref{sublem}. By Maschke's Theorem applied to the group algebras $kF$ of the finite $q$-group images $F$ of $Q$,
\begin{equation}\label{zero} 0 \; = \; \bigcap \{ M\, : \, M \triangleleft kQ, \; M \textit{ maximal, }\mathrm{dim}_k(kQ/M) < \infty \}. 
\end{equation}
Let $\mathcal{M}$ be the set of co-artinian maximal ideals of $kQ$ and, given $M \in \mathcal{M}$, set
 $\widehat{M} := \bigcap_{g\in G} M^g $. This is a finite intersection, so that there is a crossed product decomposition
\begin{equation}\label{crossed} R_{\widehat{M}}\; := \; kG/\widehat{M}kG \; \cong \; (kQ/\widehat{M})\ast (G/Q),
\end{equation}
with $\mathrm{dim}_k(R_{\widehat{M}}) < \infty$. By \eqref{zero} and the fact that $kG$ is a free $kQ$-module,
\begin{equation}\label{contain} 0 \; = \; \bigcap \{ \widehat{M}kG\, : \, M \in \mathcal{M} \}.
\end{equation}
Note that $kQ/\widehat{M}$ is semisimple and $R_{\widehat{M}}$ is generated as a $kQ/\widehat{M}$-module by a normalising set of $n := |G : Q|$ elements, namely the images in $R_{\widehat{M}}$ of a set of coset representatives of $Q$. Hence, if $J(R_{\widehat{M}})$ denotes the Jacobson radical of $R_{\widehat{M}}$, it follows from \cite[Theorem~7.2.5]{P} that
\begin{equation}\label{index} J(R_{\widehat{M}})^n \; = \; 0\qquad \text{for all $M \in \mathcal{M}$}.
\end{equation}
Denote the right side of \eqref{tiny} by $I$. By the yoga of prime ideals in crossed products of finite groups, as in for example \cite[$\S$14]{P3}, every maximal ideal $P$ occurring in the definition of $I$ features as a maximal ideal $P/\widehat{M}kG$ of an algebra $R_{\widehat{M}}$ as in \eqref{crossed}; indeed, the required ideal $\widehat{M}$ is simply $P \cap kQ$. Hence, by \eqref{index} and \eqref{contain},
\begin{align*} I^n \; &= \; \left( \bigcap \{ P \, : \, P \triangleleft kG, \; P \textit{ maximal, }\mathrm{dim}_k(kG/P) < \infty \}\right)^n\\
&\subseteq \; \bigcap \{ \widehat{M}kG\, : \, M \triangleleft kQ, \; M \textit{ maximal, }\mathrm{dim}_k(kQ/M) < \infty \}\\
&= \; 0.
\end{align*}
But we are assuming that there is no $p$-torsion in $\Delta (G)$. Thus \cite[Theorem~4.2.13]{P} implies that $kG$ is semiprime, whence $I^n=0$ implies that $I = 0$, as required.
\end{proof}

We are now ready to prove our main result on the DME, thereby proving Theorem~\ref{DMEthm} from the introduction.

\begin{theorem}\label{main} Let $k$ be a field and let $G \in \mathcal{P}$ with $G$ not nilpotent-by-finite. Then $kG$ fails to satisfy the Dixmier-Moeglin Equivalence. More precisely, 
\begin{enumerate}
\item[(i)] if $k$ is not an absolute field, then $kG$ has a primitive and rational ideal which is not locally closed;
\item[(ii)] if $k$ is an absolute field, then $kG$ has a rational ideal which is neither primitive nor locally closed.
\end{enumerate}
\end{theorem}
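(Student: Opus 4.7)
The plan is to exhibit, in each case, a prime ideal $P \in \mathrm{Spec}(kG)$ witnessing the failure of the DME. Non-local-closure of $P$ will come from Lemma~\ref{caught}: since the intersection of the co-artinian maximal ideals of $kG$ is zero, whenever $P$ coincides with the intersection of the co-artinian maximals strictly above it (which holds automatically when $P=0$ in a prime group algebra), $P$ is not locally closed. Rationality will be controlled by the FC-centre of $G$ modulo $P$. The primitive/non-primitive dichotomy between cases (i) and (ii) will hinge on whether $k$ admits the transcendental freedom needed to build a faithful simple $kG/P$-module.

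Combining Lemmas~\ref{FClem} and~\ref{FClem2} with standard arguments for transferring the DME between a group algebra and that of a finite-index subgroup, I would first reduce to the case where $G$ is torsion-free polycyclic with $A := \Delta(G)$ a free abelian $G$-normal subgroup and with $G/A$ having trivial FC-centre. Then $kG$ is a domain, so $0$ is prime, and the failure of local-closure at $0$ is immediate from Lemma~\ref{caught}. If $Z(Q(kG))$ is already algebraic over $k$, we are done with $P=0$; otherwise I would pass to a quotient by a carefully chosen $G$-invariant prime: select a character $\chi : A \to K^{\times}$ with values in a suitable extension $K \supseteq k$, and set $P = \ann_{kG}(M)$ where $M$ is a simple subquotient of the induced module $kG \otimes_{kA} K_\chi$. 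The trivial FC-centre of $G/A$ then constrains $Z(Q(kG/P))$ to be generated over $k$ by $\chi$-values and their $G$-conjugates, which can be chosen algebraic over $k$.

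For case (i), with $k$ not absolute, pick $\chi$ taking values in $k^{\times}$ that are transcendental over the prime subfield of $k$; this genericity ensures some simple quotient of the induced module is faithful, so $P$ is primitive, while rationality is preserved. For case (ii), with $k$ absolute, every element of $k^{\times}$ is algebraic over the prime subfield, and a Zalesskii--Roseblade type argument leveraging the absoluteness of $k$ forces all simple $kG$-modules to be finite-dimensional over $k$; since $kG/P$ is infinite-dimensional, $P$ cannot be primitive. The main obstacle is the primitivity analysis in case (i): arranging $\chi$ so that the induced module admits a simple, faithful quotient while simultaneously keeping the centre of $Q(kG/P)$ algebraic over $k$. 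This requires a careful interplay between the infinite-orbit action of $G/A$ on $A$ (forced by the failure of nilpotent-by-finiteness after the reduction) and the selection of generic character values.
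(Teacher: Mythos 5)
Your three guiding ideas --- Lemma~\ref{caught} for non-local-closure, trivial FC-centre for rationality (Formanek), and absoluteness of $k$ forcing all simple modules to be finite dimensional in case (ii) --- are exactly the ingredients the paper uses. But the proposal has two genuine gaps. First, the reduction. You propose to pass to the situation where $A:=\Delta(G)$ is free abelian and $\Delta(G/A)$ is trivial, citing ``standard arguments for transferring the DME between a group algebra and that of a finite-index subgroup.'' The reduction actually needed is to a \emph{quotient} group, not a finite-index subgroup, and the assertion that $\Delta(G/\Delta(G))$ is trivial is false in general: Lemma~\ref{FClem2} only gives this when $\Delta(G)$ is \emph{finite}. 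One must iterate, and the real difficulty is to guarantee that the process terminates at a quotient $G/F$ which is still \emph{not nilpotent-by-finite} (equivalently, is nontrivial with trivial FC-centre). The paper achieves this with a maximality argument: it chooses $N_0$ maximal among termini of upper central series of finite-index normal subgroups, shows $G/N_0$ is not nilpotent-by-finite and that $\Delta(G/N_0)$ is finite, and only then applies Lemma~\ref{FClem2} to get $F$ with $\Delta(G/F)=1$ and $G/F\neq 1$. Your proposal contains no substitute for this step, and without it the whole construction can collapse onto a nilpotent-by-finite (or trivial) quotient.

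Second, in case (i) you take $P=\ann_{kG}(M)$ for $M$ a simple subquotient of a module induced from a character of $A$. This buys primitivity by fiat, but it forfeits the other two properties: $kG/P$ is no longer a group algebra, so Lemma~\ref{caught} does not apply to it, and you give no argument that $P$ is an intersection of primes strictly containing it (indeed, if $M$ were finite dimensional, $P$ would be co-artinian maximal and hence locally closed). Likewise the claim that $Z(Q(kG/P))$ is ``generated by $\chi$-values and their conjugates'' is unsupported. The paper avoids all of this by taking $P=\mathfrak{f}kG$ with $kG/P\cong k(G/F)$ again a group algebra of an infinite polycyclic-by-finite group with trivial FC-centre: primitivity is then the Farkas--Passman/Roseblade criterion \cite[Theorem~F1]{R} (which your generic-character argument is essentially an attempt to reprove), rationality is Formanek's theorem \cite[Theorem~4.5.8]{P}, and non-local-closure is Lemma~\ref{caught} applied directly to $k(G/F)$. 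Your case (ii) outline is essentially correct once the reduction is repaired, since there the paper also uses Connell's theorem for primeness and \cite[Theorem~12.3.7]{P} for non-primitivity, as you anticipate.
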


\begin{proof}
Consider the following collection of ordered pairs of subgroups of $G$:
$$ \euls{B} \; := \; \{(N,B) : B \triangleleft G,\; |G/B| < \infty, N = \textit{ terminus of upper central series of }B\}. $$
Note that given $B \triangleleft G$ with $|G/B| < \infty$ then there exists $N$ such that $(N,B) \in \euls{B}$, since $G \in \mathrm{Max}$. 
Moreover $N$ is a characteristic subgroup of $B$, so that $N \triangleleft G$. Again since $G \in \mathrm{Max}$, we can choose 
$(N_0,B_0) \in \euls{B}$ such that $N_0$ is a maximal member of the set $\{N \, :\, \exists\, B \textit{ such that } (N,B) \in \euls{B}\}$. 
Fix this pair $(N_0,B_0)$.

\medskip

Observe that 
\begin{equation}\label{one} G/N_0 \textit{ is not nilpotent-by-finite.}
\end{equation}
For suppose that $L/N_0$ is a normal nilpotent subgroup of finite index in $G/N_0$. Then 
$$N_0\ \triangleleft\ L \cap B_0 \ \triangleleft \ G,$$
and $N_0$ is contained in the upper central series of $L \cap B_0$. Therefore $L \cap B_0$ is a normal nilpotent subgroup of finite index in $G$, contradicting our hypothesis on $G$. We now claim that 
\begin{equation}\label{two} |\Delta (G/N_0)| < \infty. 
\end{equation}
For suppose that \eqref{two} is false. Then by Lemma~\ref{FClem} applied to $G/N_0$ we can find a torsion free abelian subgroup $A/N_0$ of finite index in $\Delta(G/N_0)$ with $A \triangleleft G$. Define a subgroup $C$ of $G$ with $N_0 \subseteq C$, by 
$$ C/N_0 \; := \; \Cent_{G/N_0}(A/N_0).$$
Then 
$$ A \ \subseteq\ C\ \triangleleft\ G \ \textit{ and } \ |G \, : \, C| \; < \; \infty, $$
the first claim since $A$ is abelian and normal in $G$, and the second since $A/N_0$ is finitely generated and is contained in $\Delta (G/N_0)$. Define $D := B_0 \cap C$, so that
$$ N_0 \ \subset \ \widehat{A} := A \cap D\ \triangleleft \ D\ \triangleleft \ G, \; \textit{ with }\; |\widehat{A} : N_0| = \infty \textit{ and } \; |G : D |< \infty. $$
Note that $\widehat{A}$ is contained in the upper central series of $D$, since $N_0$ is and $\widehat{A}/N_0 \subseteq Z(D/N_0)$. Since $\widehat{A}/N_0$ is infinite the pair $(\widehat{A},D)$ contradicts the maximality of $N_0$, proving \eqref{two}.

Define $F$ to be the normal subgroup of $G$ such that $N_0 \subseteq F$ and $F/N_0 = \Delta (G/N_0)$. By \eqref{two}, $|F : N_0| < \infty$, so that $G/F \neq \{1\}$ by \eqref{one}, and $\Delta(G/F) = \{1\}$ by Lemma~\ref{FClem2}. 

\medskip 
We now consider cases $(i)$ and $(ii)$ separately. 

$(i)$ Since $k$ is not absolute, $k(G/F)$ is primitive by \cite[Theorem~F1]{R}. Since $\Delta(G/F) = \{1\}$, the centre of the Goldie quotient ring of $k(G/F)$ is $k$ by a result of Formanek 
\cite[Theorem~4.5.8]{P}. In other words, $\mathfrak{f}kG$ is a rational and primitive ideal of $kG$, where $\mathfrak{f}$ denotes the augmentation ideal of $kF$. On the other hand, $\Delta(G/F) = \{1\}$, so Lemma~\ref{caught} implies that $\mathfrak{f}kG$ is an intersection of co-artinian maximal ideals of $kG$. Thus
 $\mathfrak{f}kG$ is not locally closed.

$(ii)$ Suppose that $k$ is an absolute field. Since $\Delta(G/F) = \{1\}$, Connell's Theorem \cite[Theorem~4.2.10]{P} implies that $k(G/F)$ is prime. Hence $\mathfrak{f}kG$ is a rational prime ideal of $kG$, where rationality follows as in $(i)$. But, by Lemma~\ref{caught}, $\mathfrak{f}kG$ is not locally closed and it is not primitive since, by \cite[Theorem~12.3.7]{P}, the simple $k(G/F)$-modules are all finite dimensional.
\end{proof}

\medskip

%%%%%%%%%%%%%%%%%%%%
\subsection{Cocommutative Hopf algebras}\label{cocomsec}

If one considers Conjecture~\ref{wild} for cocommutative Hopf algebras two glaring obstacles quickly appear: the problem of determining which group algebras are noetherian and whether the only enveloping algebras $U(\mathfrak{t})$ that are noetherian are those for which $\dim_k\mathfrak{t}<\infty$. 
 Although there has been significant progress on the latter question in recent years \cites{SW, Bu, AM}, it remains open. If we sidestep these two problems, then Theorem~\ref{main} easily implies Conjecture~\ref{wild} for cocommutative Hopf algebras:

\begin{corollary}\label{cocomcor} Let $k$ be a field of characteristic 0 and let $H$ be a cocommutative Hopf $k$-algebra, where 
 either $k$ is algebraically closed or $H$ is pointed. Assume moreover 
that the group $G(H)$ of group-likes of $H$ is polycyclic-by-finite and that the Lie algebra $\mathfrak{t}$ of primitive elements of $H$ has finite dimension. Then Conjecture~\ref{wild} holds for $H$.
\end{corollary}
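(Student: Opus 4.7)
The plan is to use the Cartier--Gabriel--Kostant (CGK) structure theorem to reduce Conjecture~\ref{wild} for $H$ to the group algebra case settled in Theorem~\ref{main}. Under the given hypotheses (characteristic zero, with either $k$ algebraically closed or $H$ pointed cocommutative), CGK produces a Hopf algebra isomorphism $H \cong U(\mathfrak{t}) \# kG$, where $G := G(H) \in \mathcal{P}$ and $\mathfrak{t}$ is finite-dimensional. Consequently $H$ is affine and, as a smash product of a noetherian algebra by a polycyclic-by-finite group, noetherian; in the algebraically closed case $H$ is automatically pointed because its coradical equals $kG$. So Conjecture~\ref{wild} applies to $H$.

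For $(1)\Leftrightarrow(3)$, the inclusion $kG \hookrightarrow H$ gives $\GKdim kG \leq \GKdim H$, while the tensor-product structure $H \cong U(\mathfrak{t}) \otimes kG$ as $k$-modules gives $\GKdim H \leq \dim_k \mathfrak{t} + \GKdim kG$. Since $\dim_k \mathfrak{t} < \infty$, finiteness of $\GKdim H$ is equivalent to finiteness of $\GKdim kG$, which by Bass--Guivarc'h--Gromov is equivalent to $G$ being nilpotent-by-finite. The implication $(1)\Rightarrow(2)$ is the cocommutative case of Conjecture~\ref{wild} already handled by \cite[Theorem~1.4]{BL}.

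The new step is $(2)\Rightarrow(3)$, argued by contraposition. Suppose $G$ is not nilpotent-by-finite; since $k$ has characteristic $0$ it is not absolute, so Theorem~\ref{main}(i) produces a prime $Q_0 \triangleleft kG$ which is primitive, rational, and not locally closed in $\mathrm{Spec}(kG)$. Let $\pi\colon H \twoheadrightarrow H/H\mathfrak{t} \cong kG$ be the standard Hopf surjection and set $P := \pi^{-1}(Q_0)$. Then $H/P \cong kG/Q_0$ as algebras, so $P$ is prime, primitive (a faithful simple $kG/Q_0$-module remains a faithful simple $H/P$-module), and rational (same Goldie quotient ring, hence same centre). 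For local closure, every prime $Q' \supsetneq Q_0$ of $kG$ pulls back to a prime $\pi^{-1}(Q') \supsetneq P$ of $H$, and therefore
$$ \bigcap \{P' \in \mathrm{Spec}(H) : P' \supsetneq P\} \;\subseteq\; \bigcap_{Q'\supsetneq Q_0}\pi^{-1}(Q') \;=\; \pi^{-1}\!\left(\bigcap_{Q'\supsetneq Q_0} Q'\right) \;=\; \pi^{-1}(Q_0) \;=\; P,$$
which shows $P$ is not locally closed in $\mathrm{Spec}(H)$ and hence $H$ fails the DME.

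The only non-routine ingredient is the lifting of non-local-closure through $\pi$; this succeeds because $\mathrm{Spec}(kG)$ embeds in $\mathrm{Spec}(H)$ as the closed set of primes containing the Hopf ideal $H\mathfrak{t}$, which on its own supplies enough strictly larger primes above $P$ to intersect back down to $P$.
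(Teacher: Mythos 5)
Your proposal is correct and takes essentially the same route as the paper: the Cartier--Gabriel--Kostant decomposition $H\cong U(\mathfrak{t})\# kG(H)$, noetherianity of the smash product, $(1)\Rightarrow(2)$ via Bell--Leung, $(2)\Rightarrow(3)$ by pulling the primitive, rational, non-locally-closed prime of $kG(H)$ supplied by Theorem~\ref{main}(i) back along the Hopf surjection $H\twoheadrightarrow kG(H)$, and $(3)\Rightarrow(1)$ by bounding the growth of the smash product. The paper merely leaves implicit the transfer of non-local-closedness that you spell out, and states only the direction $(3)\Rightarrow(1)$ of your $(1)\Leftrightarrow(3)$, which suffices to close the cycle of implications.
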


\begin{proof} If $k$ is algebraically closed then $H$ is pointed 
 \cite[$\S$5.6, p.~76]{Mo}. 
Thus the Cartier-Gabriel-Kostant structure theorem 
\cite[Corollary~5.6.4(iii) and Theorem~5.6.5]{M} 
applies in both cases and shows that $H$ is a smash 
product $U(\mathfrak{t})\# kG(H)$ where $G(H)$ acts by conjugation on $\mathfrak{t}$. By the Poincar\'e-Birkhoff-Witt
 Theorem and the Hilbert Basis Theorem for skew Laurent extensions \cite[Theorem~1.45, Proposition~1.7.14]{McCR} it follows that $H$
 is noetherian under our stated hypotheses on $G(H)$ and $\mathfrak{t}$. 
 We now show that the properties $(1), (2)$ and $(3)$ listed in Conjecture~\ref{wild} are equivalent for $H$. 

\medskip

\noindent $(1) \Longrightarrow (2)$: This is \cite[Theorem~1.3]{BL}.

\noindent $(2) \Longrightarrow (3)$: There is a homomorphism of Hopf algebras from $H$ onto $kG(H)$. Therefore, if $H$ satisfies the DME, so must $kG(H)$. Hence, by Theorem~\ref{main}, $G(H)$ is nilpotent-by-finite.

\noindent $(3) \Longrightarrow (1)$: Assume that $G(H)$ is nilpotent-by-finite, and note that it is finitely generated. Since $G(H)$ acts on $U(\mathfrak{t})$ by automorphisms of $\mathfrak{t}$, $H$ has finite GK-dimension by the argument used in \cite[proof of Corollary~3.4]{BL}.
\end{proof}

\begin{question} \label{too-wild} Does the analogue of Conjecture~\ref{wild} also hold for arbitrary noetherian Hopf algebras defined 
 over arbitrary fields? The authors suspect that the answer is ``No'' but know of no such examples. \end{question}
 
 When trying to answer Question~\ref{too-wild} it may be worthwhile initially to consider \emph{affine} noetherian Hopf algebras. The additional hypothesis is redundant for pointed Hopf algebras thanks to \cite[Theorem~1.5(3)]{GZ}. It is an open question whether all noetherian Hopf $k$-algebras are affine.
\bigskip

\subsection{Beyond Hopf}
We end the section by noting that there is no general relationship between finite Gelfand-Kirillov dimension and the Dixmier-Moeglin equivalence for finitely generated algebras that are not Hopf algebras. 
 
\begin{ex}\label{non-wild}
\emph{Let $k$ be a field and let $A$ be an affine noetherian $k$-algebra. Consider the statements:
\begin{enumerate}
\item[(1)] $\GKdim A<\infty$;
\item[(2)] $A$ satisfies the DME.
\end{enumerate}
Then the implications $(1)\Longrightarrow (2)$ and $(2)\Longrightarrow (1)$ are both false.} 

First, for each positive integer $n\ge 4$, an example of a finitely generated noetherian algebra of GK dimension $n$ that does not satisfy the DME is given in \cite[Theorem~9.1]{BLLM}.

Conversely, one can construct an example of a noetherian algebra of exponential growth that satisfies the DME as follows.
Let $q\in \mathbb{C}$ be transcendental and let $R$ be the skew Laurent ring $R = \mathbb{C}[x^{\pm 1}][y^{\pm 1};\tau]$ where $\tau(x)=qx$ (thus $yx=qxy$). Then $R$ is a simple ring by \cite[Example~1.8.6]{McCR}.
Observe that $(x^2y)(x^3y) = q (x^3y)(x^2y)$ and so we have a $\mathbb{C}$-algebra automorphism $\sigma$ of $R$ induced by $x\mapsto x^3y$, $y\mapsto x^2y$.
Then a simple exercise shows that $A: = R[z^{\pm 1};\sigma]$ has exponential growth since $z^n x z^{-n}= x^{\alpha(n)}y^{\beta(n)}$ where $\alpha(n)$ grows exponentially. As such, no power of $\sigma$ can be inner and $A$ is simple by \cite[Theorem~1.8.5]{McCR}. 

This immediately implies that $A$ satisfies the Dixmier-Moeglin equivalence. Indeed, the zero ideal $(0)$ is certainly locally closed and hence primitive. Moreover $(0)$ is rational since $A$ satisfies the Nullstellensatz (see \cite[Theorem~9.1.8]{McCR}). 

\end{ex}
%%%%%%%%%%%%%%%%%%%%%%

\section{Finiteness conditions for group algebras}\label{finiteness}

In this section we extend results of Bartholdi, Kielak, 
 Kropholler and Lorensen \cites{BK, KrL} to examine the relationship between (elementary) amenability of a group and the Goldie conditions on its group ring $kG$. 
 The details are given in Theorems~\ref{done} and~\ref{pdone}, for the cases where $k$ has characteristic zero and $p > 0$, respectively. Various applications are given at the end of the section. 
 
We begin with one of many equivalent definitions of the amenability condition on a group. A nice short survey of this topic can be found in \cite[$\S$8]{G2}; for a more detailed account, see \cite{Jus}.

\begin{definition}\label{amenable} Let $G$ be a group, $\euls{S}(G)$ the set of subsets of $G$.
\begin{enumerate}
\item[(i)] A \emph{finitely additive invariant probability measure on $G$} is a map $\mu : \euls{S}(G) \rightarrow [0,1]$ such that
\begin{itemize}
\item[(a)] $\mu (G) = 1$;
\item[(b)] for all subsets $A$ and $B$ of $G$ with $A \cap B = \emptyset$, $\mu(A \cup B) = \mu(A) + \mu (B)$;
\item[(c)] for all subsets $A$ of $G$ and all $g \in G$, $\mu(A) = \mu (gA) = \mu (Ag)$.
\end{itemize}
\item[(ii)] $G$ is \emph{amenable} if it has a finitely additive invariant probability measure.
\item[(iii)] The class of \emph{elementary amenable} groups is the smallest class of groups containing $\mathbb{Z}$ and all finite groups, and closed under taking subgroups, quotients, extensions, and directed unions (an alternative description is given in the proof of Lemma~\ref{elamenMax}).
\end{enumerate}
\end{definition}

As the name suggests, every elementary amenable group is amenable, but the converse is false as first demonstrated by the groups of intermediate growth constructed by Grigorchuk \cite{G3}. A criterion for amenability, key for us here, is the following result.

\begin{theorem}\label{small}{\rm(Kropholler-Lorensen, \cite[Theorem~A]{KrL})} Let $k$ be a field and $G$ a group. Then the following are equivalent:
\begin{enumerate}
\item[(i)] For every positive integer $n$ there does not exist an embedding of right $kG$-modules
\begin{equation}\label{into} kG^{\oplus (n+1)} \ \hookrightarrow \ kG^{\oplus n}.
\end{equation}
\item[(ii)] $G$ is amenable. \qed
\end{enumerate}
\end{theorem}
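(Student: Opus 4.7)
\smallskip

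\noindent\textbf{Proof plan.} The plan is to prove the two implications separately: (ii)$\Longrightarrow$(i) by a direct F\o{}lner/counting argument, and the reverse by contrapositive, converting a Tarski paradoxical decomposition of a non-amenable group into an embedding of free right $kG$-modules. The latter direction is the main obstacle.

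For (ii)$\Longrightarrow$(i), suppose there is an embedding $\phi\colon kG^{\oplus(n+1)}\hookrightarrow kG^{\oplus n}$ of right $kG$-modules. Such a map is given by left multiplication by some $n\times(n+1)$ matrix $A=(a_{ij})$ over $kG$. Let $S\subseteq G$ be a finite set containing the identity together with the supports of all entries $a_{ij}$. For any finite $F\subseteq G$, the restriction of $\phi$ sends the $(n+1)|F|$-dimensional $k$-subspace $k[F]^{\oplus(n+1)}\subseteq kG^{\oplus(n+1)}$ into $k[SF]^{\oplus n}$, whose $k$-dimension is $n|SF|$. Injectivity therefore forces
\[(n+1)|F|\;\le\;n|SF|,\qquad\text{so}\qquad |SF|/|F|\;\ge\;(n+1)/n.\]
Applying the F\o{}lner criterion for amenability to $S$ with $\varepsilon<1/n$ produces a finite $F$ with $|SF|/|F|<1+\varepsilon<(n+1)/n$, a contradiction.

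For (i)$\Longrightarrow$(ii) I would argue the contrapositive: assuming $G$ is non-amenable, construct an embedding $kG^{\oplus(n+1)}\hookrightarrow kG^{\oplus n}$ for some $n$. By Tarski's alternative, $G$ admits a paradoxical decomposition---pairwise disjoint subsets $A_1,\dots,A_p,B_1,\dots,B_q\subseteq G$ and elements $g_1,\dots,g_p,h_1,\dots,h_q\in G$ with
\[G\;=\;\bigsqcup_i A_i\,\sqcup\,\bigsqcup_j B_j\;=\;\bigsqcup_i g_iA_i\;=\;\bigsqcup_j h_jB_j.\]
The task is to encode this set-theoretic data as an $n\times(n+1)$ matrix $A$ over $kG$ (for some $n$, such as $n+1=p+q$) whose associated right $kG$-module map is injective, so that the two covering identities above translate into the $n$ independent $kG$-linear relations needed to cut the kernel down to zero. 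The piecewise translations witnessing paradoxicality are $k$-linear but not $kG$-linear, so they cannot serve as matrix entries directly; the main obstacle is to assemble the matrix out of the group elements $g_i, h_j$ themselves in such a way that the algebraic relation $Av=0$ faithfully reflects the disjointness of the translates $g_iA_i$ and $h_jB_j$. Verifying injectivity is then a combinatorial support analysis on $G$ that uses the paradoxical identities above in an essential way.
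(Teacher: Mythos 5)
First, note that the paper does not prove this statement at all: it is quoted verbatim from Kropholler--Lorensen \cite[Theorem~A]{KrL} with a \qed, so there is no internal proof to compare against. Judged on its own terms, your argument for (ii)$\Longrightarrow$(i) is correct and is the standard one: a right-module map $kG^{\oplus(n+1)}\to kG^{\oplus n}$ is left multiplication by a matrix with finitely supported entries, so it carries $k[F]^{\oplus(n+1)}$ into $k[SF]^{\oplus n}$, and a F\o{}lner set for $S$ with sufficiently small $\varepsilon$ (you need $\varepsilon$ small relative to $1/(n|S|)$, not just $1/n$, but that is cosmetic) contradicts injectivity. This direction is fine.

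The implication (i)$\Longrightarrow$(ii) is where the entire content of the theorem lies, and there you have a genuine gap rather than a proof: you state the task (``encode this set-theoretic data as an $n\times(n+1)$ matrix whose associated map is injective'') and then correctly identify why the obvious attempt fails --- the piecewise translations $v\mapsto g_i v$ restricted to $k[A_i]$ are $k$-linear but not $kG$-linear, since the subsets $A_i, B_j$ do not generate $kG$-submodules --- but you do not overcome that obstacle. There is no candidate matrix, no specification of $n$ beyond a guess, and no injectivity argument; ``a combinatorial support analysis that uses the paradoxical identities in an essential way'' is a placeholder for the theorem, not a step in its proof. Indeed, the known proofs do not proceed by directly transcribing a Tarski decomposition into a matrix over $kG$. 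Bartholdi's route (with Kielak's appendix, refined by Kropholler--Lorensen) first upgrades non-amenability, via the negation of the F\o{}lner condition together with Hall's marriage theorem, to two injections $\theta_1,\theta_2\colon G\to G$ with disjoint images and translation parts confined to a fixed finite set $S$, and then runs a genuinely nontrivial linear argument of Garden-of-Eden/Myhill type over the finite index set $S$ to manufacture the module embedding; the rank $n$ that comes out is governed by $|S|$, not by the number of pieces $p+q$ of a paradoxical decomposition. So your proposal proves the easy implication and leaves the hard one unproved.
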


Let $k$ be a field and $G$ a group. It is a more or less immediate consequence of Theorem~\ref{small} that if $kG$ is noetherian
 then $G$ is amenable. We shall refine this statement in different ways in the sequel; see for example the implications $(b)\Longrightarrow (a)$ of Theorems~\ref{done} and \ref{pdone} as well as Proposition~\ref{Krullprop}.

 In fact Theorem~\ref{small} also holds for strongly group-graded rings; see \cite[Theorem~A]{KrL} for the precise result. That work is in turn a refinement of arguments of Bartholdi \cite{BK}. A noteworthy aspect of the latter work is the following lovely result of Kielak 
 building on work of Tamari \cite{T}.

\begin{theorem}{\rm(Kielak, \cite[Theorem~A1]{BK})}\label{Ore} Let $k$ be a field and $G$ a group such that $kG$ is a domain. Then $G$ is amenable if and only if $kG$ is an Ore domain. \qed
\end{theorem}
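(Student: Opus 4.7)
The plan is to deduce both implications directly from Theorem~\ref{small}; given that result as a black box, the argument is essentially a two-step unwinding of definitions, plus one invocation of flatness of Ore localisation.

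For the ``only if'' direction (amenable $\Rightarrow$ Ore), I would argue by contradiction. Supposing the right Ore condition fails, there exist nonzero $a,b \in kG$ with $akG \cap bkG = 0$, so $akG + bkG = akG \oplus bkG \hookrightarrow kG$. Since $kG$ is a domain, left multiplication by $a$ (resp.\ $b$) is injective, so $akG \cong kG \cong bkG$ as right $kG$-modules, yielding an embedding $kG^{\oplus 2} \hookrightarrow kG$. This contradicts Theorem~\ref{small}(i) with $n=1$. The analogous argument for the left Ore condition follows from the same theorem applied via the anti-automorphism $g \mapsto g^{-1}$ of $kG$ (which preserves the amenability hypothesis on $G$ trivially and converts left modules to right modules).

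For the ``if'' direction (Ore $\Rightarrow$ amenable), let $Q = \fract(kG)$ be the classical ring of fractions, a skew field. Since $Q$ is obtained as an Ore localisation of $kG$, it is flat as a right $kG$-module. If an embedding $\varphi : kG^{\oplus(n+1)} \hookrightarrow kG^{\oplus n}$ existed for some $n$, tensoring over $kG$ with $Q$ on the left would produce an embedding $Q^{\oplus(n+1)} \hookrightarrow Q^{\oplus n}$ of left $Q$-modules, which is impossible by invariance of dimension over the skew field $Q$. Hence no such embedding exists for any $n$, and Theorem~\ref{small} yields that $G$ is amenable.

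There is really no main obstacle here once Theorem~\ref{small} is in hand: the only non-formal ingredients are flatness of the Ore localisation and the observation that a nonzero principal right ideal in a domain is free of rank one. The content of Kielak's theorem is thus concentrated in Theorem~\ref{small}; the present statement is the clean Ore-theoretic reformulation.
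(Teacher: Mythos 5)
Your argument is correct, but it is worth noting that the paper does not prove this statement at all: Theorem~\ref{Ore} is imported as a black box (Kielak's appendix to \cite{BK}), which is why it carries a \qed. What you have done instead is derive it formally from Theorem~\ref{small}, which the paper also imports from \cite{KrL}. That derivation is sound. For ``amenable $\Rightarrow$ Ore'': failure of the right Ore condition in a domain gives nonzero $a,b$ with $akG\cap bkG=0$, hence $kG^{\oplus 2}\cong akG\oplus bkG\hookrightarrow kG$, contradicting Theorem~\ref{small}(i) with $n=1$; this is essentially Tamari's old observation, and is also the mechanism behind the paper's own proof of $(b)\Rightarrow(a)$ in Theorem~\ref{done}. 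For ``Ore $\Rightarrow$ amenable'': flatness of the Ore localisation plus dimension invariance over the division ring $Q$ rules out every embedding \eqref{into}, so Theorem~\ref{small} applies. (An equivalent route, closer to the paper's style, is to note that an Ore domain has uniform dimension $1$ and quote the monotonicity argument of \cite[Corollary~2.2.10]{McCR} used in Theorem~\ref{done}.) Two small caveats: after tensoring a map of \emph{right} $kG$-modules with the $(kG,Q)$-bimodule $Q$ you get \emph{right} $Q$-modules, not left ones as you wrote --- harmless, especially as $kG\cong kG^{\mathrm{op}}$; and you should be aware that this is not how Kielak actually proves the theorem, since \cite{KrL} postdates and refines \cite{BK}, so your argument is an a posteriori reduction rather than a reconstruction of the original proof. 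Within the logical economy of this paper, though, where Theorem~\ref{small} is already assumed, your reduction is legitimate and clean.
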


In this section we will be particularly interested in what happens if we drop the domain hypothesis from the above result. For this we need the following definitions and results; see, for example, \cite[$\S$2.2, 2.3]{McCR}.

\begin{definition}\label{Goldie} Let $R$ be a ring and $M$ a right $R$-module.
\begin{enumerate}
\item[(i)] $M$ has \emph{finite uniform dimension} if it contains no infinite direct sum of non-zero submodules. In this case every maximal such direct sum contains the same number of summands, called the \emph{uniform dimension} of $M$, denoted $\udim M $; otherwise, we write $\udim M = \infty$.
\item[(ii)] $R$ is \emph{right Goldie} if $\udim R_R < \infty$ and $R$ has max-ra, the ascending chain condition on right annihilators of subsets of $R$.
\item[(iii)] The \emph{(right) singular ideal} of $R$ is $$\zeta (R)\ :=\ \{ r \in R \, : \, rE = 0, \, E \textit{ an essential right ideal}\}.$$ 
By \cite[\S2.2.4]{McCR}, $\zeta (R)$ is an ideal of $R$.
\item[(iv)] For an ideal $I$ of $R$ set 
$\euls{C}(I) := \, \{x \in R : x + I \textit{ not a zero divisor in } R/I\};$
in particular $\euls{C}(0)$ is the set of regular elements of $R$. 

\end{enumerate}
\end{definition}
By Goldie's Theorem (see Theorem~\ref{Goldiethm}, below) an Ore domain is the same as a Goldie domain. For a group ring $kG$, the right and left Goldie conditions are equivalent, simply because $kG\cong kG^{\rm op}$ via
 the antipode map $g \mapsto g^{-1}$ for $g \in G$. For the same reason, the right and left uniform dimensions of $kG$ are equal.
We also note the following obvious fact.

\begin{lemma}\label{udimgp} If $H $ be a subgroup of a group $G$ and $k$ is a field, then 
 $\mathrm{udim}(kH) \leq \mathrm{udim}(kG)$.
 \end{lemma}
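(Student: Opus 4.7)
The plan is to exploit the fact that $kG$ is a free $kH$-module, and lift an independent family of right ideals in $kH$ to an independent family of right ideals in $kG$ of the same size.

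More concretely, I would fix a set $T$ of right coset representatives for $H$ in $G$, so that $G = \bigsqcup_{t \in T} Ht$ and hence
\[
kG \;=\; \bigoplus_{t \in T} kH \cdot t
\]
as a free left $kH$-module. Now suppose $I_1, \ldots, I_n$ are non-zero right ideals of $kH$ whose sum in $kH$ is direct; it suffices (if $\udim(kG)$ is finite) to show that the right ideals $J_j := I_j \cdot kG$ of $kG$ are non-zero and have direct sum, so that $\udim(kG) \geq n$, and then let $n \to \udim(kH)$. Non-vanishing is clear since $I_j \subseteq J_j$.

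For directness, note that since $I_j \cdot kH \subseteq I_j$ we have $J_j = \sum_{t \in T} I_j\, t$. Suppose $\sum_{j=1}^n \alpha_j = 0$ with $\alpha_j \in J_j$, written uniquely in the free decomposition as $\alpha_j = \sum_{t \in T} i_{j,t}\, t$ with $i_{j,t} \in I_j$ (almost all zero). Then $\sum_t \bigl(\sum_j i_{j,t}\bigr)\, t = 0$, and uniqueness of the free-module presentation forces $\sum_j i_{j,t} = 0$ in $kH$ for every $t$. The independence of $I_1, \ldots, I_n$ inside $kH$ then gives $i_{j,t} = 0$ for all $j, t$, so each $\alpha_j = 0$, as required.

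There is really no serious obstacle; the only points of care are to use a left coset decomposition (so that $kG$ is free as a left $kH$-module and right multiplication by coset representatives commutes with the independence argument) and to formulate the conclusion so that it also covers the case $\udim(kH) = \infty$, in which case the same construction produces an infinite independent family of non-zero right ideals in $kG$ and hence $\udim(kG) = \infty$ too.
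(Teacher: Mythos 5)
Your proof is correct and is exactly the argument the paper intends: its one-line proof just says ``use the fact that $kG$ is a free left $kH$-module,'' and your coset decomposition $kG=\bigoplus_{t}kH\cdot t$ together with the coefficientwise independence argument is the standard way to fill in the details. No issues.
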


\begin{proof} Use the fact that $kG$ is a free left $kH$-module.\end{proof}

Since we need the details of Goldie's Theorem, we state the result here.

\begin{theorem}\label{Goldiethm}{\rm(Goldie, \cite[Theorem~2.3.6]{McCR})} Let $R$ be a ring. The following are equivalent.
\begin{enumerate}
\item[(a)] $R$ is semiprime right Goldie.
\item[(b)] $R$ is semiprime, $\zeta (R) = 0$ and $\udim R_R < \infty$.
\item[(c)] $R$ has a right ring of fractions $Q(R)$ with respect to $\euls{C}(0)$ (equivalently, $\euls{C}(0)$ satisfies the right Ore condition) and $Q(R)$ is semisimple artinian.
\end{enumerate}
Moreover $R$ is prime $\Longleftrightarrow$ $Q(R)$ is simple, and in this case $Q(R) \cong M_n(D)$, the ring of $n\times n$ matrices over a division ring $D$, with $n = \udim R$.\qed
\end{theorem}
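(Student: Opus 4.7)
The plan is to establish the cycle via $(a)\Leftrightarrow (b)$ and $(b)\Rightarrow (c)\Rightarrow (a)$, deducing the moreover clause afterwards from Wedderburn's theorem. The only extra content of $(b)$ over $(a)$ is that $\zeta(R)=0$. To prove this in a semiprime right Goldie ring, given $x\in\zeta(R)$ the ascending chain $\mathrm{r.ann}(x)\subseteq \mathrm{r.ann}(x^2)\subseteq\cdots$ stabilises at some $\mathrm{r.ann}(x^n)$ by max-ra. Since $\mathrm{r.ann}(x^n)$ is essential, if $x^n\neq 0$ then $x^nR\cap \mathrm{r.ann}(x^n)\neq 0$; picking $0\neq x^nr$ in the intersection gives $x^{2n}r=0$, hence $r\in \mathrm{r.ann}(x^{2n})=\mathrm{r.ann}(x^n)$, contradicting $x^nr\neq 0$. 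So $\zeta(R)$ is nil; by max-ra any nil ideal is nilpotent, so semiprimeness forces $\zeta(R)=0$.

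For $(b)\Rightarrow (c)$, the technical core is to show that every essential right ideal of $R$ contains an element $c$ with $\mathrm{r.ann}(c)=0$, and that any such $c$ is regular. The latter follows because $cR$ is essential when $\mathrm{r.ann}(c)=0$, and $\mathrm{l.ann}(c)\cdot cR=0$ places $\mathrm{l.ann}(c)$ in $\zeta(R)=0$. The former is proved inductively: given essential $E$, choose $u_1\in E$ with maximal right annihilator, then pick $u_{i+1}\in E\cap \mathrm{r.ann}(u_1+\cdots +u_i)$ with maximal annihilator, continuing until $E\cap \mathrm{r.ann}(u_1+\cdots +u_n)=0$. Finite uniform dimension forces termination, and then $c:=u_1+\cdots +u_n$ has zero right annihilator. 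The right Ore condition on $\euls{C}(0)$ then follows: for $a\in R$ and $c\in \euls{C}(0)$, the right ideal $\{r\in R:ar\in cR\}$ is essential (as $cR$ is), so contains a regular $c'$, giving $ac'=ca'$. Finally, the localisation $Q:=R\euls{C}(0)^{-1}$ has finite uniform dimension and trivial singular ideal inherited from $R$; every element of $Q$ whose right annihilator vanishes is a unit, so $Q$ is von Neumann regular, and a von Neumann regular ring of finite uniform dimension is semisimple artinian.

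For $(c)\Rightarrow (a)$, semiprimeness of $Q$ descends to $R$ since a nonzero nilpotent ideal of $R$ would produce a nonzero nilpotent right ideal in the semisimple $Q$; finite uniform dimension and max-ra for $R$ transfer from DCC on right annihilators in $Q$ via the essential embedding $R_R\hookrightarrow Q_R$. For the moreover clause, primeness of $R$ lifts to $Q$ because a nonzero ideal of $Q$ contracts to a nonzero ideal of $R$; a prime semisimple artinian ring is simple artinian, so Wedderburn gives $Q\cong M_n(D)$, and preservation of uniform dimension under localisation yields $\udim R_R=\udim Q_Q=n$.

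The principal obstacle is the regular-element construction at the start of $(b)\Rightarrow (c)$: producing a regular element inside every essential right ideal. This is the technical heart of Goldie's theorem and is the step that requires the simultaneous interaction of max-ra, finite uniform dimension, semiprimeness, and $\zeta(R)=0$; neither the Ore condition nor the invertibility of non-zero-divisors in $Q(R)$ can be established without it.
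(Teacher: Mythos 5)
The paper offers no proof of this statement: it is quoted from McConnell--Robson with a \qed, so the only meaningful comparison is with the standard proof of Goldie's theorem, which is what you are reconstructing. At the level of a sketch, most of your outline is the standard argument and is sound: the cycle $(a)\Rightarrow(b)\Rightarrow(c)\Rightarrow(a)$ closes (so the unproved implication $(b)\Rightarrow$ max-ra is covered), the nil-singular-ideal computation for $\zeta(R)=0$ is correct, and the passage from the regular-element lemma to the Ore condition and to the semisimplicity of $Q$ is the usual one.

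However, the step you yourself identify as the technical heart --- every essential right ideal contains an element with zero right annihilator --- is exactly where your argument has a genuine gap. Choosing $u_1\in E$ ``with maximal right annihilator'' does \emph{not} force $u_1R\cap \mathrm{r.ann}(u_1)=0$: in $R=M_2(k)$ the element $e_{12}$ has maximal right annihilator among nonzero elements, yet $e_{12}R=\mathrm{r.ann}(e_{12})$. Without that property the right ideals $u_1R, u_2R,\dots$ need not form a direct sum, so ``finite uniform dimension forces termination'' carries no force; indeed, running your recipe in $M_2(k)$ with $E=R$, one may take $u_1=e_{12}$ and then at every stage choose a rank-one $u_{i+1}\in \mathrm{r.ann}(u_1+\cdots+u_i)$ for which the partial sum stays singular, and the process never stops (every nonzero singular $2\times 2$ matrix has maximal right annihilator, so your prescription permits these choices). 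The standard repair: in a semiprime ring with max-ra no nonzero right ideal is nil, so $E\cap\bigcap_{j\le i}\mathrm{r.ann}(u_j)$, if nonzero, contains a non-nilpotent $a$; taking $m$ with $\mathrm{r.ann}(a^m)=\mathrm{r.ann}(a^{2m})$ gives $u_{i+1}:=a^m\ne 0$ with $u_{i+1}R\cap\mathrm{r.ann}(u_{i+1})=0$. With $u_{i+1}$ chosen in the \emph{intersection} of the earlier annihilators (not in $\mathrm{r.ann}(u_1+\cdots+u_i)$), left-multiplying a vanishing combination successively by $u_1,u_2,\dots$ shows $\sum_j u_jR$ is direct; termination after at most $\udim R$ steps follows, essentiality of $E$ gives $\bigcap_j\mathrm{r.ann}(u_j)=0$, and directness then yields $\mathrm{r.ann}(u_1+\cdots+u_n)=0$. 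Until this is fixed, neither the Ore condition nor the structure of $Q(R)$ is established.
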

 
Given a group $G$, define a field $k$ of characteristic $p \geq 0$ to be \emph{big enough for} $G$ if 
$k$ contains a primitive $|F|^{\mathrm{th}}$ root of 1 for every finite subgroup $F$ of $G$ of order coprime to $p$. 
Recall, also, that a $p'$-\emph{group} is a group that has no elements of order $p$
 (in both cases we allow the possibility that $p=0$). 
 
 \begin{theorem}\label{done} Let $G$ be a group and $k$ a field of characteristic $0$. 
Consider the following statements:
 \begin{enumerate}
\item[(a)] $G$ is amenable and there is a bound on the orders of finite subgroups of $G$. 
\item[(b)] $\udim kG < \infty$.
 \item[(c)] $kG$ is right Goldie.
 \item[(d)] $Q(kG)$ exists and is semisimple artinian.
\item[(e)] $G$ is elementary amenable and there is a bound on the orders of finite subgroups of~$G$.
\end{enumerate}
Then the following statements hold.
\begin{itemize}
\item[(i)] $\qquad \qquad (e) \Longrightarrow (d) \Longleftrightarrow (c) \Longleftrightarrow (b)\Longrightarrow (a),$
\\ where, for the second part of $(b) \Longrightarrow (a)$, assume in addition that $k$ is big enough for $G$.
 \item[(ii)] Assume that $(e)$ holds and that $G$ has no non-trivial finite normal subgroups. Then $Q(kG)$ is simple artinian and 
$$\udim kG \; = \; l.c.m.\{|F| \, : \, F \subseteq G, |F| < \infty \}.$$ 

 \end{itemize}
\end{theorem}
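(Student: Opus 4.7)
The plan is to take Goldie's theorem (Theorem~\ref{Goldiethm}) as the backbone for the equivalences $(b)\Leftrightarrow(c)\Leftrightarrow(d)$, to invoke the Kropholler--Linnell--Moody induction theorem for $(e)\Rightarrow(d)$, to combine Theorem~\ref{small} with Maschke's theorem for $(b)\Rightarrow(a)$, and to pair Connell's primeness criterion with Moody's induction theorem for part~(ii). For the easy equivalences, since $\cchar k = 0$ the group algebra $kG$ is automatically semiprime by \cite[Theorem~4.2.13]{P}; granted this, Goldie's theorem gives $(c)\Leftrightarrow(d)$ at once and $(c)\Rightarrow(b)$ is immediate from the definition of a Goldie ring. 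For the less trivial $(b)\Rightarrow(c)$ I would appeal to Passman's treatment of Goldie conditions for group algebras (alternatively, use that $J(kG) = 0$ in characteristic $0$, which rules out nonzero nil right ideals and, combined with semi-primeness and $\udim kG < \infty$, forces $\zeta(kG) = 0$, whence Theorem~\ref{Goldiethm} upgrades finite uniform dimension to the full Goldie property).

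For the deeper $(e)\Rightarrow(d)$ I would simply quote the Kropholler--Linnell--Moody induction theorem: if $G$ is elementary amenable with a bound on the orders of its finite subgroups and $\cchar k = 0$, then $\euls{C}(0)$ is a (two-sided) Ore set in $kG$ and the resulting ring of fractions $Q(kG)$ is semisimple artinian. For $(b)\Rightarrow(a)$, the amenability assertion is a contrapositive application of Theorem~\ref{small}: if $G$ is not amenable there is, for some $n \geq 1$, an embedding $kG^{\oplus(n+1)} \hookrightarrow kG^{\oplus n}$ of right $kG$-modules; setting $m := \udim kG$, additivity of uniform dimension would force $(n+1)m \leq nm$, so $m = 0$, contradicting $kG \neq 0$. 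For the bound on finite subgroup orders, with $k$ big enough for $G$ and $F \leq G$ any finite subgroup, Maschke's theorem and the big-enough hypothesis give $kF \cong \prod_i M_{n_i}(k)$ with $\sum_i n_i^2 = |F|$ and $\udim kF = \sum_i n_i$, so Lemma~\ref{udimgp} yields
\[
|F| \; = \; \sum_i n_i^2 \; \leq \; \Bigl(\sum_i n_i\Bigr)^{\!2} \; = \; (\udim kF)^2 \; \leq \; (\udim kG)^2,
\]
a uniform bound independent of $F$.

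For part~(ii), given $(e)$ and the hypothesis that $G$ has no non-trivial finite normal subgroups, Connell's theorem forces $kG$ to be prime, and part~(i) then gives $Q(kG) \cong M_n(D)$ simple artinian with $n = \udim kG$. For the $\mathrm{lcm}$ formula I would invoke Moody's induction theorem (as refined by Kropholler--Linnell), which expresses $K_0(Q(kG))$ rationally as generated by inductions from finite subgroups; tracking the resulting rank constraints on $M_n(D)$ pins down $n = \mathrm{lcm}\{|F| : F \leq G,\ |F| < \infty\}$. The main obstacle throughout is the Kropholler--Linnell--Moody machinery that drives $(e)\Rightarrow(d)$ and the $\mathrm{lcm}$ identity in (ii), including matching the precise hypotheses and translating the $K$-theoretic conclusion into the explicit uniform-dimension formula; the remaining steps reduce to standard applications of Goldie's theorem, Passman's semi-primeness theorem, Theorem~\ref{small}, and Maschke.
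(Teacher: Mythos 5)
Your proposal follows essentially the same route as the paper: Goldie's theorem for the equivalences among (b), (c), (d); the Kropholler--Linnell--Moody theorem for $(e)\Rightarrow(d)$ and for part (ii) (which the paper simply cites as \cite[Theorems~1.2 and 1.3]{KLM}, so your sketch of re-deriving the l.c.m.\ formula from Moody's induction theorem is more than is actually done there); and the Kropholler--Lorensen criterion plus Maschke/Artin--Wedderburn for $(b)\Rightarrow(a)$, where your inequality $|F|=\sum n_i^2\leq(\sum n_i)^2=(\udim kF)^2\leq(\udim kG)^2$ is a slightly cleaner packaging of the paper's unboundedness argument.

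The one step that needs attention is $(b)\Rightarrow(c)$. The load-bearing fact is that $\zeta(kG)=0$ in characteristic $0$; given that, semiprimeness and finite uniform dimension yield Goldie via Theorem~\ref{Goldiethm}(b)$\Rightarrow$(a), exactly as the paper does by citing Snider \cite[Theorem~4]{S}. Your primary suggestion (``Passman's treatment of Goldie conditions'') lands on this, but your parenthetical alternative does not: the implication ``$J(kG)=0$, $kG$ semiprime, $\udim kG<\infty$ $\Rightarrow$ $\zeta(kG)=0$'' is not a valid deduction. The singular ideal is not controlled by the Jacobson radical, and a semiprime ring of finite uniform dimension can have nonzero singular ideal (indeed, by Goldie's theorem any semiprime non-Goldie ring of finite uniform dimension must have $\zeta\neq 0$; e.g.\ a prime uniform ring with zero divisors always does). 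Vanishing of the singular ideal here is a genuine theorem about group algebras in characteristic $0$, not a formal consequence of semiprimitivity, so you should cite it rather than attempt to derive it.
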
 

\begin{proof} $(i)$ We will repeatedly use the facts that, as char$\,k=0$, \cite[Theorem~4.2.12]{P} implies that $kG$ is semiprime, while 
\cite[Theorem~4]{S} implies that $\zeta(kG) = \{0\}$. 
\medskip

\noindent $(e) \Longrightarrow (d)$: This is \cite[Theorem~1.2]{KLM}. 
\medskip

\noindent$(d)\Longleftrightarrow(c)$: This is immediate from $(c) \Longleftrightarrow (a)$ of Theorem~\ref{Goldiethm}.
\medskip

\noindent$(c) \Longrightarrow (b)$: See Definition~\ref{Goldie}$(ii)$.
\medskip

\noindent$(b) \Longrightarrow (c)$: As before, $kG$ is semiprime with $\zeta(kG) = \{0\}$. Now use Theorem~\ref{Goldiethm}.

\medskip
\noindent$(b)\Longrightarrow (a)$: Suppose that $\udim kG <\infty$. Then $\udim(kG^{\oplus n}) = n\cdot\udim kG$
 for every positive integer $n$, by \cite[Corollary~2.2.10(iv)]{McCR}. But uniform dimension is non-decreasing under inclusion of modules by \cite[Corollary~2.2.10(iii)]{McCR}, so no embedding of the form \eqref{into} can exist. Hence $G$ is amenable by Theorem~\ref{small}. 

 Suppose for a contradiction that $G$ has finite subgroups of unbounded orders. Let $F$ be a finite subgroup of $G$, and suppose that there are $t$ distinct 
simple $kF$-modules, with dimensions $n_i, \, 1 \leq i \leq t$. Since $k$ is big enough for $F$, Maschke's Theorem and the Artin-Wedderburn Theorem imply that 
\begin{equation}\label{add}\udim kF = \sum_{i=1}^t n_i \; \textit{ and } \; \sum_{i=1}^t n_i^2 = |F|.
\end{equation}
Since the second sums in \eqref{add} are, by hypothesis, unbounded as $F$ ranges through the finite subgroups of $G$, so also the first sums are unbounded. 
Thus, by Lemma~\ref{udimgp} $\udim kG = \infty$, contradicting $(b)$. This proves $(a)$.

 \bigskip
\noindent$(ii)$ This is \cite[Theorem~1.3]{KLM}. 
\end{proof}

\medskip

Let $N(R)$ denote the \emph{nilpotent radical} of a ring $R$. Recall that if $k$ is a field of characteristic $p > 0$ and $G$ is a group then $N(kG)= 0$ if and only if $G$ has no finite normal subgroup of order divisible by $p$ \cite[Theorem~4.2.13]{P}. 
Let $R$ be a ring for which $N(R)$ is nilpotent and $R/N(R)$ is right Goldie, and denote the semisimple artinian quotient ring of $R/N(R)$ by $Q$. As in \cite[\S4.1.2]{McCR}, the \emph{reduced rank} $\rho (M)$ of a right $R$-module $M$ is defined as follows. Take any chain $M = M_0 \supseteq M_1 \supseteq \cdots \supseteq M_n = 0$ of submodules of $M$ such that $M_iN(R) \subseteq M_{i+1}$ for each $i$ and set
$$ \rho(M) \; := \; \sum_{i=0}^{n-1}c\left((M_i/M_{i+1})\otimes_{R/N(R)} Q\right),$$
where $c(X)$ denotes the composition length of a $Q$-module $X$. The notation $\euls{C}(I)$, where $I$ is an ideal of a ring $R$, is explained at Definition~\ref{Goldie}(iv).

 \begin{theorem}\label{pdone} Let $G$ be a group and $k$ a field of characteristic $p > 0$. 
Consider the following statements:
 \begin{enumerate}
\item[(a)] $G$ is amenable and there is a bound on the orders of the finite $p'$-subgroups of $G$. 
\item[(b)] $\udim kG < \infty$.
\item[(c)] $kG/N(kG)$ is right Goldie, $N(kG)$ is nilpotent, $\rho(kG) < \infty$, and $\euls{C}(N(kG)) \subseteq \euls{C}(0)$.
\item[(d)] $Q(kG)$ exists and is artinian.
\item[(e)] $G$ is elementary amenable and there is a bound on the orders of finite subgroups of~$G$.
\end{enumerate}
Then the following hold.
\begin{itemize}
\item[(i)] $\qquad \qquad (e) \Longrightarrow (d) \Longleftrightarrow (c) \Longrightarrow (b)\Longrightarrow (a)$.
\\ where, for the second part of $(b) \Longrightarrow (a)$, assume in addition that $k$ is big enough for $G$.
 
 \item[(ii)] Assume that $(e)$ holds and $G$ has no non-trivial finite normal subgroups. Then $Q(kG)$ is simple artinian and 
$$\udim kG \; = \; l.c.m.\{|F| \, : \, F \subseteq G, |F| < \infty \}.$$
\end{itemize}
\end{theorem}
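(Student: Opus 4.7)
The plan is to mirror the proof of Theorem~\ref{done}, adjusting each step to positive characteristic. The key new feature is that $kG$ is no longer guaranteed to be semiprime: by \cite[Theorem~4.2.13]{P}, semiprimeness fails precisely when $G$ has a finite normal subgroup of order divisible by $p$. Consequently Goldie's Theorem must be replaced throughout by Small's theorem on orders in artinian rings, and the singular-ideal arguments used in Theorem~\ref{done} give way to arguments that track the nilpotent radical $N(kG)$ and the reduced rank $\rho(kG)$.

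The implication $(e)\Longrightarrow (d)$ is quoted directly from \cite[Theorem~1.2]{KLM}, whose statement applies in both characteristics. The equivalence $(c)\Longleftrightarrow (d)$ is then precisely Small's theorem (see \cite[Theorem~4.1.4]{McCR}): a ring $R$ has a right artinian classical quotient ring if and only if $N(R)$ is nilpotent, $R/N(R)$ is right Goldie, $\rho(R) < \infty$, and $\euls{C}(N(R))\subseteq\euls{C}(0)$. The implication $(d)\Longrightarrow (b)$ is standard: when $Q(kG)$ is artinian, a filtration/reduced-rank argument along the powers of $N(kG)$, together with Goldie's theorem applied to the semiprime quotient $kG/N(kG)$, yields $\udim kG < \infty$.

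For $(b)\Longrightarrow (a)$, the amenability of $G$ follows exactly as in Theorem~\ref{done}: finite $\udim kG$ together with \cite[Corollary~2.2.10]{McCR} blocks the embeddings \eqref{into}, so $G$ is amenable by Theorem~\ref{small}. For the bound on finite $p'$-subgroups (assuming $k$ is big enough), any finite $p'$-subgroup $F\subseteq G$ has $|F|$ invertible in $k$, so Maschke's theorem applies and the Artin--Wedderburn identities $\udim kF = \sum_i n_i$, $|F|=\sum_i n_i^2$ of the characteristic zero proof carry over verbatim; Lemma~\ref{udimgp} then forces $|F|$ to be bounded by a function of $\udim kG$. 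Crucially, only $p'$-subgroups admit such a Maschke argument, which is precisely the reason $(a)$ is weaker than its characteristic zero analogue in Theorem~\ref{done}.

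Part $(ii)$ should follow directly from \cite[Theorem~1.3]{KLM}, exactly as Theorem~\ref{done}$(ii)$ followed from the corresponding result in characteristic zero. The main obstacle I anticipate is the verification of the $(c)\Longleftrightarrow (d)$ step: one must check that the condition $\euls{C}(N(kG))\subseteq\euls{C}(0)$ is genuinely implied by $(d)$ and is not redundant given the other three hypotheses in $(c)$ --- a subtlety of Small's theorem that simply does not arise in the semiprime situation of Theorem~\ref{done}, where the corresponding reduction collapses to Goldie's theorem for $kG$ itself.
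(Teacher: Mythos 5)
Your proposal follows essentially the same route as the paper: $(e)\Longrightarrow(d)$ and part $(ii)$ are quoted from \cite[Theorems~1.2 and 1.3]{KLM}, $(c)\Longleftrightarrow(d)$ is Small's theorem, the uniform dimension is bounded via the reduced rank $\rho(kG)$ (the paper phrases this as $(c)\Longrightarrow(b)$, noting $\udim kG = \udim Q(kG) \leq \rho(kG)$), and $(b)\Longrightarrow(a)$ reuses the Maschke/Artin--Wedderburn count from Theorem~\ref{done} restricted to $p'$-subgroups. The obstacle you anticipate at $(c)\Longleftrightarrow(d)$ is not an issue: the cited form of Small's theorem already asserts the full equivalence, so nothing further needs checking there.
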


\begin{proof} $(i)$ $(e)\Longrightarrow (d)$ is again \cite[Theorem~1.2]{KLM}. 

\medskip
 \noindent$(d) \Longleftrightarrow (c)$: This is Small's Theorem, \cite[Theorem~4.1.4]{McCR}.

\medskip
 
\noindent$(c)\Longrightarrow (b)$: Assume that $(c)$ holds. By Small's Theorem, 
 $\rho(kG)$ is the composition length of $Q(kG)$ as a right $Q(kG)$-module. Since 
$$\udim(kG_{kG})\; = \; \udim(Q(kG)_{Q(kG)})\; = \; c\left(\socle(Q(kG)_{Q(kG)})\right),$$ 
 $\rho(kG)$ is an upper bound for $\udim kG $.
\medskip

\noindent$(b)\Longrightarrow(a)$: The argument used to prove $(b)\Longrightarrow(a)$ in Theorem~\ref{done}$(ii)$ also works here.

\medskip 

\noindent $(ii)$ This is again \cite[Theorem~1.3]{KLM}. 
\end{proof}

As a variant of $(d)\Longrightarrow (a)$ in the two theorems we have the following result. 

\begin{lemma}\label{ddone} Let $k$ be a field and $G$ a group such that $kG$ has max-ra (this holds, in particular, if (d) of Theorem~\ref{done} or \ref{pdone} holds). Then $G$ has no infinite locally finite subgroup.
\end{lemma}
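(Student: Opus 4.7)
The plan is to derive a contradiction with the max-ra hypothesis by exhibiting an explicit strictly ascending chain of right annihilators in $kG$ built from class sums of an ascending chain of finite subgroups inside the supposed infinite locally finite subgroup. First, if $H \leq G$ is infinite and locally finite, I would recursively build finite subgroups $F_1 \subsetneq F_2 \subsetneq \cdots$ of $H$: starting from any non-identity element, enlarge $F_i$ to $F_{i+1} := \langle F_i, h_{i+1}\rangle$ for any $h_{i+1} \in H \setminus F_i$; by local finiteness $F_{i+1}$ is still finite, and strict inclusion is automatic since $H$ is infinite.

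Next, form the class sums $\hat{F}_i := \sum_{g \in F_i} g \in kG$ and define
$$A_i \; := \; \mathrm{r.ann}_{kG}\bigl(\{\hat{F}_j : j \geq i\}\bigr) \; = \; \bigcap_{j \geq i}\mathrm{r.ann}_{kG}(\hat{F}_j).$$
Each $A_i$ is a right annihilator of a subset of $kG$, and by construction $A_1 \subseteq A_2 \subseteq \cdots$. To see the chain is strict, fix $i$, choose $f \in F_{i+1}\setminus F_i$, and set $x := f-1$. For every $j \geq i+1$, $f \in F_{i+1} \subseteq F_j$ gives $\hat{F}_j f = \hat{F}_j$, so $\hat{F}_j x = 0$ and hence $x \in A_{i+1}$. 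On the other hand, $f \notin F_i$ forces the cosets $F_i$ and $F_i f$ to be disjoint subsets of $G$; therefore $\hat{F}_i f$ and $\hat{F}_i$ have disjoint supports, so $\hat{F}_i x = \hat{F}_i f - \hat{F}_i \neq 0$, which shows $x \notin A_i$. This produces the strictly ascending chain $A_1 \subsetneq A_2 \subsetneq \cdots$ of right annihilators, contradicting max-ra.

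The parenthetical remark is immediate: if hypothesis (d) of Theorem~\ref{done} or Theorem~\ref{pdone} holds then $Q(kG)$ is artinian, so $kG$ is right Goldie (via Goldie's Theorem or Small's Theorem respectively), and right Goldie rings have max-ra by Definition~\ref{Goldie}(ii). There is no real obstacle in this argument; the only thing to get right is the direction of the chain (taking annihilators of the \emph{tails} $\{\hat{F}_j: j \geq i\}$ rather than the initial segments) so that enlarging $i$ weakens the constraint and yields an ascending, rather than descending, chain.
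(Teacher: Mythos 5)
Your argument is correct and is essentially the paper's proof: both build an ascending chain of finite subgroups $F_1\subsetneq F_2\subsetneq\cdots$ inside the locally finite subgroup and derive a strictly ascending chain of right annihilators from the class sums $\hat F_i=\sum_{f\in F_i}f$, violating max-ra. Your precaution of taking annihilators of the tails $\{\hat F_j: j\ge i\}$ is harmless but unnecessary: since $F_{i+1}$ is a disjoint union of left cosets $tF_i$, one has $\hat F_{i+1}=(\sum_t t)\hat F_i$, so $\mathrm{r.ann}_{kG}(\hat F_i)\subseteq \mathrm{r.ann}_{kG}(\hat F_{i+1})$ already holds and the single elements $\hat F_i$ suffice, as in the paper.
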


\begin{proof} If $G$ contains an infinite locally finite subgroup $L$, 
 one can construct an infinite ascending 
chain of right ideals of $kG$ generated by the augmentation ideals of an ascending chain of finite subgroups $F$ of $L$. 
These right ideals are all annihilator ideals, 
of the elements $\sum_{f \in F}f$, and so $kG$ fails to satisfy max-ra.
\end{proof}

\medskip
 Olshanskii's Tarski monsters, non-cyclic infinite groups with all proper subgroups infinite cyclic, respectively 
of prime order, are constructed in \cite{Ol01}, respectively in \cite{Ol1}, and are proved to be
 non-amenable in \cite[Theorem]{Ols}. The following corollary is thus immediate from $(b)\Longrightarrow (a)$ of Theorems ~\ref{done} and \ref{pdone}. It begs the question of whether $\udim kG = \infty$ for every infinite finitely generated torsion group $G$ and every field $k$.

\begin{corollary}\label{Tarski} 
Let $T$ be a Tarski monster and $k$ a field. Then $\udim kT =\infty.$ \qed
\end{corollary}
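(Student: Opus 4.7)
The plan is to argue by contradiction: suppose $\udim kT < \infty$. Then the hypothesis $(b)$ of Theorem~\ref{done} holds (if $\cchar k = 0$) or of Theorem~\ref{pdone} holds (if $\cchar k = p > 0$). In either case, the implication $(b) \Longrightarrow (a)$ yields, as its first conclusion, that $T$ is amenable. This first conclusion is drawn directly from Theorem~\ref{small} and does \emph{not} require the auxiliary hypothesis that $k$ be big enough for $T$ (that hypothesis is only needed to deduce the bound on orders of finite subgroups, which is the ``second part'' of $(a)$ that we do not need here).

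However, by the cited results of Olshanskii \cite{Ols}, every Tarski monster is non-amenable. This contradiction forces $\udim kT = \infty$.

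There is no real obstacle: once the implication $(b) \Longrightarrow (a)$ has been established in Theorems~\ref{done} and~\ref{pdone} (via the Kropholler--Lorensen criterion, Theorem~\ref{small}), the corollary is an immediate consequence of the fact that Tarski monsters form a known family of non-amenable groups. The only point to watch is the split into the characteristic $0$ and characteristic $p$ cases, which is handled by invoking the appropriate theorem; the ``big enough'' caveat in each statement is irrelevant since we extract only the amenability half of~$(a)$.
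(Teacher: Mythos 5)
Your proposal is correct and is essentially identical to the paper's own argument: the authors likewise deduce the corollary immediately from the non-amenability of Tarski monsters \cite{Ols} together with the implication $(b)\Longrightarrow(a)$ of Theorems~\ref{done} and~\ref{pdone}. Your observation that the ``big enough'' hypothesis is irrelevant because only the amenability half of $(a)$ is needed is accurate and consistent with how those implications are proved.
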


\medskip

For the rest of the section we consider 
 other possible implications amongst statements (a)--(e) of Theorems~\ref{done} and \ref{pdone} which are \emph{not} guaranteed by those theorems.
\medskip

We first claim that the implication $(d)\Longrightarrow (e)$ is false in all characteristics. Indeed, in \cite{Gr} Grigorchuk constructs a torsion-free group $G$ of intermediate growth which was shown in \cite[Theorem]{GM} to be right orderable. By \cite[Theorem~3.2]{Ch}, no elementary amenable group can have intermediate growth and hence $G$ is not elementary amenable.
 On the other hand, let $k$ be any field. Since $G$ is right orderable, $kG$ is a domain by \cite[Lemmas~13.1.7 and~13.1.9]{P}. If $kG$ is \emph{not} an Ore domain,
 then by \cite[Theorem]{J}, $kG$ contains a copy of the free $k$-algebra on 2 generators, and so $kG$ has exponential growth; a contradiction. Thus
 $kG$ is an Ore domain and (d) holds. In particular, by Theorem~\ref{Ore}, $G$ is amenable.

In effect, we have proved the following result, with Grigorchuk's example showing that the class of groups to which it applies is non-empty.

\begin{proposition}\label{caught2} Suppose that $G$ is a group of intermediate growth and $k$ a field such that $kG$ is a domain. Then $G$ is amenable but not elementary amenable.\qed
\end{proposition}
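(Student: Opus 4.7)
My plan is to prove the two conclusions of the proposition separately, using two independent ingredients: Chou's growth dichotomy for elementary amenable groups, and Kielak's theorem (Theorem~\ref{Ore}), which characterises amenability in terms of the Ore property for group algebras that are domains.

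For the negative half, I would invoke \cite[Theorem~3.2]{Ch} directly: a finitely generated elementary amenable group is either virtually nilpotent, and hence of polynomial growth by Gromov's theorem, or else contains a non-abelian free subsemigroup, and hence of exponential growth. Intermediate growth is therefore incompatible with elementary amenability, so $G$ is not elementary amenable. If $G$ is not a priori finitely generated, one first passes to a finitely generated subgroup realising the intermediate growth; since elementary amenability passes to subgroups, the same conclusion applies.

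For the positive half, the domain hypothesis lets Theorem~\ref{Ore} reduce amenability of $G$ to the assertion that $kG$ is right Ore. To check this, I would invoke the Jategaonkar-style dichotomy \cite[Theorem]{J}: a domain $kG$ either satisfies the right Ore condition or else contains a copy of the free $k$-algebra on two generators. In the latter case the Hilbert function of $kG$ with respect to the filtration induced by a finite generating set of $G$ grows at least as fast as $2^n$, so $kG$ has exponential growth. But this growth function coincides with the growth function of $G$, which is subexponential by hypothesis. The free subalgebra alternative is thus excluded, $kG$ is Ore, and Theorem~\ref{Ore} delivers amenability.

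The only step not essentially on the page already is the identification of the growth of $kG$ with that of $G$, which is routine: with respect to the standard word-length filtration, the $n$-th piece of $kG$ has $k$-dimension equal to the number of elements of $G$ of word length at most $n$. I do not expect any genuine obstacle here; the argument is a clean two-step assembly of Chou and Kielak, with the exponential-versus-intermediate-growth comparison doing the bridging work.
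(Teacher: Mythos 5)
Your proposal is correct and follows essentially the same route as the paper: Chou's dichotomy rules out elementary amenability, while the domain hypothesis plus Jategaonkar's free-subalgebra alternative shows $kG$ is Ore (since a free subalgebra would force exponential growth of $kG$, hence of $G$), whence Kielak's theorem gives amenability. The growth identification you flag is indeed the implicit bridge in the paper's argument as well.
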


\medskip
Observe that the implication $(b)\Longrightarrow(c)$ holds in Theorem~\ref{done}, but is not claimed in 
 the setting of Theorem~\ref{pdone}. In fact it is false in the latter case. For a counterexample, take a field $k$ of characteristic $p>0$, and let $G$ be the lamplighter group $C_p \wr C_{\infty}$, the restricted wreath product of $C_p$ by $C_{\infty}$. Thus $G \cong A \rtimes C_{\infty}$ where $A  = \Pi_{i \in \mathbb{Z}}\langle a_i \rangle$ is an infinite elementary abelian $p$-group, so that $kG \cong kA[X^{\pm 1}; \sigma]$ where $\sigma (a_i) = a_{i+1}$ for all $i$.  Then $kG$ is prime by \cite[Theorem~4.2.10]{P} and semisimple by \cite[Lemma~7.4.12]{P}. Now $kA$ is uniform since it is the direct limit of modular group algebras of finite $p$-groups, each of which has uniform dimension one since it is a scalar local Frobenius algebra. Since $kG$ is an Ore extension of $kA$ it is also uniform by \cite[Theorem~2.7]{Matczuk}. But $kG$ is obviously not Goldie, since $G$ contains an infinite locally finite subgroup and so fails to satisfy max-ra, by Lemma~\ref{ddone}. Thus Theorem~\ref{pdone}(c) fails for $kG$.
 
Another class of examples for which $(b)\Longrightarrow (c)$ fails is found by taking $k$ to have characteristic $p$ and $G$ to be an infinite locally finite $p$-group with no non-trivial finite normal subgroups. For instance $G = C_p \wr C_{p^{\infty}}$ is one such group. Then $kG$ is once again prime by \cite[Theorem~4.2.10]{P}, and uniform for the reason $kA$ was above. But in this case $kG$ is local rather than semisimple, since its augmentation ideal is nil.
\medskip
 
\medskip
We next turn to the implication $(a)\Longrightarrow (b)$: We do not know whether this is true, whatever the characteristic. We therefore ask: 

\begin{question}\label{amenudim} In the settings of Theorem~\ref{done} or \ref{pdone}, does $(a) \Longrightarrow (b)$? 
\end{question}

The following result shows that a positive answer to Question~\ref{amenudim} would imply confirmation of the Zero Divisor Conjecture in characteristic 0 for group algebras of torsion free amenable groups. So, if Question~\ref{amenudim} has a positive answer, it is probably very difficult.

\begin{proposition}\label{zerodivamen} Suppose that $H$ is a torsion free amenable group and $k$ is a field of characteristic 0 such that $kH$ is not a domain. Then there is a finitely generated torsion free amenable group $G$ such that $\udim kG = \infty$. In particular, $kG$ would give a counterexample to $(a)\Longrightarrow (b)$ 
in Theorem~\ref{done}.
\end{proposition}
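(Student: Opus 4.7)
The plan is to set $G := H \wr \mathbb{Z}$, the restricted wreath product, after first replacing $H$ by its finitely generated subgroup $\langle \mathrm{supp}(a) \cup \mathrm{supp}(b) \rangle$, where $ab = 0$ with $a, b \neq 0$; this replacement is harmless since subgroups of torsion-free amenable groups are torsion-free amenable. Writing $G = B \rtimes \langle t \rangle$ with $B = \bigoplus_{i \in \mathbb{Z}} H_i$ and $H_i := t^i H t^{-i}$, it is straightforward to verify that $G$ is finitely generated (by the generators of $H$ together with $t$), torsion-free (a torsion element $(b,t^n) \in B \rtimes \mathbb{Z}$ forces $t^n = 1$ and then $b^n = 1$), and amenable (amenability is closed under extensions and directed unions, and $B$ is a directed union of finite direct products of copies of $H$).

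To show $\udim kG = \infty$ it suffices, by Lemma~\ref{udimgp}, to treat the case $d := \udim kH < \infty$. Since $H$ is torsion-free, $kH$ is prime by Connell's theorem \cite[Theorem~4.2.10]{P}; because $\mathrm{char}(k) = 0$, $kH$ is semiprime by \cite[Theorem~4.2.12]{P} and $\zeta(kH) = 0$ by \cite[Theorem~4]{S}. Goldie's Theorem~\ref{Goldiethm} then gives $Q(kH) \cong M_d(D)$ for some division ring $D$. If $d = 1$, then $kH$ would embed in the domain $D$, contradicting $ab = 0$; hence $d \geq 2$. Fix $d$ independent nonzero right ideals $U_1, \ldots, U_d$ of $kH$, so that $\bigoplus_{i=1}^d U_i \hookrightarrow kH$.

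Finally, these independent ideals are propagated to $kG$. For each $n \geq 1$, the direct product $H^{\times n}$ sits inside $G$ as $H_1 \times \cdots \times H_n \leq B$, with $k[H^{\times n}] \cong kH^{\otimes n}$. Exactness of $\otimes_k$ together with the direct-sum decomposition $\bigl(\bigoplus_i U_i\bigr)^{\otimes n} = \bigoplus_{(i_1,\ldots,i_n)} U_{i_1} \otimes \cdots \otimes U_{i_n}$ shows that the $d^n$ subspaces $U_{i_1} \otimes \cdots \otimes U_{i_n}$ form an independent family of right ideals of $k[H^{\times n}]$. Because $H^{\times n} \leq G$, the algebra $kG$ is free, and thus flat, as a left $k[H^{\times n}]$-module, so the right ideals $(U_{i_1} \otimes \cdots \otimes U_{i_n}) \cdot kG$ remain independent in $kG$. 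Hence $\udim kG \geq d^n$ for every $n \geq 1$, so $\udim kG = \infty$. Since $G$ is torsion-free, condition (a) of Theorem~\ref{done} holds vacuously for the finite-subgroup-order bound, so $kG$ witnesses the failure of $(a) \Longrightarrow (b)$ there.

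The only mildly delicate step is the first: extracting from a single relation $ab = 0$ the fact that $\udim kH \geq 2$ (or $\infty$). This uses $\mathrm{char}(k)=0$ twice, via Passman's semiprimeness theorem and Snider's vanishing of the singular ideal, to reduce to Goldie's theorem; the subsequent tensor-and-flatness propagation is routine.
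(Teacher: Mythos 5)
Your proposal is correct and follows essentially the same route as the paper: both pass to a finitely generated subgroup, use Connell's theorem together with semiprimeness, vanishing of the singular ideal and Goldie's theorem to get $Q(kH)\cong M_d(D)$ with $d\geq 2$, and then take $G = H\wr C_\infty$ and propagate independent right ideals through the direct sum of copies of $H$. Your inline tensor-product argument for the independence of the $d^n$ ideals in $k[H^{\times n}]$, followed by the freeness of $kG$ over it, is exactly the content of the paper's Lemma~\ref{prod} combined with Lemma~\ref{udimgp}.
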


\begin{proof} Let $k$ and $H$ be as stated. We may assume without loss that $H$ is finitely generated, since the class of amenable groups is subgroup-closed, \cite[\S2.7,p.16]{Jus}. 
 If $\udim kH= \infty$ there is nothing to prove, so we may assume that 
\begin{equation}\label{count}\udim kH \; := t \; < \; \infty.
\end{equation}
Therefore, since $k$ has characteristic 0, $(b)\Longrightarrow(d)$ of Theorem~\ref{done} implies that the Goldie quotient ring $Q(kH)$ exists and is semisimple artinian. Since $H$ is torsion-free $kH$ is prime by \cite[Theorem~4.2.10]{P}, and so Theorem~\ref{Goldiethm} implies that $Q(kH)$ is simple artinian with $Q(kH) \cong M_t(D)$ for a division ring $D$, with $t$ as in \eqref{count}. But now observe that we must have $t > 1$, since by hypothesis $kH$ (and therefore also $Q(kH))$ are not domains.

Now let $G$ be the lamplighter group $H\wr C_{\infty}$. Thus $G$ is the extension $A \rtimes C_{\infty}$ of the direct sum $A$ of countably many copies of $H$ by $C_{\infty}$, so $G$ is still finitely generated and torsion free. Moreover $G$ is amenable because $H$ and $C_{\infty}$ are amenable, and the class of amenable groups is closed under extensions and direct limits \cite[\S2.7, p.17-18]{Jus}. Finally, as $t>1$, the following result implies that $\udim kA=\infty$ and then Lemma~\ref{udimgp} implies that $ \udim kG = \infty.$
\end{proof}

\begin{lemma}\label{prod} Let $k$ be a field and $P$ and $Q$ groups. Then
$$ \udim (k(P \times Q)) \ \geq \ (\udim kP)(\udim kQ).$$
\end{lemma}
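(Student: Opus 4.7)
The plan is to exploit the canonical isomorphism $k(P\times Q)\cong kP\otimes_k kQ$ and the fact that tensor product over a field is exact and distributes over direct sums.

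First, dispose of the trivial case. If $\udim kP = \infty$ or $\udim kQ = \infty$, then the right hand side is either $\infty$ or $0$ (with the usual conventions), and the inequality is either $\infty \geq \infty$ via Lemma~\ref{udimgp} applied to the embedding $P \hookrightarrow P\times Q$ (respectively $Q\hookrightarrow P\times Q$), or trivial. So we may assume $m:=\udim kP < \infty$ and $n:=\udim kQ < \infty$.

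Next, pick essential direct sums witnessing these uniform dimensions. Choose nonzero right ideals $I_1,\ldots,I_m$ of $kP$ such that the sum $I_1 \oplus \cdots \oplus I_m$ is direct inside $kP$, and similarly nonzero right ideals $J_1,\ldots,J_n$ of $kQ$ with $J_1\oplus \cdots \oplus J_n$ direct inside $kQ$. Via the isomorphism $k(P\times Q)\cong kP\otimes_k kQ$, each tensor product $I_i\otimes_k J_j$ is a nonzero right $k(P\times Q)$-submodule of $k(P\times Q)$ under the action $(a\otimes b)(c\otimes d)=ac\otimes bd$.

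The key step is then to show that the $mn$ submodules $\{I_i\otimes J_j : 1\leq i\leq m,\ 1\leq j\leq n\}$ form a direct sum. This follows because tensoring over the field $k$ is exact, and hence
\[
\sum_{i,j} I_i\otimes_k J_j \;=\; \Bigl(\bigoplus_{i=1}^m I_i\Bigr)\otimes_k \Bigl(\bigoplus_{j=1}^n J_j\Bigr) \;=\; \bigoplus_{i,j} I_i\otimes_k J_j
\]
as $k$-vector spaces, and hence also as right $k(P\times Q)$-modules. Therefore $\udim k(P\times Q)\geq mn=(\udim kP)(\udim kQ)$, as required. There is no serious obstacle here; the entire content is the exactness of tensor product over a field, which ensures that a direct sum of nonzero summands remains direct after tensoring.
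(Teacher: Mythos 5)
Your proof is correct and follows essentially the same route as the paper's: both pass through the isomorphism $k(P\times Q)\cong kP\otimes_k kQ$ and observe that tensoring direct sums of nonzero right ideals of the two factors yields a direct sum of nonzero right ideals of the tensor product algebra. You merely add the (correct) justification via exactness of $\otimes_k$ and the trivial infinite case, which the paper leaves implicit.
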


\begin{proof} There is an isomorphism of algebras $k(P \times Q) \cong kP \otimes_k kQ$. But if $U$ and $V$ are $k$-algebras and $\sum_{i} I_i$, respectively $\sum_{j} J_j$, are direct sums of non-zero right ideals of $U$, respectively $V$, then $\sum_{i,j} I_i \otimes_k J_j$ is a direct sum of non-zero right ideals of $U \otimes_k V$. 
\end{proof}

\medskip
\begin{remark}\label{bad-char-p} The argument used for Proposition~\ref{zerodivamen} 
does not work in characteristic $p>0$ since one could (in theory) have $\udim kH=1$ and then $\udim kG=1$ as well.
Certainly that sort of behaviour happens for the lamplighter groups discussed after Proposition~\ref{caught2}. \end{remark}
 
\bigskip
%%%%%%%%%%%%%%%%%%%%%%%%%%

\section{Noetherian group algebras}\label{noetherian} 

It is a familiar fact that if $k$ is a field and $G \in \mathcal{P}$, the class of polycyclic-by-finite groups, then $kG$ is a noetherian algebra. 
In this section we consider the converse; namely the famous old question (see e.g. \cite[Question~11.39]{Ko}):
\begin{equation}\label{noethqn} \textit{If the group algebra}\; kG\; \textit{is noetherian, is } G \in \mathcal{P}? 
\end{equation}

\noindent Suppose that $kG$ is a noetherian group algebra. Then, using the freeness of $kG$ as a $kH$-module for each subgroup $H$ of $G$ we can easily see that $G \in \mathrm{Max}$. Moreover, by the Kropholler-Lorensen criterion, Theorem~\ref{small}, $G$ is amenable. There are groups $G \in \mathrm{Max}$ which are not amenable, even torsion-free ones; for example the torsion-free Tarski monsters of Olshanskii \cite{Ol}. But at the time of writing it seems that the only known amenable groups
 with $\mathrm{Max}$ are the polycyclic-by-finite ones.
 
As for elementary amenable groups, the following result is undoubtedly well-known, but we include a proof for completeness.

\begin{lemma}\label{elamenMax} Let $G$ be an elementary amenable group satisfying the ascending chain condition on subgroups. Then
 $G$ is polycyclic-by-finite.
\end{lemma}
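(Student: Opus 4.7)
The plan is to use the standard transfinite construction of the class $\mathcal{EG}$ of elementary amenable groups due to Chou and induct on the level. Recall that $\mathcal{EG}$ is the smallest class containing $\mathbb{Z}$ and all finite groups and closed under taking subgroups, quotients, extensions, and directed unions; equivalently, one sets $\mathcal{EG}_0$ to be the class of finitely generated abelian-by-finite groups, $\mathcal{EG}_\alpha = \bigcup_{\beta<\alpha}\mathcal{EG}_\beta$ at a limit ordinal $\alpha$, and at a successor stage lets $\mathcal{EG}_{\alpha+1}$ consist of all directed unions of groups in $\mathcal{EG}_\alpha$ together with all extensions of one member of $\mathcal{EG}_\alpha$ by another. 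Then $\mathcal{EG} = \bigcup_\alpha \mathcal{EG}_\alpha$, and the class of each $G\in\mathcal{EG}$ is the least $\alpha$ for which $G\in\mathcal{EG}_\alpha$.

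The plan is to prove by transfinite induction on the class $\alpha$ that every $G\in\mathcal{EG}_\alpha$ with $G\in\mathrm{Max}$ belongs to $\mathcal{P}$. Throughout, one uses the two elementary closure properties of $\mathrm{Max}$: both subgroups and quotient groups of $G$ inherit $\mathrm{Max}$, as does any extension of a $\mathrm{Max}$-group by a $\mathrm{Max}$-group.

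For the base case, a $\mathrm{Max}$-group which is abelian-by-finite has a finitely generated abelian subgroup of finite index, hence lies in $\mathcal{P}$. The limit-ordinal case is vacuous. For the successor step, suppose $G\in\mathcal{EG}_{\alpha+1}\cap\mathrm{Max}$. If $G$ is a directed union $G = \bigcup_{i\in I} G_i$ of groups $G_i\in\mathcal{EG}_\alpha$, then the chain of the $G_i$ must terminate by $\mathrm{Max}$, so $G = G_i\in\mathcal{EG}_\alpha$ for some $i$, and the induction hypothesis applies. Otherwise $G$ sits in a short exact sequence
\[
1\longrightarrow N\longrightarrow G\longrightarrow Q\longrightarrow 1
\]
with $N,Q\in\mathcal{EG}_\alpha$; since $N\le G$ and $Q = G/N$ both inherit $\mathrm{Max}$, the inductive hypothesis gives $N,Q\in\mathcal{P}$, and an extension of a polycyclic-by-finite group by a polycyclic-by-finite group is again polycyclic-by-finite (a finite subnormal series of cyclic or finite subfactors of $N$ concatenates with one for $Q$, lifted to $G$).

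I do not anticipate a serious obstacle: the heart of the argument is just that $\mathrm{Max}$ kills the two ``infinitary'' closure operations—directed unions collapse, and extensions stay within $\mathcal{P}$ because both ends are already polycyclic-by-finite by induction. The only point to be careful about is the choice of base class for the hierarchy, which one should state explicitly so that the base case is immediate; using finitely generated abelian-by-finite (or even just $\mathbb{Z}$ plus finite groups together with one successor step) keeps it transparent.
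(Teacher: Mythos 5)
Your proof is correct and follows essentially the same route as the paper's: a transfinite induction up the Chou hierarchy for elementary amenable groups, using $\mathrm{Max}$ to collapse the directed-union step and to reduce the extension step to the fact that $\mathcal{P}$ is extension-closed. The only cosmetic difference is the choice of hierarchy (the paper uses the version $\mathcal{X}_{\alpha}=(\mathrm{L}\mathcal{X}_{\alpha-1})\mathcal{B}$ from Kropholler--Linnell--Moody, so its successor step is an extension by a finitely generated abelian-by-finite quotient, and the limit/local steps are handled by noting $G$ is finitely generated), but your variant is equivalent and the argument goes through.
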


\begin{proof} We use ideas from \cite[$\S$3]{KLM}. 
 If $\mathcal{X} $ and $ \mathcal{Y}$ are two classes of groups, let $ \mathrm{L}\mathcal{X}$ denote the class of those groups all of whose finitely 
generated subgroups are in $\mathcal{X}$ and write $\mathcal{X} \mathcal{Y}$ for the class of groups $G$ such that $G$ has a normal 
subgroup $H\in \mathcal{X}$ with $G/H\in\mathcal{Y}$.

The class $\mathcal{E}$ of elementary amenable groups can be constructed inductively as follows. Set
$\mathcal{X}_1=\mathcal{B}$ to
 be the class of finitely generated abelian-by-finite groups. Then set 
$\mathcal{X}_{\alpha} := (\mathrm{L}\mathcal{X}_{\alpha - 1})\mathcal{B} $ if $ \alpha $ is a successor ordinal, while 
$\mathcal{X}_{\alpha} := \bigcup_{\beta < \alpha} \mathcal{X}_{\beta} $ if $ \alpha$ is a limit ordinal. 
Then \cite[Lemma~3.1]{KLM} implies that $\mathcal{E} = \bigcup_{\alpha} \mathcal{X}_{\alpha}$.

Suppose that $G \in \mathcal{X}_{\alpha} \cap \mathrm{Max}$ and the result has been proved for all smaller ordinals. Since $G$ is finitely 
generated we may assume that $\alpha$ is not a limit ordinal. Thus $G$ has a normal subgroup $H$ such that $G/H \in \mathcal{B}$ and $H \in \mathcal{X}_{\alpha - 1}$. Since $H\in \mathrm{Max}$, $H\in \mathcal{P}$ by induction. On the other hand, $G/H$ is finitely generated and hence
 abelian-by-finite. Thus $G\in \mathcal{P}$.
 \end{proof}
\medskip

 The (right) Krull dimension of a ring $R$ or an $R$-module $M$ will be denoted by $\Kdim R $, resp. $\Kdim M$. Basic properties of the Krull dimension can be found in \cite[Chapter~6]{McCR}. As is proved in \cite[Lemma~6.2.3]{McCR}, every right noetherian ring $R$ admits a Krull dimension $\Kdim R$, although in general this can be an infinite ordinal. 
 
 Let's approach the question stated as \eqref{noethqn} by considering a perhaps easier special case, which we are foolhardy enough to formulate as a conjecture. For completeness we state also the restriction to torsion groups, which also remains open.

\begin{conjecture}\label{Krullqn} Let $k$ be a field and $G$ a group with $kG$ noetherian and $\Kdim kG < \infty$.
\begin{enumerate}
\item[(i)] $G \in \mathcal{P}$. 
\item[(ii)] If $G$ is a torsion group then $G$ is finite.
\end{enumerate}
\end{conjecture}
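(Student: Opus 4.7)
The plan is to induct on $n = \Kdim kG$, assembling constraints on a minimal counterexample from the earlier sections of the paper and then (hopefully) ruling out the extreme cases that survive.

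\textbf{Base case.} When $\Kdim kG = 0$ the noetherian ring $kG$ is artinian, and an artinian group algebra forces $G$ to be finite, hence in $\mathcal{P}$. This also handles part $(ii)$ at the bottom of the induction.

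\textbf{Inductive step.} Assume the result for all smaller values of the Krull dimension and let $G$ be a counterexample with $\Kdim kG = n \geq 1$ and $G \notin \mathcal{P}$. The first task is to assemble the structural constraints already proved in the paper. Using that $kG$ is a free right $kH$-module for every subgroup $H$ of $G$, chains of subgroups in $G$ lift to chains of right ideals in $kG$, so $G \in \mathrm{Max}$. Since $kG$ is noetherian it has finite uniform dimension and max-ra, so Theorem~\ref{small} forces $G$ to be amenable, Lemma~\ref{ddone} rules out infinite locally finite subgroups, and Theorem~\ref{done}(b)$\Longrightarrow$(a) (respectively, Theorem~\ref{pdone} in positive characteristic) yields a bound on the orders of the finite (respectively, finite $p'$-) subgroups of $G$. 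Were $G$ also elementary amenable, Lemma~\ref{elamenMax} would give $G \in \mathcal{P}$; so $G$ is amenable but not elementary amenable.

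\textbf{Reduction to just infinite or simple.} Suppose $G$ has a proper non-trivial normal subgroup $N$. Since $kG$ is free over $kN$ and $k(G/N)$ is a noetherian factor of $kG$, both $\Kdim kN$ and $\Kdim k(G/N)$ are at most $n$. By a careful analysis of the behaviour of Krull dimension along the short exact sequence $1 \to N \to G \to G/N \to 1$ --- using, for instance, that the augmentation ideal of $kN$ in $kG$ has positive deviation when $N$ is infinite --- one aims to strictly drop the Krull dimension on one of the two pieces, apply the inductive hypothesis to conclude $N,\, G/N \in \mathcal{P}$, and then use the extension-closure of $\mathcal{P}$ to contradict $G \notin \mathcal{P}$. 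Passing to a characteristic subgroup of finite index and invoking residual finiteness of polycyclic-by-finite quotients refines what remains to the dichotomy already isolated in Theorem~\ref{shortnoeth}(3): $G$ is just infinite, and either hereditarily just infinite or simple.

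\textbf{Main obstacle.} The crux, and the reason the conjecture remains open, is to rule out these just infinite and simple cases: here there is no proper non-trivial normal subgroup on which to pivot, so the inductive machinery stalls. A plausible line of attack is to combine the finite Krull dimension with Zalesskii-style information on primitive ideals, with recent structural results on amenable groups of intermediate growth, and with the bound on finite subgroup orders, aiming to show that a hereditarily just infinite or simple amenable group in $\mathrm{Max}$ with bounded finite subgroups and no infinite locally finite subgroups cannot support a group algebra of finite Krull dimension. For part $(ii)$, the torsion hypothesis already collides strongly with the absence of infinite locally finite subgroups provided by Lemma~\ref{ddone}, so one expects $(ii)$ to be the more tractable half; but closing the gap in the general case $(i)$ appears to be genuinely hard with present methods.
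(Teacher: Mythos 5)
The statement you were asked to prove is labelled a \emph{conjecture} in the paper, and the paper does not prove it: it only establishes constraints on a minimal counterexample (Proposition~\ref{mincrim} and Theorem~\ref{summary}). Your proposal is therefore not a proof, and you correctly say so yourself in your ``Main obstacle'' paragraph. The gap is exactly the one you name: once the reduction forces $\widehat{G}$ to be just infinite and either hereditarily just infinite or simple, there is no proper non-trivial normal subgroup left to pivot on, and neither you nor the authors have a way to exclude these cases. So the honest verdict is that your programme reconstructs the paper's own partial progress (the base case via artinian group algebras, $\mathrm{Max}$ and amenability via Theorem~\ref{small}, exclusion of elementary amenable groups via Lemma~\ref{elamenMax}, the bound on finite subgroups via Theorem~\ref{done}, and the just-infinite/hereditarily-just-infinite-or-simple trichotomy) but, like the paper, stops short of the conjecture.

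Two points where your sketch is looser than the paper's actual reduction are worth flagging. First, the strict drop in Krull dimension when passing to a proper infinite quotient is not automatic ``from positive deviation of the augmentation ideal'': the paper first arranges (by replacing $H$ with a suitable factor) that every proper quotient is in $\mathcal{P}$, then shows $\Delta^+(N)=1$ so that $kN$ is \emph{prime} by Connell's theorem, and only then invokes \cite[Proposition~6.3.11]{McCR}, which requires primeness to give $\Kdim k(M_1/T)<n$; the descent also needs the termination argument via Hirsch numbers $h(H/M_i)\geq i-1$ bounded by $n$. Second, your optimism about part $(ii)$ is misplaced: Lemma~\ref{ddone} only excludes infinite \emph{locally finite} subgroups, and an infinite torsion group in $\mathrm{Max}$ need not contain any (Tarski monsters are torsion, non-locally-finite, and satisfy $\mathrm{Max}$), so there is no ``collision'' and part $(ii)$ is open too --- the paper settles it only for $2$-groups in characteristic $2$ (Proposition~\ref{2-groups}), using Derakhshan--Wagner on skew linear unipotent groups, an ingredient absent from your sketch.
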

 
As a small piece of evidence in favour of part (ii) of the conjecture, we note that it does hold 
 for group algebras of $2$-groups in characteristic 2, even without the restriction on Krull dimension. 

\begin{proposition}\label{2-groups} Let $G$ be a torsion $2$-group such that $kG$ is noetherian for some field $k$ of characteristic $2$. Then $|G|<\infty.$
\end{proposition}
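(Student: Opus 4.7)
The plan is to establish $|G|<\infty$ by showing that the augmentation ideal $\omega:=\omega(kG)$ is nilpotent. Granted this, the noetherian hypothesis makes each $\omega^i/\omega^{i+1}$ finitely generated and hence finite-dimensional over $kG/\omega\cong k$, so the filtration $kG\supseteq \omega\supseteq \omega^2\supseteq\cdots\supseteq\omega^N=0$ exhibits $kG$ as finite-dimensional, whence $|G|<\infty$.

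First I would extract two consequences of $kG$ being noetherian. The augmentation ideal $\omega$ is finitely generated as a right ideal, and one can choose generators of the form $g_1-1,\ldots,g_m-1$ with $g_i\in G$. Setting $H=\langle g_1,\ldots,g_m\rangle$, the $kH$-bimodule projection $\pi:kG\to kH$, $\sum_{g}c_g g\mapsto \sum_{g\in H}c_g g$, applied to a putative relation $g-1=\sum(g_i-1)\alpha_i$ for $g\in G\setminus H$, yields $-1=\pi(g-1)\in\omega(kH)$, a contradiction; so $G=H$ is finitely generated. Further, $G\in\mathrm{Max}$ by the standard argument exploiting freeness of $kG$ over $kH'$ for subgroups $H'\le G$. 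Since $k$ has characteristic $2$ and $G$ is a torsion $2$-group, each generator $g_i-1$ of $\omega$ is nilpotent: $(g_i-1)^{2^{n_i}}=g_i^{2^{n_i}}-1=0$ where $|g_i|=2^{n_i}$.

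The core of the proof---and the step I expect to be the main obstacle---is to show that the two-sided ideal $\omega$ generated by these nilpotent elements is itself nil; Levitzki's theorem (nil one-sided ideals in a right noetherian ring are nilpotent) would then upgrade nil to nilpotent. I would attempt this by proving that $G$ is locally finite; combined with $G\in\mathrm{Max}$ this already delivers $|G|<\infty$. Consider the chain of characteristic subgroups $\Omega_n:=\langle g\in G:g^{2^n}=1\rangle$; by $\mathrm{Max}$ it stabilizes at some $\Omega_N$, and since every element of the torsion $2$-group $G$ lies in some $\Omega_m\subseteq\Omega_N$, we obtain $G=\Omega_N$. Combining this with amenability of $G$ (from Theorem~\ref{small}) and Zelmanov's positive solution of the restricted Burnside problem should force bounded exponent, and then finiteness, of $G$. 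The delicate point is bridging from ``$G$ is generated by elements of order dividing $2^N$'' to ``$G$ has exponent dividing $2^N$'' for a torsion $2$-group that is a priori only amenable, finitely generated, and satisfies $\mathrm{Max}$.
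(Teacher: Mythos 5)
Your reduction steps are fine: $G$ is finitely generated, $G\in\mathrm{Max}$, each $g_i-1$ is nilpotent since $(g-1)^{2^{n}}=g^{2^{n}}-1$ in characteristic $2$, and if the augmentation ideal were nilpotent the noetherian filtration argument would indeed force $\dim_k kG<\infty$. But the step you yourself flag as "the delicate point" is a genuine, unfilled gap, and it is the whole content of the proposition. Knowing that $G=\Omega_N$ only says $G$ is \emph{generated} by elements of order dividing $2^N$; a torsion $2$-group generated by involutions can perfectly well have unbounded exponent. Amenability buys you nothing here: the first Grigorchuk group is an infinite, finitely generated, residually finite, \emph{amenable} torsion $2$-group, so "finitely generated $+$ amenable $+$ torsion $2$-group" does not imply finiteness, and Zelmanov's theorem cannot be invoked without first establishing both bounded exponent and residual finiteness, neither of which you have. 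Moreover, proving that $\omega$ is nil is essentially equivalent to proving that $G$ is locally finite, which (given $\mathrm{Max}$) is equivalent to the conclusion itself; so the plan is circular at its core.

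The paper takes a ring-theoretic route that bypasses Burnside-type questions entirely. It passes to $R=kG/N(kG)$, which by Goldie's Theorem has a semisimple artinian quotient ring $Q(R)\cong\prod_i M_{n_i}(D_i)$ with the $D_i$ division rings of characteristic $2$. The induced homomorphisms $\phi_i\colon G\to GL_{n_i}(D_i)$ have unipotent images (again because $(g-1)^{2^{n}}=0$), and a theorem of Derakhshan and Wagner on skew linear unipotent groups \cite[Corollary~2]{DW} forces each $\phi_i(G)$ to be finite. Hence $R$ is a finite ring, and the finiteness of the factors $N^i/N^{i+1}$ (finitely generated modules over the finite ring $R$) together with nilpotence of $N$ gives $|G|<\infty$. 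If you want to complete a proof along your lines, you would need to import an input of comparable strength to the Derakhshan--Wagner theorem; the group-theoretic finiteness conditions you list do not suffice on their own.
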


\begin{proof} Since
 $k G $ is noetherian, clearly $G$ is finitely generated. Set $N=N(kG)$ for the nilpotent radical. 
 Then, by Goldie's Theorem (Theorem~\ref{Goldiethm})
 $R :=(kG)/N$ has semi-simple artinian quotient ring $Q(R)$. By the Artin-Wedderburn Theorem
$$Q(R)\ \cong \ \prod_{i=1}^s M_{n_i}(D_i)$$ where $n_1,\ldots ,n_s$ are positive integers, and $D_1,\ldots ,D_s$
 are division algebras of characteristic $2$. 
The map $G\to Q(R)$ induces a group homomorphism $\phi_i$ from $G$ to ${\rm GL}_{n_i}(D_i)$ for each $i $ and 
we set $G_i=\phi_i(G)$. By \cite[Corollary~2]{DW}, $|G_i|<\infty$ and hence the image of $G$ in 
$Q(R)$ is also finite. Thus $R$ is a finite dimensional $k$-algebra. But now, since every power of $N$ is finitely
 generated as a right ideal of $k G$, the factors $N^i/N^{i+1}$ are finitely generated $R$-modules and hence finite dimensional
 for every $i$. Since $N$ is nilpotent it follows that $G$ is finite.
\end{proof}

 Return now to the general Conjecture~\ref{Krullqn}. Restricting our focus to group algebras of finite Krull dimension makes sense for at least three reasons. 

First, the known noetherian group algebras have finite Krull dimension. Indeed, let $h(G)$ denote the Hirsch number of a 
group $G$, as defined in $\S$\ref{notation}. If $G \in \mathcal{P}$ then $\Kdim kG = h(G)$, \cite{Sm}, \cite[Proposition~6.6.1]{McCR}. 

Our second justification is the following result. 

\begin{proposition}\label{Krullprop} Let $k$ be a field and $G$ a group such that $\Kdim kG$ exists. Then $G$ is amenable.
\end{proposition}

\begin{proof} Suppose that $\Kdim kG$ exists. Then the right $kG$-module $kG$ has finite uniform 
dimension by \cite[Lemma~6.2.6]{McCR}. By $(b) \Longrightarrow (a)$ of Theorems~\ref{done} and \ref{pdone}, $G$ is amenable.
\end{proof}
 
Our third reason for invoking Krull dimension is practical: if $\Kdim kG = 0$ then $kG$ is artinian, and a group algebra $kG$ is 
artinian only if $G$ is finite \cite[Theorem~10.1.1]{P}. So we have a starting point for a proof by induction. The target of the rest of this
 section is thus Theorem~\ref{summary}, giving properties of a minimal counterexample $G$ to Conjecture~\ref{Krullqn}.

Recall that a group is \emph{just infinite} if it is infinite but all its proper factors are finite.

\begin{proposition}\label{mincrim} Let $k$ be a field. Let $n \in \mathbb{Z}_{>0}$ and suppose that every group $F$ such that $kF$ is noetherian with $\Kdim kF < n$ is in $\mathcal{P}$. Suppose that there exists a group $H \notin \mathcal{P}$ such that $kH$ is noetherian with 
$\Kdim kH = n$.

 Then there exists a just infinite group $G$ with these properties. Moreover $G$ is a subfactor of $H$ and $kG$ is prime.
\end{proposition}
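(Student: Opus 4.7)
The plan is a one-step construction. Since $kH$ is right noetherian, $H$ lies in $\mathrm{Max}$ and so satisfies ACC on normal subgroups. The family $\{N \triangleleft H : H/N \notin \mathcal{P}\}$ is non-empty (it contains $N = 1$, since $H \notin \mathcal{P}$), so it has a maximal member $N_0$. I set $G := H/N_0$; by maximality, every proper quotient of $G$ lies in $\mathcal{P}$.

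Three of the required properties follow at once from this construction. First, $G$ has no non-trivial finite normal subgroup $F$: otherwise the extension $G$ of $F \in \mathcal{P}$ by $G/F \in \mathcal{P}$ would itself lie in $\mathcal{P}$, contradicting $G \notin \mathcal{P}$; hence $kG$ is prime by Connell's theorem \cite[Theorem~4.2.10]{P}. Secondly, $kG$ is a quotient of $kH$, hence noetherian with $\Kdim kG \leq n$, and since $G \notin \mathcal{P}$ the minimality of $n$ forces $\Kdim kG = n$. Thirdly, $G$ is a subfactor of $H$ by construction.

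The crux is just-infiniteness, which I will establish by contradiction. Suppose some $M \triangleleft G$ with $M \neq 1$ has $|G/M| = \infty$. Maximality of $N_0$ gives $G/M \in \mathcal{P}$, and being infinite $G/M$ has Hirsch number $h := h(G/M) \geq 1$; moreover $M \notin \mathcal{P}$, else the extension $G$ would be in $\mathcal{P}$. Lifting $M$ to the unique $\tilde M \triangleleft H$ with $N_0 \subsetneq \tilde M$ and $\tilde M/N_0 = M$, one has $\tilde M \notin \mathcal{P}$ (its quotient $M$ is not) and $H/\tilde M \cong G/M$ of Hirsch number $h$. The ring $k\tilde M$ is noetherian because $kH$ is free, hence faithfully flat, as a right $k\tilde M$-module. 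Choosing a subnormal series from $\tilde M$ to $H$ refining the poly(cyclic/finite) structure of $H/\tilde M$, each finite factor leaves $\Kdim$ unchanged (the larger algebra being finitely generated free over the smaller) while each infinite cyclic factor gives a skew Laurent extension and raises $\Kdim$ by $1$ (cf.\ \cite[Proposition~6.5.4]{McCR}); iterating yields
\[ \Kdim kH \; = \; \Kdim k\tilde M + h. \]
Hence $\Kdim k\tilde M = n - h < n$, and by minimality of $n$ this forces $\tilde M \in \mathcal{P}$, contradicting $\tilde M \notin \mathcal{P}$. So $G$ is just infinite, completing the proof.

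The hardest ingredient is precisely the Krull dimension formula $\Kdim kH = \Kdim k\tilde M + h(H/\tilde M)$ for a normal subgroup $\tilde M$ of $H$ with polycyclic-by-finite quotient; granted it, the entire argument collapses to the single maximality-plus-minimality construction above.
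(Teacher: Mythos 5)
Your opening moves are sound and essentially agree with the paper's: passing to a quotient $G=H/N_0$ in which every proper quotient lies in $\mathcal{P}$, deducing primeness of $kG$ from the absence of nontrivial finite normal subgroups via Connell's theorem, and pinning down $\Kdim kG = n$ by minimality of $n$. The gap is exactly where you flag it: the formula $\Kdim kH = \Kdim k\tilde M + h$. The cited result for a skew Laurent extension $S=R[x^{\pm 1};\sigma]$ of a right noetherian ring gives only $\Kdim R \le \Kdim S \le \Kdim R+1$; the increase by exactly $1$ genuinely fails in general (for instance $k[t^{\pm 1}][x^{\pm 1};\sigma]$ with $\sigma(t)=qt$ and $q$ not a root of unity is a simple ring of Krull dimension $1$). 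The equality $\Kdim kH = h(H)$ for $H\in\mathcal{P}$ is P.~F.~Smith's theorem, whose proof uses polycyclicity of the base at every stage; for a base $k\tilde M$ with $\tilde M\notin\mathcal{P}$ nothing of the sort is known, and the inequality you actually need, $\Kdim k\tilde M \le n-h$, is precisely the direction that the general theory does not supply. Consequently your contradiction in the just-infiniteness step does not go through; note also that the paper never claims that $H/N_0$ itself is just infinite.

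The paper circumvents this by never estimating the Krull dimension of the group algebra of a normal \emph{subgroup} from above; it only uses the fact that for a prime noetherian ring $R$ and a nonzero ideal $I$ one has $\Kdim (R/I) < \Kdim R$ \cite[Proposition~6.3.11]{McCR}, applied to augmentation ideals. Concretely: having arranged that every proper quotient of $H$ is in $\mathcal{P}$ and that $kN$ is prime for every nontrivial $N\triangleleft H$, the paper takes a normal subgroup $M_1$ of infinite index which fails to be just infinite, and from a witness $T\triangleleft M_1$ of infinite index manufactures a \emph{smaller} normal subgroup $M_2\triangleleft H$ (via intersecting the finitely many conjugates of a suitable $L$ and passing to a derived subgroup) with $H/M_2\in\mathcal{P}$ and $h(H/M_2) > h(H/M_1)$. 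Since $h(H/M_i)=\Kdim k(H/M_i)<n$, this descent must stop, and the terminal $M_t$ is the desired just infinite subfactor. If you wish to rescue your one-step construction you would have to prove the relative Krull dimension formula for a non-polycyclic normal subgroup with polycyclic-by-finite quotient, which appears to be out of reach with current techniques.
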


\begin{proof} Let $H$ be as stated in the proposition. Since $H \in \mathrm{Max}$ and $\Kdim\overline{R} \leq \Kdim R$ for all factor rings $\overline{R}$ of a noetherian ring $R$, we can replace $H$ by a proper factor if necessary so that
\begin{equation}\label{key} \textit{ every proper quotient of $H$ is in }\mathcal{P}.
\end{equation}
Next we prove that
\begin{equation}\label{prime} \textit{if } 1 \neq N \triangleleft H \textit{ then } H/N \in \mathcal{P} \textit{ and } kN \textit{ is prime.}
\end{equation}
The first claim in \eqref{prime} follows from \eqref{key}. For the second, recall the torsion FC subgroup $\Delta^+(N)$ as defined in $\S$\ref{notation}. 
Since $H \in \mathrm{Max}$, so is $\Delta^+(N)$. Thus, as $\Delta^+(N)$ locally finite, it is actually finite.
 Moreover, since $\Delta^+(N)$ is characteristic in $N$, it is normal in $H$. 
If $\Delta^+(N) \neq 1$ then $H/\Delta^+(N) \in \mathcal{P}$ by \eqref{key}, whence $H \in \mathcal{P}$. This contradicts our starting 
hypothesis. So $\Delta^+(N) = 1$ and hence $kN$ is prime by Connell's Theorem \cite[Theorem~4.2.10]{P}.

Now let $1 \neq M_1 \triangleleft H$ with $|H : M_1| = \infty$. Since $kH$ is a free $kM_1$-module, $kM_1$ is noetherian with 
$\Kdim kM_1 \leq n$. If $\Kdim kM_1< n$ then $M_1 \in \mathcal{P}$ by our choice of $n$, and therefore so is $H$ by \eqref{key}. As this is a contradiction,
\begin{equation}\label{big} \Kdim kM_1 = n.
\end{equation} 
Suppose that there exists a subgroup $T$ of $M_1$ with $1 \neq T \triangleleft M_1$ and $|M_1 : T| = \infty$. Since $kM_1$ is prime 
by \eqref{prime} and $k(M_1 /T) \cong kM_1/\mathfrak{t}kM_1$, where $\mathfrak{t}$ is the augmentation ideal of $kT$, $\Kdim k(M_1/T) <n$ by \cite[Proposition~6.3.11]{McCR}. Thus $M_1/T \in \mathcal{P}$, again by the choice of $n$. 
Now $M_1/T$ is infinite and polycyclic-by-finite, hence it is poly-(infinite cyclic)-by-finite by \cite[Lemma~10.2.5]{P}. Thus we can choose 
normal subgroups $K$ and $L$ of $M_1$, with 
$$ T \subseteq K \subset L \subseteq M_1, \ \ |M_1/L| < \infty, \ \ L/K \textit{ infinite abelian.}$$
Since $kM_1$ is noetherian $M_1$ is finitely generated. Since there are only finitely many homomorphisms from a fixed finitely generated group
 onto a given finite group, there are only finitely many normal subgroups $L_i$ of $M_1$ with $M_1/L_i \cong M_1/L$. In particular, there are
 only finitely many $H$-conjugates of $L$, all of them being in $M_1$ since $M_1 \triangleleft H$. List these as $L = L_1, \ldots , L_r$ and
 define $ \widehat{L} \; := \; \bigcap_{i = 1}^r L_i. $
Therefore 
$$ K \cap \widehat{L} \; \subset \; \widehat{L} \; \subseteq \; M_1, $$
with $\widehat{L} \triangleleft H$ and $M_1/\widehat{L}$ finite. Furthermore, $ \widehat{L}/K \cap \widehat{L} \; \cong \; K\widehat{L}/K $
is infinite abelian, since $L/K$ is infinite abelian and $L/K\widehat{L}$ is finite. Define $M_2$ to be the derived subgroup $[\widehat{L}, \widehat{L}]$, characteristic in $\widehat{L}$, so that $M_2 \triangleleft H$. Moreover, 
\begin{equation}\label{snagged}H/M_2 \in \mathcal{P}
\end{equation}
since $H/M_1$ and $M_1/M_2$ are both in $\mathcal{P}$. We claim that
\begin{equation}\label{bigger} n \ > \ \Kdim k(H/M_2) \ =\ h(H/M_2) \ >\ h(H/M_1).
\end{equation}
For the first inequality, note that, as $kH$ is prime by \eqref{prime} and $M_2 \neq \{1\}$ by \eqref{snagged}, it follows from
\cite[Proposition~6.3.11(ii)]{McCR} that $\Kdim k(H/M_2)<\Kdim H =n$, as desired. The equality is supplied by \cite[Proposition~6.6.1]{McCR}, noting again \eqref{snagged}. Finally, since $M_1/M_2$ has $\widehat{L}/K\cap\widehat{L}$ as a subfactor, $|M_1 : M_2| = \infty$, and hence
$ h(H/M_2) > h(H/M_1)$. Thus \eqref{bigger} holds.
 
If $\Kdim kM_2 < n$ then our choice of $n$ coupled with \eqref{snagged} yields $H \in \mathcal{P}$, a contradiction. So $\Kdim kM_2= n$. 
Continuing in this way, if $M_2$ is \emph{not} just infinite, then we can proceed as above with $M_2$ in place of $M_1$, and so construct 
 a chain of normal subgroups of $H$,
$$ H \supset M_1 \supset M_2 \supset \cdots \supset M_i \supset \cdots ,$$
with $|M_i : M_{i+1}| = \infty$ and $H/M_i \in \mathcal{P}$ for all $i$. However, for all $i$, \cite[Proposition~6.6.1]{McCR} implies that
$$ \Kdim k(H/M_i) \; = \; h(H/M_i)\geq i-1.$$
 Since $n =\Kdim H \in \mathbb{Z}$ this process must terminate after finitely many steps, say at $M_t$. Then $kM_t$ is prime noetherian of Krull dimension $n$ with $M_t \notin \mathcal{P}$ and $M_t$ just infinite, as required.
\end{proof}
 
\begin{remark}\label{mincrim-rem} If $G$ satisfies the conclusions of Proposition~\ref{mincrim} we call $G$ a \emph{minimal criminal}. 
\end{remark}

\medskip
 
Much can be said about the structure of finitely generated just infinite groups: by a result of Grigorchuk \cite{ GJ}, building on seminal results
 of Wilson \cite{W}, they fall into a trichotomy. The version of this which we state here is quoted from \cite[Theorem~5.6]{BGS}. 
 A \emph{hereditarily just infinite group} is defined in \cite[Definition~5.5]{BGS} to be a residually finite group in which every subgroup
 of finite index is just infinite. We don't give the definition of a \emph{branch group} since we will rule out their occurrence in the
 present context; for that definition, see for example \cite[Definition~1.1]{BGS}. 

\begin{theorem}\label{trichotomy}{\rm (Grigorchuk)} Let $G$ be a finitely generated just infinite
group. Then exactly one of the following holds:
\begin{enumerate}
\item[(i)] $G$ is a branch group.
\item[(ii)] $G$ has a normal subgroup $H$ of finite index of the form
\begin{equation}\label{t-defn} H \; = \; L_1 \times \cdots \times L_t, 
\end{equation}
where the factors $L_i$ are copies of a group $L$, conjugation by $G$ transitively
permutes the factors $L_i$, and $L$ has exactly one of the following two properties:
\begin{enumerate}
\item[(a)] $L$ is hereditarily just infinite (in which case $G$ is residually finite);
\item[(b)] $L$ is simple (in which case $G$ is not residually finite).\qed 
\end{enumerate}
\end{enumerate}
\end{theorem}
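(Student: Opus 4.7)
The plan is to follow Wilson's lattice-theoretic approach to just infinite groups, from which Grigorchuk's refinement is derived. The starting observation is that in a just infinite group $G$ every non-trivial normal subgroup has finite index, so the usual normal subgroup lattice is rigid and the interesting structural information lives one level up, among subnormal subgroups. I would introduce Wilson's \emph{structure lattice} of $G$: equivalence classes of non-trivial subnormal subgroups under the relation that $U \sim V$ iff each lies inside a subgroup generated by finitely many $G$-conjugates of the other. One checks that this is a $G$-invariant modular lattice, and the central dichotomy is whether it admits an infinite descending chain (which turns out to be case (i)) or is finite (case (ii)).

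In the infinite case, one recursively extracts rigid stabilizers from the descending chain and assembles them into a $G$-invariant rooted tree on whose boundary $G$ acts faithfully and level-transitively with non-trivial rigid stabilizers at every level; this is precisely the branch group condition. In the finite case, choose a minimal non-trivial class represented by a subnormal subgroup $L$, and let $L_1 = L, L_2, \ldots, L_t$ be its distinct $G$-conjugates. The crucial claim is that these conjugates pairwise commute and intersect trivially, because both $[L_i, L_j]$ and $L_i \cap L_j$ are subnormal in $G$ and, by minimality, strictly smaller than $L_i$ in the structure lattice, hence forced to be trivial. Thus $H := L_1 \times \cdots \times L_t$ is a normal direct product on which $G$ acts transitively by permuting the factors; since $H$ is non-trivial and $G$ is just infinite, $|G:H|<\infty$, yielding the decomposition \eqref{t-defn}.

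The remaining step is the internal split of $L$. If $L$ is simple we are in case (ii)(b), and since any non-trivial normal subgroup of $G$ must contain at least one $L_i$ and hence all of $H$ by $G$-transitivity, the intersection of all finite-index subgroups of $G$ contains $H$, so $G$ is not residually finite. Otherwise $L$ has non-trivial proper normal subgroups; minimality of the class of $L$ in the structure lattice of $G$, applied inside finite-index subgroups of $L$ and their natural analogues of the structure lattice, forces each such normal subgroup to have finite index in its ambient group, giving the hereditarily just infinite condition (ii)(a), and residual finiteness of $G$ then follows from residual finiteness of $L$ through the direct product structure of $H$ and the finite-index embedding $H \leq G$. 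The main obstacle is the branch case: proving that an infinite structure lattice carries enough rigidity to reconstruct the tree-like action with non-trivial rigid stabilizers is the technical heart of the theorem, while the direct-product decomposition and the simple/hereditarily just infinite split of $L$ amount to careful lattice bookkeeping once the general framework is in place.
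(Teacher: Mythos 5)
First, note that the paper itself offers no proof of Theorem~\ref{trichotomy}: it is quoted verbatim (with a \qed) from \cite[Theorem~5.6]{BGS} as a known result of Grigorchuk building on Wilson \cite{W}, so there is no in-paper argument to compare yours against. Your outline is recognizably the standard Wilson--Grigorchuk structure-lattice proof, and the architecture is right: a dichotomy on the structure lattice, with the infinite case giving a branch action and the finite case giving the decomposition \eqref{t-defn} with $L$ either simple or hereditarily just infinite. But as a proof it has concrete gaps. The most basic one is that your equivalence relation is not the one that makes the argument work: the structure lattice (as the paper itself records just before Theorem~\ref{branch}) is the set of subnormal subgroups modulo \emph{commensurability}, $U\equiv V$ iff $U\cap V$ has finite index in both, ordered by the induced inclusion; with your ``each lies inside the subgroup generated by finitely many conjugates of the other'' relation, the notions of ``minimal class'' and ``strictly smaller'' do not behave as you subsequently use them. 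Second, with the correct lattice, minimality of the class of $L$ only forces $[L_i,L_j]$ and $L_i\cap L_j$ into the zero class, i.e.\ forces them to be \emph{finite} subnormal subgroups; you need the additional (true, but not free) fact that a finitely generated just infinite group has no non-trivial finite subnormal subgroups to upgrade this to triviality. Third, you tacitly assume that $L$ has only finitely many $G$-conjugates and that their product is normal of finite index; this is itself a key lemma of Wilson about subnormal subgroups of just infinite groups, not an automatic consequence of subnormality.

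Two further points. The hereditarily just infinite case needs more than you give it: the lattice argument shows every non-trivial \emph{subnormal} subgroup of $L$ has finite index, whereas the definition used in the paper requires every \emph{finite-index} subgroup of $L$ to be just infinite (plus residual finiteness of $L$); bridging that gap requires a separate core/intersection argument, since a finite-index subgroup need not be subnormal. Finally, you correctly identify the branch case as the technical heart, but you only assert it: extracting a level-transitive rooted-tree action with non-trivial rigid stabilizers from an infinite structure lattice is precisely where the real work of \cite{W} and \cite{GJ} lies, and nothing in your sketch constructs it. In short, this is an accurate roadmap of the literature proof rather than a proof; for the purposes of this paper (which merely cites the theorem) that is adequate, but the steps above would all need to be filled in to claim an independent argument.
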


We review these three possibilities for a minimal counterexample to Conjecture~\ref{Krullqn}. Regarding the first of them Bartholdi, Grigorchuk and Sunik \cite[Theorem~5.7]{BGS} record the following result of Wilson \cite{W}. For this, define an equivalence
relation on the set of subnormal subgroups of a group $G$ by setting $H\equiv K$ if $H \cap K$
has finite index both in $H$ and in $K$. The set $\mathcal{L}(G)$ of equivalence classes of subnormal subgroups,
ordered by the order induced by inclusion, forms a Boolean lattice called the \emph{structure lattice} of $G$.

\begin{theorem}\label{branch}{\rm (Wilson)} Let $G$ be a just infinite group. Then $G$ is a branch group if and only if it has
infinite structure lattice. Moreover, in such a case, the structure lattice is isomorphic to the lattice of closed and open
subsets of the Cantor set.\qed
\end{theorem}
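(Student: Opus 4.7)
The plan is to establish an explicit isomorphism between the structure lattice $\mathcal{L}(G)$ of a just infinite branch group $G$ and the Boolean algebra of clopen subsets of the boundary of the spherically homogeneous rooted tree on which $G$ acts, and conversely to reconstruct such a tree action whenever $\mathcal{L}(G)$ is infinite.

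For the forward direction, I would start from the definition of a branch group: $G$ acts faithfully and level-transitively on an infinite spherically homogeneous rooted tree $T$, with the rigid stabilizer $\mathrm{Rist}_G(v)$ of the subtree $T_v$ nontrivial for every vertex $v$, and the level rigid stabilizer $\mathrm{Rist}_G(n) = \prod_{|v|=n}\mathrm{Rist}_G(v)$ of finite index in $G$ for every $n$. Each $\mathrm{Rist}_G(v)$ is subnormal in $G$; for distinct vertices $v, w$ at the same level the subgroups $\mathrm{Rist}_G(v)$ and $\mathrm{Rist}_G(w)$ commute and meet trivially, so in $\mathcal{L}(G)$ their classes are disjoint. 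The key point is that at level $n$ the classes $\{[\mathrm{Rist}_G(v)] : |v|=n\}$ form the atoms of a finite Boolean subalgebra with top element $[G]$ (since their product has finite index), and refining from level $n$ to level $n+1$ splits each atom into the classes of its descendants. The direct limit of these finite Boolean subalgebras is naturally identified with the direct limit of the finite clopen algebras of cylinders in $\partial T$, and hence with the clopen algebra of the Cantor set $\partial T$. A residual argument, using the just-infinite hypothesis to show that every subnormal subgroup of $G$ is commensurable with a finite product of rigid vertex stabilizers, exhausts $\mathcal{L}(G)$ by this sublattice.

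For the converse, suppose $\mathcal{L}(G)$ is infinite. Because $G$ is just infinite, the normal closure of any nontrivial subnormal subgroup $H$ has finite index, so $H$ has only finitely many $G$-conjugates up to commensurability, and hence only finitely many atoms below it in $\mathcal{L}(G)$. One then constructs a refining sequence of finite $G$-invariant Boolean sublattices $\mathcal{L}_0 \subset \mathcal{L}_1 \subset \cdots$ whose union is $\mathcal{L}(G)$; the atoms of $\mathcal{L}_n$ are declared to be the vertices of level $n$ of a rooted tree $T$, with $v$ a child of $w$ when $[v] \le [w]$ in $\mathcal{L}(G)$. Infiniteness of $\mathcal{L}(G)$ forces $T$ to be infinite, spherical homogeneity follows from transitivity of the induced $G$-action on atoms at each level, and just-infiniteness converts the finite-index property of normal closures into the finite-index property of rigid level stabilizers. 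The boundary $\partial T$ is then a second-countable compact totally disconnected space without isolated points, and hence a Cantor set by Brouwer's characterization.

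The principal difficulty lies in the converse: extracting a compatible refining sequence of finite $G$-invariant Boolean sublattices whose atoms can be organized as the vertices of a spherically homogeneous tree, and verifying that the resulting $G$-action satisfies the geometric axioms of a branch group in the sense of \cite[Definition~1.1]{BGS} rather than merely producing some tree action. This requires delicate use of the just-infinite hypothesis to simultaneously control the cardinalities of $G$-orbits on atoms, secure transitivity at each level, and guarantee that rigid level stabilizers (not merely level stabilizers) have finite index.
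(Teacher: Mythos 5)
First, a point of order: the paper does not prove this statement. Theorem~\ref{branch} is quoted (via \cite[Theorem~5.7]{BGS}) as a result of Wilson \cite{W} and is used as a black box, so there is no internal proof to measure your argument against. Judged on its own terms, your proposal is an outline of the standard Wilson--Grigorchuk argument rather than a proof: the architecture is right, but the two places where you defer the work are exactly where the theorem lives.

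In the forward direction, the embedding of the clopen algebra of $\partial T$ into $\mathcal{L}(G)$ via classes of products of rigid vertex stabilizers is sound and does show that a just infinite branch group has infinite structure lattice. But the ``moreover'' clause needs surjectivity, i.e.\ your claim that every nontrivial subnormal subgroup is commensurable with a finite product of rigid vertex stabilizers. That claim is the substance of Wilson's analysis of basal subgroups (closely related to the fact that every nontrivial normal subgroup of a branch group contains the derived subgroup of some level rigid stabilizer); calling it ``a residual argument'' does not discharge it. In the converse direction the gaps are more serious. The step ``the normal closure of $H$ has finite index, so $H$ has only finitely many $G$-conjugates up to commensurability'' is a non sequitur: finiteness of the index of the normal closure says nothing a priori about the number of commensurability classes among the conjugates $H^g$, nor about the number of atoms of $\mathcal{L}(G)$ below $[H]$. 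Likewise, the existence of an exhausting refining sequence of finite $G$-invariant Boolean sublattices, level-transitivity of $G$ on their atoms, and---most delicately---the finite-index property of the \emph{rigid} level stabilizers of the reconstructed tree (as opposed to the ordinary level stabilizers) are all asserted rather than derived; the last of these is precisely what separates a branch action from a mere level-transitive tree action and is the hardest part of Wilson's proof. As written, this is a description of the known proof with its load-bearing lemmas omitted.
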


It is shown in \cite[p.~386]{W} that $\mathcal{L}(G)$ embeds into the lattice of subnormal subgroups of $G$. Thus, since our minimal 
criminal $G$ identified in Proposition~\ref{mincrim} is just infinite with the maximum condition on subgroups, its structure lattice 
$\mathcal{L}(G)$ satisfies ACC. On the other hand, the lattice of closed and open subsets of the Cantor set $C$ does not satisfy DCC, 
since for each $i\ge 1$, $C\cap [0,1/3^i]$ is both closed and open in $C$. But since the complement of a closed and open set is again closed 
and open, we then see that the poset of closed and open subsets of $C$ cannot satisfy ACC. In particular groups from case $(i)$ of 
Theorem~\ref{trichotomy} are barred from being minimal counterexamples to Conjecture~\ref{Krullqn}.

Turn now to a minimal counterexample $ G$ satisfying $(ii)$ of Theorem~\ref{trichotomy}. So $G$ has the properties listed in Theorem ~\ref{trichotomy}$(ii)$, with $kG$ (right) noetherian and $\Kdim kG = n$. Since $kG$ is a free left $kH$-module, $kH$ is also noetherian with $\Kdim kH \leq n$. If in fact $\Kdim kH < n$, then $H \in \mathcal{P}$ and hence also $G \in \mathcal{P}$, a contradiction. Note also that $\Delta^+(H) = \{1\}$, since $\Delta^+(H) \subseteq \Delta^+ (G)$ by definition and $\Delta^+ (G) = \{1\}$ since $G$ is just infinite. Therefore, by the above and Connell's theorem, \cite[Theorem~4.2.10]{P},
\begin{equation}\label{sort} kH \textit{ is prime noetherian with } \Kdim kH = n.
\end{equation}
Suppose, next, that $t>1$ in \eqref{t-defn}. Then $kL_1$ is isomorphic to a proper factor of $kH$ and so, by 
 \eqref{sort} and \cite[Proposition~6.3.11(ii)]{McCR}, $\Kdim kL_1<n$. By our inductive hypothesis this implies that $L_1\in \mathcal{P}$ and hence that $H\in \mathcal{P}$. Once again this is a contradiction and so $t$ must equal $1$. In other words, $H=L$ itself is 
 a minimal criminal. Thus we can replace $G$ by $L$ and assume that $G$ satisfies one of parts $(ii)(a)$ or $(ii)(b)$ from Theorem~\ref{trichotomy}.

Summing up, a minimal counterexample to Conjecture~\ref{Krullqn} has the following properties.

\begin{theorem}\label{summary} Let $k$ be a fixed field. Let $G$ be a group and $n$ a positive integer such that $kG$ is noetherian
 with $\Kdim kG = n$, and suppose that $G \notin \mathcal{P}$. Assume that if $H$ is any group with $kH$ noetherian and
 $\Kdim kH < n$ then $H \in \mathcal{P}$. Then there exists a subfactor $\widehat{G}$ of $G$ with the following properties.
\begin{enumerate}
\item[(i)] $k\widehat{G}$ is prime noetherian with $\Kdim k\widehat{G} = n$.
\item[(ii)] $\widehat{G}$ is amenable.
\item[(iii)] $\widehat{G} \notin \mathcal{P}$, in fact $\widehat{G}$ is not elementary amenable.
\item[(iv)] $\widehat{G} \in \mathrm{Max}$ and, in particular $\widehat{G}$ is finitely generated.
\item[(v)] $\widehat{G}$ is just infinite, and is either $(a)$ hereditarily just infinite and so residually finite, or $(b)$ simple.
\item[(vi)] There exists a division ring $D$ with centre $k$ such that $\widehat{G} \subset GL_t (D)$ for some $t\geq 1$. 
\item[(vii)] Assume that $\cchar k=0 $ and that $k$ contains primitive roots of unity of all orders. Then there is a bound on the orders of the finite subgroups of $\widehat{G}$. If $\widehat{G}$ is not simple, then $\widehat{G}$ has no infinite torsion subgroups.
\item[(viii)] Assume that $\cchar k=0 $. Let $H$ be a subgroup of $\widehat{G}$ with $|H| = \infty = |\widehat{G} \, : \, H|$. Then $N_{\widehat{G}}(H) / H \in \mathcal{P}$ with $h(N_{\widehat{G}} (H)/H) < n$ and $|\widehat{G} \, : \, N_{\widehat{G}}(H)| = \infty$.
\end{enumerate}
\end{theorem}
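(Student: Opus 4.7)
The plan is to build $\widehat{G}$ from $G$ by two successive reductions, then derive properties (i)--(viii) by combining results already in the section with a handful of ancillary arguments. The main difficulty will be property (viii), and in particular the claim $|\widehat{G} : N_{\widehat{G}}(H)| = \infty$. First, apply Proposition~\ref{mincrim} to $G$ to extract a just infinite subfactor $G_1$ with $kG_1$ prime noetherian of Krull dimension $n$. Then apply Grigorchuk's trichotomy (Theorem~\ref{trichotomy}) to $G_1$. The branch-group case is ruled out by the discussion immediately following Wilson's Theorem~\ref{branch}: since $G_1 \in \mathrm{Max}$ its structure lattice satisfies ACC, whereas the clopen lattice of the Cantor set does not. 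Hence $G_1$ contains $H = L_1 \times \cdots \times L_t$ as in \eqref{t-defn}, and the reduction to $t=1$ proceeds as in the paragraph preceding the theorem: $kL_1$ is a proper factor of the prime noetherian $kH$, so $\Kdim kL_1 < n$ by \cite[Proposition~6.3.11]{McCR}, forcing $L_1 \in \mathcal{P}$ by minimality of $n$, and hence $H, G_1 \in \mathcal{P}$, a contradiction unless $t=1$. Setting $\widehat{G} := L$ yields (i) and (v).

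For (ii)--(iv) and (vi): property (iv) is immediate, since $k\widehat{G}$ noetherian forces $\widehat{G} \in \mathrm{Max}$ via freeness of $k\widehat{G}$ over $kK$ for $K \leq \widehat{G}$. Amenability (ii) is Corollary~\ref{Krullprop}. For (iii), if $\widehat{G}$ were elementary amenable then Lemma~\ref{elamenMax} combined with (iv) would place $\widehat{G}$ in $\mathcal{P}$. For (vi), just infiniteness forces $\Delta^+(\widehat{G}) = 1$, so $k\widehat{G}$ is prime by Connell's theorem; Goldie's theorem (Theorem~\ref{Goldiethm}) gives $Q(k\widehat{G}) \cong M_t(D)$ for some division ring $D$, and the canonical map $\widehat{G} \hookrightarrow Q(k\widehat{G})^{\times} = GL_t(D)$ supplies the embedding. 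To obtain $Z(D) = k$, first verify that $\Delta(\widehat{G}) = 1$: if not, just infiniteness forces $|\widehat{G} : \Delta(\widehat{G})| < \infty$, so $\Delta(\widehat{G})$ is a finitely generated FC group, hence virtually abelian by \cite[Lemma~4.1.5]{P}; this places $\widehat{G}$ in $\mathcal{P}$, a contradiction. Then Formanek's theorem \cite[Theorem~4.5.8]{P} yields $Z(Q(k\widehat{G})) = k$.

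For (vii), with $\cchar k = 0$ and $k$ containing primitive roots of unity of all orders, the field is big enough for $\widehat{G}$. From $\Kdim k\widehat{G} = n$ and \cite[Lemma~6.2.6]{McCR} we obtain $\udim k\widehat{G} < \infty$, and Theorem~\ref{done}(i) then produces a bound $N$ on the orders of finite subgroups of $\widehat{G}$; in particular every element has order at most $N$. In the hereditarily just infinite (non-simple) case $\widehat{G}$ is residually finite, and any infinite torsion subgroup would be finitely generated (Max), residually finite, and of exponent at most $N!$. By Zelmanov's solution of the restricted Burnside problem such a group is finite, yielding the second claim of (vii).

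Property (viii) is the main obstacle. Given $H \leq \widehat{G}$ with $|H| = |\widehat{G}/H| = \infty$ and $N := N_{\widehat{G}}(H)$, the plan is (a) to show $kN$ is prime by arguing $\Delta^+(N) = 1$, using (vii) together with a careful analysis of the finite characteristic subgroup $\Delta^+(N)$ inside the just infinite $\widehat{G}$; (b) to deduce from \cite[Proposition~6.3.11]{McCR} that
\[ \Kdim k(N/H) \; = \; \Kdim\bigl(kN / \mathfrak{h}kN\bigr) \; < \; \Kdim kN \; \leq \; n, \]
where $\mathfrak{h}$ is the augmentation ideal of $kH$; (c) to apply the minimality of $n$ together with \cite[Proposition~6.6.1]{McCR} to obtain $N/H \in \mathcal{P}$ with $h(N/H) = \Kdim k(N/H) < n$; and (d) to deduce $|\widehat{G}/N| = \infty$ by contradiction. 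For (d), if $|\widehat{G}/N| < \infty$ then $N \notin \mathcal{P}$ (lest $\widehat{G} \in \mathcal{P}$), and $N/H \in \mathcal{P}$ then forces $H \notin \mathcal{P}$; deriving a contradiction will require iterating the argument (e.g., passing to a subgroup of $H$ of infinite--infinite index inside $N$ and tracking how $h(\cdot)$ descends) and/or exploiting that the core of $H$ in $\widehat{G}$ is trivial, so that $\widehat{G}$ embeds in $\mathrm{Sym}(\widehat{G}/H)$. Establishing primeness in step (a) and making the iteration in step (d) rigorous will be the most delicate aspects of the argument.
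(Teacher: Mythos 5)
Your construction of $\widehat{G}$ (Proposition~\ref{mincrim}, then Grigorchuk's trichotomy with the branch case excluded and the reduction to $t=1$) and your arguments for parts (i)--(vii) coincide with the paper's proof in all essentials; the paper quotes M.~Smith rather than Formanek for $Z(Q(k\widehat{G}))=k$, but the input $\Delta(\widehat{G})=\{1\}$ and the conclusion are identical. The genuine gaps are exactly the two steps of (viii) that you flag as delicate. In step (a) you propose to prove that $kN$ is prime by showing $\Delta^{+}(N)=\{1\}$, so that \cite[Proposition~6.3.11(ii)]{McCR} can be applied to the nonzero ideal $\mathfrak{h}kN$. But $N=N_{\widehat{G}}(H)$ is merely a subgroup of $\widehat{G}$, not a normal one, so just-infiniteness gives no purchase on $\Delta^{+}(N)$, and a bound on the orders of finite subgroups is perfectly compatible with $\Delta^{+}(N)$ being a nontrivial finite group; nothing in the hypotheses rules this out, and the paper makes no such claim. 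The paper's route avoids primeness of $kN$ altogether: since $\cchar k=0$, $kH$ is semiprime noetherian, hence semiprime Goldie, so every essential right ideal of $kH$ contains a regular element; one then shows that the augmentation ideal $A$ of $kH$ is essential (otherwise $A$ would be the annihilator of a copy of the trivial module, hence an annihilator prime, hence a minimal prime of $kH$ --- contradicting $BkH\subsetneq A$ with $BkH$ prime, where $B$ is the augmentation ideal of $k\Delta^{+}(H)$). The resulting regular element $c\in A$ remains regular in the free module $kN$, and then $\Kdim(kN/AkN)\leq\Kdim(kN/ckN)<\Kdim kN$ by \cite[Lemma~6.3.9]{McCR}. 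Without an argument of this kind the strict inequality displayed in your step (b) is unsupported.

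Step (d) is also left open in your write-up, but here you already hold all the pieces: if $|\widehat{G}:N|=t<\infty$ with transversal $g_1,\dots,g_t$, then $H_0:=\bigcap_{i}H^{g_i}$ is normal in $\widehat{G}$ (it is the core of $H$, since $H\triangleleft N$), the core $N_0$ of $N$ has finite index with each $N_0/(H^{g_i}\cap N_0)$ embedding in $N^{g_i}/H^{g_i}\cong N/H\in\mathcal{P}$, and hence $\widehat{G}/H_0\in\mathcal{P}$ and is infinite. Just-infiniteness then forces $H_0=\{1\}$ (as you observed via the trivial core), whence $\widehat{G}\in\mathcal{P}$, the desired contradiction. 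No iteration and no appeal to the permutation representation on $\widehat{G}/H$ is needed, and the latter does not by itself lead anywhere.
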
 

\begin{proof} We take $\widehat{G}$ to be a minimal criminal and, if necessary, replace it by the subgroup $L$ from Theorem~\ref{trichotomy}. 

 $(i), (iii)$ Use Proposition~\ref{mincrim} and Lemma~\ref{elamenMax}.

$(ii)$ This follows from $(b) \Longrightarrow (a)$ of Theorems~\ref{done} and \ref{pdone}.

$(iv)$ Clear.

$(v)$ This follows from the discussion before the theorem.

$(vi)$ By $(i)$ $k\widehat{G}$ is prime noetherian and so, by Goldie's Theorem, $\widehat{G}$ 
 has a simple artinian quotient ring $Q(k\widehat{G})\cong M_t(D)$ for some integer $t$ and division ring $D$. 
 Thus $\widehat{G} \subseteq GL_t(D)$.

Recall the definition of the FC-subgroup $\Delta(\widehat{G})$ from \S\ref{notation}. We claim that 
\begin{equation}\label{unit}\Delta (\widehat{G}) \;=\; \{1\}.
\end{equation} 
If not, then $\Delta(\widehat{G})$ is a non-trivial normal subgroup of $\widehat{G}$ and hence $|\widehat{G}: \Delta(\widehat{G})|<\infty$ since 
$\widehat{G}$ is just infinite. But $\Delta(\widehat{G})$ must satisfy ACC on subgroups, so $\Delta(\widehat{G}) \in \mathcal{P}$ by \cite[Lemma~4.1.5(iii)]{P}; whence $\widehat{G} \in \mathcal{P}$, a contradiction. Thus \eqref{unit} holds. 
It then follows from 
 \cite[Theorem~7.4]{Sm1} that the centre of $Q(k\widehat{G})$, and hence of $D$, equals $k$, as required.

$(vii)$ We can apply $(b)\Longrightarrow (a)$ of Theorem~\ref{done} to conclude that there is a bound on the orders of the finite subgroups of $\widehat{G}$. Suppose that $\widehat{G}$ is not simple, so, by $(v)$, $\widehat{G}$ is residually finite. By Zelmanov's solution of the restricted Burnside problem \cites{Z1, Z2}, a finitely generated, torsion, residually finite group of bounded
 exponent is finite. Since every subgroup of $G$ is finitely generated and residually finite, every torsion subgroup of $\widehat{G}$ is therefore finite.

$(viii)$ As $k\widehat{G}$ is a free $kH$-module, $kH$ is noetherian, and since $\cchar k=0 $, 
 \cite[Theorem~4.2.12]{P} implies that $kH$ is semiprime. 
 Hence, by Goldie's Theorem, $Q(kH)$ exists and is semisimple artinian. 
 In particular, by \cite[Proposition~2.3.5(ii)]{McCR} every essential right ideal of $kH$ contains a regular element. We claim that
\begin{equation}\label{essential} \textit{ the augmentation ideal } A \textit{ of } kH \textit{ is an essential right ideal of }kH.
\end{equation}
Since $kP/A \cong k$, if \eqref{essential} is false then there is a right ideal $I$ of $kH$ with $I \cong k$ as $kH$-modules. In particular, $IA = 0$ and
 $A$ is an annihilator prime ideal of $kH$. But, by \cite[Proposition~2.2.2(ii)]{McCR}, in a semiprime noetherian ring the only annihilator primes are the minimal primes. However the torsion $FC$-subgroup $\Delta^+ (H)$ of $H$ is finite since it is locally finite by 
\cite[Lemma~4.1.5]{P} and $H\in \mathrm{Max}$. Thus, if $B$ denotes the augmentation ideal of 
$k\Delta^+(H)$, then $BkH$ is a prime ideal of $kH$ by Lemma~\ref{FClem2} and \cite[Theorem~4.2.10]{P}. But $BkH \subsetneq A$ 
as $|H| = \infty$, so that $A$ is not a minimal prime. This contradiction proves \eqref{essential}. Hence $A$ contains a regular element $c$ of $kH$. 

For brevity denote $N_{\widehat{G}}(H)$ by $N$. By the freeness of $kN$ as a $kH$-module, $c$ is a regular element of $kN$, while $\Kdim kN \leq \Kdim k\widehat{G}$. Hence, by \cite[Lemma~6.3.9]{McCR}, and as right $kN$-modules,
\begin{equation}\label{less} n \; \geq \Kdim kN \; > \; \Kdim (kN/ckN)\; \geq \; \Kdim (kN/AkN).
\end{equation}
However $kN/AkN \cong k(N/H)$ both as right $kN$-modules and as rings, so that \eqref{less} shows that $\Kdim k(N/H) < n$. Therefore the 
induction hypothesis forces $N/H \in \mathcal{P}$ and then \eqref{less} together with \cite[Proposition~6.6.1]{McCR} show that $h(N/H) < n$.

Suppose finally for a contradiction that $|\widehat{G} : N| = t < \infty$, with (right) transversal $\{g_1, \ldots , g_t\}$. Since $H \triangleleft N$, $H_0 := \bigcap_{i=1}^t H^{g_i} \triangleleft \widehat{G}$, and it is easy to see that $\widehat{G}/H_0 \in \mathcal{P}$ and is infinite. This contradicts the facts that $\widehat{G}$ is just infinite with $\widehat{G} \notin \mathcal{P}$. 
 \end{proof}

\medskip

\begin{remarks}\label{final} $(i)$ The following observation connects back to $\S$\ref{group}. Suppose that $\cchar k = 0$. If the group 
 $\widehat{G}$ from Theorem~\ref{summary} is in class $(v)(a)$ of that result, then $Z(Q(k\widehat{G})) = k$ but the intersection of the 
 coartinian maximal ideals of $k\widehat{G}$ is 0. So $\{0\}$ is a rational ideal of $k\widehat{G}$ which is not locally closed, 
and the Dixmier-Moeglin equivalence fails for $k\widehat{G}$.

$(ii)$ With regard to Theorem~\ref{summary}$(vi)$ we note that $\widehat{G}$ cannot be linear over a field. For, if it were, 
then by the Tits alternative \cite{Tits}  it is either elementary amenable and thus in $\mathcal{P}$; or it contains a non-cyclic free subgroup, in which case $\widehat{G} \notin \mathrm{Max}$. Either way, it does not satisfy the hypotheses of the theorem.

$(iii)$ In \cite[Corollary~1.6]{EJ} it is shown that there exist finitely generated hereditarily just infinite torsion groups. Theorem~\ref{summary}$(vii)$ shows that, at least when $k$ has $\cchar 0$ and is big enough for $\widehat{G}$, such a group cannot occur as a subgroup of $\widehat{G}$. Moreover the Tits alternative shows that such a group cannot be linear over a field. (See also \cite[Question~15.18]{Ko}.)

$(iv)$ In \cite{JM} the first examples were presented of infinite finitely generated simple amenable groups. As noted at \cite[Lemmas~4.1, 4.2]{JM}, they contain infinite locally finite subgroups, and so certainly do not satisfy $\mathrm{Max}$. Thus they cannot be used for the group $\widehat{G}$ in Theorem~\ref{summary}.

$(v)$ In the first (1965) issue of the Kourovka Notebook, M. Kargapolov asked (Question~1.31) whether every residually finite group with the maximum condition on subgroups is in $\mathcal{P}$. According to the latest edition of the Notebook \cite{Ko}, this remains an open question. 
\end{remarks}

\medskip As a weaker form of Conjecture~\ref{Krullqn} we conjecture  that, in Theorem~\ref{summary}(v), $\widehat{G}$ will actually be simple. As
 a slight evidence in favour of  this conjecture we end the paper  by proving it in the first nontrivial case.  Observe that if $\Kdim kG=0$, then $kG$ is artinian and so  $|G|<\infty$  by  \cite[Theorem~10.1.1]{P},  whence  $G \in \mathcal{P}$.
Thus the first nontrivial case is when $\Kdim kG=1$.

\begin{corollary}\label{crikey} Suppose  that $\cchar \Bbbk = 0$ and that $\Bbbk$ contains primitive roots of unity of all orders. 
Let $G\not\in \mathcal{P}$ be a group satisfying  the hypotheses of Theorem~\ref{summary}, but  with  $\Kdim kG= 1$. Then the group $\widehat{G}$ constructed in that theorem satisfies the following properties.
\begin{enumerate}
\item[(1)] $\widehat{G}$ is   simple.
\item[(2)]  The only finite dimensional $\Bbbk\widehat{G}$-modules are the finite direct sums of the trivial module.
\item[(3)] Suppose that the field $\Bbbk$ is uncountable. Then the only proper nonzero ideal of $\Bbbk\widehat{G}$ is the augmentation ideal. 
\item[(4)] If $T$ is a subgroup of $\widehat{G}$ of infinite order then $|N_{\widehat{G}}(T)\, : \, T| < \infty$.

\end{enumerate}
\end{corollary}

\begin{proof}  (1)  Assume that this is false. By  Theorem~\ref{summary}(v),   $\widehat{G}$ is  then  hereditarily just infinite and hence also  residually finite.

  Suppose first   that $\widehat{G}$ is torsion. By Theorem~\ref{done}(b$)\Rightarrow(a)$ there is a bound on the orders of finite subgroups of $\widehat{G}$. By Zelmanov's solution of the restricted Burnside problem \cites{Z1,Z2},  $\widehat{G}$  is therefore finite. This  contradicts the fact that  $\widehat{G} \notin \mathcal{P}$. 

Hence there is an element $x\in \widehat{G}$ with $|x|=\infty$. Obviously $(x-1)$ is regular in $\Bbbk\langle x \rangle\cong \Bbbk[x,x^{-1}]$ and since $\Bbbk\widehat{G}$ is a free $\Bbbk\langle x \rangle$-module, $(x-1)$ remains regular in $\Bbbk\widehat{G}$.  
But  $\Bbbk \widehat{G}$ is prime noetherian and $\mathrm{Kdim}(\Bbbk \widehat{G}) = 1$. Thus if $I := (x - 1)\Bbbk \widehat{G}$  then  $V := \Bbbk \widehat{G}/I$ is a non-zero module of finite composition length. In particular there are only finitely many simple $\Bbbk \widehat{G}$-modules of finite $\Bbbk$-dimension appearing in the composition series of $V$, the sum of whose dimensions, counted with  multiplicities, is therefore finite, say equal to $M$. 

Now choose normal subgroups $\{H_i \, : \, i \in \mathbb{N} \}$ of finite index in $\widehat{G}$ with 
$ \bigcap_{i \in \mathbb{N}} H_i =\{1\}. 
$
For each $i \in \mathbb{N}$, let $\mathfrak{h}_i$ denote the augmentation ideal of $\Bbbk H_i$, and define algebras 
$$ S_i \; := \; \frac{ \Bbbk \langle H_i, x \rangle +  \mathfrak{h}_i \Bbbk \widehat{G} } { \mathfrak{h}_i \Bbbk \widehat{G} } 
\ \ \subseteq \ \  R_i \; := \; \frac{  \Bbbk \widehat{G}}{\mathfrak{h}_i \Bbbk \widehat{G}} \; \cong \; 
\Bbbk (\widehat{G}/H_i).$$
Note that 
$$ S_i  % \; &:= \;  (\Bbbk \langle H_i, x \rangle +  \mathfrak{h}_i \Bbbk \widehat{G}) \mathfrak{h}_i \Bbbk \widehat{G} 
 \ \cong  \  \Bbbk \langle H_i, x \rangle/(\mathfrak{h}_i \Bbbk \widehat{G} \cap \Bbbk \langle H_i, x \rangle) 
  \ = \; \Bbbk \langle H_i, x \rangle/\mathfrak{h}_i \Bbbk \langle H_i, x \rangle
\  \cong \ \Bbbk (\langle H_i, x \rangle /H_i).
$$
Consequently,   each $S_i$ is (isomorphic to) the group algebra of a finite cyclic subgroup of $\widehat{G}/H_i$ and hence is commutative, while   each $R_i$ is a free right $S_i$-module of finite rank, say $\mathrm{rank}_{S_i}(R_i) := t_i$. 

For each $i$, write  $ \mathfrak{d}_i $ for  the augmentation ideal of $\Bbbk \langle H_i, x \rangle$. 
Since $x - 1 \in \mathfrak{d}_i$ and $|\widehat{G} \, : \, H_i | < \infty$ it follows that
  $\Bbbk \widehat{G}/\mathfrak{d}_i \Bbbk \widehat{G}$ is a finite dimensional factor of $V$. Therefore
$ t_i \; \leq \; M $ for all $ i. $
This implies that each of the algebras $R_i$ satisfies a PI whose degree is bounded above by  $M$. % some fixed function of $M$. 

Since $ \bigcap_{i \in \mathbb{N}} H_i =\{1\} $ we have 
$\Bbbk \widehat{G} \; \subseteq \; \prod_{i \in \mathbb{N}} R_i,$  so that $\Bbbk \widehat{G}$ also satisfies a polynomial identity. 
  Thus $\widehat{G}$ is  abelian-by-finite by  \cite[Theorem~6.3.8]{P}  
 and hence $\widehat{G}\in \mathcal{P}$, a contradiction. This contradiction implies that $\widehat{G}$ is simple, as claimed.

$(2)$  Amenable groups do not contain free subgroups on 2 generators \cite[p.15-16]{Jus}.  Thus, if $\widehat{H}$ is linear, then the Tits alternative \cite{Tits} implies that $\widehat{G}$ is  a (finitely generated)  solvable-by-finite group and hence  $\widehat{G}\in\mathcal{P}$, a contradiction. Thus $\widehat{G}$  is not $\Bbbk$-linear. Since $\widetilde{G}$ is simple by (1), the only finite dimensional $\Bbbk\widehat{G}$-modules are therefore finite direct sums of the trivial module, as claimed.  

$(3)$ Let $P$ be a non-zero prime ideal of $\Bbbk \widehat{G}$. Since $\mathrm{Kdim}( \Bbbk \widehat{G}) = 1$, $\Bbbk \widehat{G}/P$ is artinian by  \cite[Proposition 3.15]{KL}, and hence is isomorphic to matrices over a division ring $D$ by the Artin-Wedderburn theorem. But since $\Bbbk$ is uncountable and $\widehat{G}$ is finitely generated, $\Bbbk \widehat{G}$ satisfies the Nullstellensatz by \cite[Corollary 9.1.8(i)]{McCR}. This forces $D$ to be finite-dimensional over $\Bbbk$, so $P$ must be the augmentation ideal $A$ of $\Bbbk \widehat{G}$ by part $(2)$. Finally, if $I$ is any proper nonzero ideal of $\Bbbk \widehat{G}$ then, as for $P$, $\Bbbk \widehat{G}/I$ is artinian, so $I$ must contain some power of the unique non-zero prime ideal $A$. But $A$ is idempotent by $(2)$, so $I = A$ as required. 
  
$(4)$ Let $T$ be an infinite subgroup of $\widehat{G}$ and set  $N := N_{\widehat{G}}(T)$. If $T = \widehat{G}$ there is nothing to prove, so assume that $T$ is a proper subgroup. In this case $|\widehat{G} \, : \, T| = \infty$ since otherwise $\widehat{G}$ would not be simple, contradicting (1). It now follows from Theorem~\ref{summary}(viii) that $N/T \in \mathcal{P}$ with 
$ h(N/T) \; < \; \mathrm{Kdim}(\Bbbk \widehat{G}) \; = \; 1.$
So $N/T$ is finite, as claimed. \end{proof}

  \medskip

\section*{Acknowledgement} We thank Nicolas Andruskiewitsch for helpful input.
 
\bigskip

\begin{bibdiv}
\begin{biblist}

\bib{AM}{article}{
author= {Andruskiewitsch, N.}, 
author={Mathieu, O.},
 title={Noetherian enveloping algebras of simple graded Lie algebras},
 journal={J. Math. Soc. Japan, Advance Publication}, volume={},
 date={May, 2025},
 pages={1-15} 
}

\bib{BGS}{article}{
 author= {Bartholdi, L.},
 author={Grigorchuk, R. I.},
 author={Sunik, Z.},
 title={Branch groups},
 journal={Handbook of Algebra },
 volume={3},
 date={2003},
 pages={989-1112}
}
 
 \bib{BK}{article}{
 author= {Bartholdi, L.},
 title={Amenability of groups is characterized by Myhill’s theorem. With an appendix by Dawid Kielak},
 journal={J. Eur. Math. Soc. },
 volume={21},
 date={2019},
 pages={3191-3197}
}

\bib{B}{article}{
 author= {Bass, H.},
 title={The degree of polynomial growth of finitely generated nilpotent groups},
 journal={Proc. London Math. Soc.},
 volume={25},
 date={1972},
 pages={603-614}
}

\bib{Be}{article}{
 author= {Bell, J.},
 title={On the importance of being primitive},
 journal={Rev. Colombiana Mat.},
 volume={53},
 date={2019},
 pages={87-112}
}

\bib{BL}{article}{
 author= {Bell, J.},
 author={Leung, W. H.},
 title={The Dixmier-Moeglin equivalence for cocommutative
Hopf algebras of finite Gelfand-Kirillov Dimension},
 journal={Alg. Rep. Theory },
 volume={17},
 date={2014},
 pages={1843-1852}
}

\bib{BLLM} {article}{
 author={Bell, J.},
 author={ Launois, S.},
 author={León Sánchez, O.},
 author={ Moosa, R.},
title={Poisson algebras via model theory and differential-algebraic geometry},
journal={J. Eur. Math. Soc.}, 
volume={19},
date={2017},
pages={ 2019–2049}}

\bib{BG}{book}{
 author={Brown, K. A.},
 author={Goodearl, K. R.},
 title={Lectures on Algebraic Quantum Groups},
 series={Advanced Courses in Mathematics CRM Barcelona},
 volume={},
 publisher={Birkhauser},
 date={2002},
 pages={348},
 isbn={},
 review={},
 doi={},
}

 \bib{BS}{article}{
 author= {Brown, K. A.},
 author={Stafford, J. T.},
 title={The prime spectrum of the Drinfeld double of the Jordan plane},
 journal={Contemp. Math.},
 volume={801},
 date={2024},
 pages={51-71}
}

 \bib{Bu}{article}{
 author= {Buzaglo, L.},
 title={Enveloping algebras of Krichever-Novikov algebras are not noetherian},
 journal={Alg. Rep. Theory},
 volume={26},
 date={2023},
 pages={2085-2111}
}

 \bib{Ch}{article}{
 author= {Chou, C.},
 title={Elementary amenable groups},
 journal={Illinois J.Math.},
 volume={24},
 date={1980},
 pages={396-407}
}

 \bib{CS}{article}{
 author= {Cutolo, G.},
 author={Smith, H.},
 title={A note on polycyclic residually finite$-p$ groups},
 journal={Glas. Math. J.},
 volume={52},
 date={2010},
 pages={137-143}
}

\bib{DW}{article}{
author={Derakhshan, J.},
author={Wagner, F. O.}, 
title={Skew linear unipotent groups},
journal={Bull. London Math. Soc.} ,
volume={ 38},
date={ 2006}, 
pages={447-449}
}

 \bib{D}{article}{
 author= {Dixmier, J.},
 title={Id$\acute{e}$aux primitifs dans les alg$\grave{e}$bres enveloppantes},
 journal={J. Algebra}, volume={48}, date={1977}, pages={96-112} }

\bib{EJ}{article}{
 author= {Ershov, M.},
 author= {Jaikin-Zapirain, A.},
 title={Groups of positive weighted deficiency and their applications},
 journal={J. Reine Angew. Math.},
 volume={677},
 date={2013},
 pages={71-134}
}

\bib{GZ}{article}{
 author= {Goodearl, K.R.},
 author= {Zhang, J.J.},
 title={Non-affine Hopf algebra domains of Gelfand-Kirillov dimension two},
 journal={Glasgow Math. J.},
 volume={59},
 date={2017},
 pages={563-593}
}

\bib{G3}{article}{
 author= {Grigorchuk, R. I.},
 title={Degrees of growth of finitely generated groups and the theory of invariant means},
 journal={Izv. Akad. Nauk SSSR Ser. Mat.},
 volume={48},
 date={1984},
 pages={939-985}
}

\bib{Gr}{article}{
 author= {Grigorchuk, R. I.},
 title={Degrees of growth of $p$-groups and torsion-free groups},
 journal={Mat. Sb.},
 volume={126},
 date={1985},
 pages={194-214}
}

\bib{GJ}{article}{
 author= {Grigorchuk, R. I.},
 title={Just infinite branch groups},
 journal={New Horizons in pro-$p$ groups, M .P. E. du Sautoy, D. Segal
and A. Shalev, eds},
 volume={},
 date={2000},
 pages={121-179}
}

\bib{GM}{article}{
 author= {Grigorchuk, R. I.},
 author= {Machi, A.},
 title={On a group of intermediate growth that acts on a line by homeomorphisms},
 journal={Mat. Zametki},
 volume={53},
 date={1993},
 pages={46-63}
}

\bib{G}{article}{
 author= {Gromov, M.},
 title={Groups of polynomial growth and expanding maps},
 journal={Pub. Math. IHES },
 volume={53},
 date={1981},
 pages={53-73}
}

\bib{G2}{article}{
 author= {Guba, V. S.},
 title={R. Thompson’s group $F$ and the amenability problem},
 journal={Russian Math. Surveys},
 volume={77},
 date={2022},
 pages={251-300}
}

\bib{Gu}{article}{
 author= {Guivarc'h, Y.},
 title={Croissance polynomiale et p$\acute{\textrm{e}}$riodes des fonctions harmoniques},
 journal={Bull. Soc. Math. France}, volume={101}, date={1973}, pages={333-379}}

\bib{H}{article}{
 author= {Hall, P.},
 title={Finiteness conditions for soluble groups},
 journal={Proc. London Math. Soc.},
 volume={4},
 date={1954},
 pages={419-436}
}

\bib{J}{article}{
 author= {Jategaonkar, A.},
 title={Ore domains and free algebras},
 journal={Bull. London Math. Soc.},
 volume={1},
 date={1969},
 pages={45-46}
}

\bib{Jus}{book}{
 author={Juschenko, K.},
 title={Amenability of Discrete Groups by Examples},
 series={Math. Surveys and Monographs},
 volume={266},
 edition={},
 publisher={Amer. Math. Soc.},
 date={2022},
 pages={165},
 isbn={},
 review={},
 doi={},
}

\bib{JM}{article}{
 author= {Juschenko, K.},
 author={Monod, N.},
 title={Cantor systems, piecewise translations and simple amenable groups},
 journal={Ann. of Math.},
 volume={178},
 date={2013},
 pages={775-787}
}

\bib{Ko}{book}{
 author={Khukhro, E. I.},
 author={Mazurov, V. D.},
 title={Unsolved Problems in Group Theory: the Kourovka Notebook},
 series={},
 volume={20},
 edition={arXiv1401.0300v37},
 publisher={Russian Academy of Science},
 date={2022},
 pages={},
 isbn={},
 review={},
 doi={},
}

\bib{KL}{book}{
 author={Krause, G.},
 author={Lenagan, T. H.},
 title={Growth of Algebras and Gelfand-Kirillov Dimension},
 series={Graduate Studies in Math.},
 volume={22},
 edition={Revised Edition},
 publisher={Amer. Math. Soc.},
 date={2000},
 pages={212},
 isbn={0-8218-0859-1},
 review={},
 doi={},
}

\bib{KLM}{article}{
 author= {Kropholler, P.},
 author={Linnell, P. A.},
 author={Moody, J. A.},
 title={Applications of a new K-theoretic theorem to soluble group rings},
 journal={Proc. Amer. Math. Soc},
 volume={104},
 date={1988},
 pages={675-684}
}
 
\bib{KrL}{article}{
 author= {Kropholler, P.},
 author={Lorensen, K.},
 title={Group-graded rings satisfying the strong rank
condition},
 journal={J.Algebra},
 volume={539},
 date={2019},
 pages={326-338}
}

 \bib{L}{article}{
 author= {Lorenz, M.},
 title={ Primitive ideals of group algebras of supersoluble groups},
 journal={Math. Ann.},
 volume={225},
 date={1977},
 pages={115-122}
}

\bib{Matczuk}{article}{
author={Matczuk, J.}, title={Goldie rank of Ore extensions},
 journal={Comm. in Algebra}, volume={23}, date={1995}, pages={1455-1471}}

\bib{McCR}{book}{
 author={McConnell, J. C.},
 author={Robson, J. C.},
 title={Noncommutative Noetherian Rings},
 series={},
 volume={},
 publisher={Wiley Interscience},
 date={1987},
 pages={596},
 isbn={},
 review={},
 doi={},
}

 \bib{M}{article}{
 author= {Moeglin, C.},
 title={Id$\acute{e}$aux primitifs des alg$\grave{e}$bres enveloppantes},
 journal={J. Math. Pures Appl.},
 volume={59},
 date={1980},
 pages={265-336}
}

\bib{Mo}{book}{ 
 author={Montgomery, S.},
title={Hopf algebras and their Actions on Rings},
 series={CBMS Regional Conference Series in Math.},
 volume={82},
 publisher={Amer. Math. Soc.},
 date={1993},
 pages={},
 isbn={},
 review={},
 doi={},
}

\bib{Ol01}{article}{
 author= {Ol'shanskii, A. Y.},
 title={An infinite simple torsion-free noetherian group},
 journal={Izvvestia Akad. Nauk.Ser. Mat.},
 volume={43},
 date={1979},
 pages={1328-1393}
}

\bib{Ol1}{article}{
 author= {Ol'shanskii, A. Y.},
 title={An infinite group with all subgroups of prime order},
 journal={Izvvestia Akad. Nauk.Ser. Mat.},
 volume={44},
 date={1980},
 pages={309-321}
}

\bib{Ols}{article}{
 author= {Ol'shanskii, A. Y.},
 title={On the question of the existence of an invariant mean on a group},
 journal={Uspekhi Mat. Nauk.},
 volume={35},
 date={1980},
 pages={199-200}
}

\bib{Ol}{book}{
 author={Ol'shanskii, A. Y.},
 title={The Geometry of Defining Relatioons in Groups},
 series={},
 volume={},
 publisher={Kluwer Academic Publishers Group},
 date={1991},
 pages={505},
 isbn={},
 review={},
 doi={},
}

%\bib{P2}{book}{
% author={Passman, D. S.},
% title={Infinite  Group Rings},
% publisher={Marcel Dekker},
% date={1971},
 %pages={149},
% isbn={},
 %review={},
% doi={},
%} 

\bib{P}{book}{
 author={Passman, D. S.},
 title={The Algebraic Structure of Group Rings},
 series={Pure and Applied Mathematics},
 volume={82},
 publisher={Wiley Interscience},
 date={1977},
 pages={720},
 isbn={},
 review={},
 doi={},
}

\bib{P3}{book}{
 author={Passman, D. S.},
 title={Infinite Crossed Products},
 series={Pure and Applied Mathematics},
 volume={135},
 publisher={Academic Press},
 date={1989},
 pages={468},
 isbn={},
 review={},
 doi={},
}

\bib{R}{article}{
 author= {Roseblade, J. E.},
 title={Prime ideals in group rings of polycyclic groups},
 journal={Proc. London Math. Soc.},
 volume={36},
 date={1978},
 pages={385-447}
}

\bib{SW}{article}{
 author= {Sierra, S. J.},
 author={Walton, C.},
 title={The universal enveloping algebra of the Witt algebra is not noetherian},
 journal={Adv. Math.},
 volume={262},
 date={2014},
 pages={239-260}
}

\bib{Sm1}{article}{ 
 author={Smith, M.},
 title={Group algebras},
journal={J. Algebra},
volume={18},
date={1971},
pages={477-499}
}

\bib{Sm}{article}{
 author= {Smith, P. F.},
 title={On the dimension of group rings},
 journal={Proc. London Math. Soc.},
 volume={25},
 date={1972},
 pages={288-302}
}

\bib{S}{article}{
 author= {Snider, R . L.},
 title={On the singular ideal of a group algebra},
 journal={Comm. Algebra},
 volume={4},
 date={1976},
 pages={1087-1089}
}

\bib{T}{article}{
 author= {Tamari, D.},
 title={A refined classification of semi-groups leading to generalised polynomial rings
with a generalized degree concept},
 journal={Proc. ICM (Amsterdam)},
 volume={3},
 date={1954},
 pages={439-440}
}

\bib{Ta}{article}{
 author= {Takeuchi, M.},
 title={A correspondence between Hopf ideals and sub-Hopf algebras},
 journal={Manuscripta Math.},
 volume={7},
 date={1972},
 pages={251-270}
}

\bib{Tits}{article}
{author={Tits,J.},
title={Free subgroups in linear groups},
journal={J.Algebra},
volume={20},
date={1972},
pages={250–270}
}

\bib{W}{article}{
 author= {Wilson, J. S.},
 title={Groups with every proper quotient finite},
 journal={Proc. Cam. Phil. Soc.},
 volume={69},
 date={1971},
 pages={373-391}
}

\bib{Z}{article}{
 author= {Zalesskii, A. E.},
 title={The irreducible representations of finitely generated nilpotent groups without torsion},
 journal={Mat. Zametki},
 volume={9},
 date={1971},
 pages={199-210}
}

\bib{Z1}{article}{
 author= {Zel'manov, E. I.},
 title={Solution of the restricted Burnside problem for groups of odd exponent},
 journal={Izv. Akad. Nauk SSSR Ser. Mat.},
 volume={54},
 date={1990},
 pages={42-59}
}

\bib{Z2}{article}{
 author= {Zel'manov, E. I.},
 title={Solution of the restricted Burnside problem for 2-groups},
 journal={Mat. Sb.},
 volume={182},
 date={1991},
 pages={568-592}
}

\end{biblist}
\end{bibdiv}
\end{document}